\theoremstyle{definition}
\theoremstyle{plain}
\newtheorem{cor}{Corollary}
\newtheorem{thm}{Theorem}
\newtheorem{lem}{Lemma}
\newtheorem{defn}{Definition}
\newtheorem{prop}{Proposition}
\newcommand\shorttitle{Diverging on average set}
\newcommand\authors{C. Lo, S. Herrero Vila}
\ifodd\value{page}
\authors
\shorttitle
\ifodd\value{page}
\theoremstyle{definition}
\newtheorem{rem}{Remark}
\newcommand{\R}{\mathbb{R}}
\title{\sc{Geodesic rays diverging on average on a pair of pants}}
\author{Sergio Herrero Vila and Cheikh Lo}
\date{ \small{}}
\begin{document}
\renewcommand{\proofname}{Proof}
\renewcommand{\abstractname}{Abstract}
\renewcommand{\refname}{Bibliography}
\maketitle
\begin{abstract} The aim of this paper is to characterize in terms of coding a set of limit points considered in \cite{felipe} by F. Riquelme and A. Velozo corresponding to geodesic rays which spend less time in any compact region of a pair of pants with one cusp. Moreover in this particular context we reprove that the Hausdorff dimension of this set is equal $ \frac{1}{2} $ by explicit calculus.
\end{abstract}
\textbf{Key  words :} geodesic rays, radial limit points, divergence on average, Hausdorff dimension.\\
AMS 2010 \textit{Mathematics Subject Classification.}\\
\textbf{Acknowledgment :} The authors would like to express their sincere thanks to Cirm and Aims-Mbour. They benefited from the Cirm-Aims Research in Residence project. The first author would also like to thank Irmar-University of Rennes 1 for its welcome and hospitality during its research stay in November 2023.

\section{Introduction}
Let $ \Sigma $ be a hyperbolic pair of pants with one cusp. By the uniformization theorem, such a surface is the quotient $ \Gamma\backslash \mathbb{D} $, where $ \mathbb{D} $ is the Poincare disc model and $ \Gamma $ is a discrete subgroup of $ Isom^{+}(\mathbb{D}) $ generated by a hyperbolic and a parabolic isometry. We compactify the model $ \mathbb{D} $ by adding a boundary at infinity $ \partial\mathbb{D}=\mathbb{S}^{1} $. The \textit{limit set} of a Fuchsian group $ \Gamma $ of $ \mathbb{D} $, denoted $ \Lambda $ is the subset of $ \partial\mathbb{D} $ defined for any $ z\in\mathbb{D} $ by $ \Lambda =\overline{\Gamma.z}\cap\partial\mathbb{D}$. The set $ \Lambda $ is the smallest closed $ \Gamma $-invariant subset of $ \partial\mathbb{D}=\mathbb{S}^{1} $.\\
This paper revolves around the study of points at infinity that are endpoints of geodesic rays that spend less time in any compact region of a pair of pants with one cusp. We evaluate the average time spent by a geodesic ray within a compact subset of the surface $\Sigma$. If this rate goes to zero, the endpoint $ \tilde{\sigma}(\infty)\in\partial\mathbb{D} $ of a lift $\tilde{\sigma}$ of $ \sigma $ is called a \textit{limit point diverging on average}. Formally
\begin{defn}\label{defpointaverage}
An element $ \xi $ of the boundary at infinity $ \partial\mathbb{D}=\mathbb{S}^{1} $ is said to be a \textbf{diverging on average limit point} if it is a limit point and for any geodesic ray $ \sigma:[0,\infty)\rightarrow\Sigma $ such that the endpoint of one (and thus every) of its lifts $ \tilde{\sigma}(\infty)=\xi $ and for any compact set $ W $ in $ \Sigma $ we have
	\begin{equation}\label{eq_divergencia}
	\lim_{T\rightarrow+\infty}\frac{1}{T}\int_{0}^{T}\chi_{W}(\sigma(t))dt=0.
	\end{equation}
	where $ \chi_{W} $ is the indicator function of $ W $. The set of diverging on average limit points is called \textbf{diverging on average set} and it is denoted $ \Lambda_{\infty} $.
\end{defn}

The study of such a set of points at infinity or their corresponding geodesic is part of a long tradition of works. Very early planetary motion interested the pioneers of dynamical systems theory. They studied the asymptotic behavior of special trajectories and related them to number theory. For more information about this tradition of study we recommend \cite{dani} and \cite{weiss}.\\
This work starts from a one of F. Riquelme and A. Velozo, \cite{felipe}, and was intended to be an explicit example of the latter. Naturally we started by adopting their techniques in our specific case. We quickly realized that it would be better by computing explicitly the Hausdorff dimension of the set of average limit points. This was possible thanks to a coding. of this set established in the earlier section. This manuscript is designed to be as self-contained as possible, offering an explicit characterization of the encoding of the divergent on average limit points of a Schottky group generated by a hyperbolic and a parabolic isometry. In that we set out the following.
\begin{thm}\label{average}
Let $ \Gamma $ be a Schottky group generated by a hyperbolic isometry $ h $ and a parabolic isometry $ p $. Then $ \xi\in\Lambda_{\infty} $ if only if for any positive integer $ N $ there exists a coding in terms $ \omega_{i} $ and $ p^{r_{i}} $ such that
$$ \lim_{q\rightarrow+\infty}\frac{\sum_{i=0}^{q}d(0,\omega_{i}(0))}{\sum_{i=1}^{q}2\ln|r_{i}|}=0 $$ 
where $ d $ is the hyperbolic distance on $ \mathbb{D} $ and for every $ i\in[1,q]\;\;\mbox{and}\;\;k\in [1,s_{i}]$ 
$$|r_{i}|\geq N\;\;\mbox{and}\;\;\omega_{i}=h^{j_{1}}p^{l_{1}}h^{j_{2}}p^{l_{2}}...
p^{l_{s_{i}-1}}h^{j_{s_{i}}}\;\;\mbox{with}\;\;|l_{k}|<N\;\;\mbox{and}\;\;
j_{k}\in\mathbb{Z}^{\ast}. $$
\end{thm}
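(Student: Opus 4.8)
The plan is to set up a dictionary between the reduced--word coding of a radial limit point and the cusp--excursion structure of its geodesic ray, and then to read the integral condition \eqref{eq_divergencia} off that dictionary. From the coding (the one referred to above) I would use: $\xi$ is encoded by an infinite reduced word $h^{a_{1}}p^{b_{1}}h^{a_{2}}p^{b_{2}}\cdots$ in $\langle h,p\rangle$; a lift $\tilde\sigma$ of $\sigma$ shadows the partial products $g_{[1,m]}(0):=g_{1}\cdots g_{m}(0)$ built from the syllables $g_{j}$, staying within a uniform distance of each and meeting them in order, so that the arclength at which $\tilde\sigma$ is closest to $g_{[1,m]}(0)$ equals $d(0,g_{[1,m]}(0))+O(1)$. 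To this I would add three facts, routine in the upper half--plane model: (i) $d(0,p^{r}(0))=2\ln|r|+O(1)$ as $|r|\to\infty$; (ii) the piece of $\sigma$ shadowing $p^{r}$ penetrates the cusp to depth $\ln|r|+O(1)$, so, writing $W_{M}:=\Sigma\setminus\mathcal H_{M}$ for the complement of the horoball of depth $\ln M$, this piece lies outside $W_{M}$ on a sub--arc of length $2\ln|r|-2\ln M+O(1)$ when $|r|\ge M$ and wholly inside $W_{M}$ when $|r|<M$; (iii) the piece shadowing an $h$--syllable stays at bounded depth. (The only limit points not carrying such a coding are the parabolic fixed points; these lie in $\Lambda_{\infty}$ trivially, since $\sigma$ then stays in the cusp, and I set them aside.)

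Fix a positive integer $N$ and regroup the word of $\xi$ by cutting at precisely the parabolic syllables of exponent $\ge N$ in modulus; calling these $p^{r_{1}},p^{r_{2}},\dots$ and letting $\omega_{i}$ be the block strictly between $p^{r_{i}}$ and $p^{r_{i+1}}$, with $\omega_{0}$ the initial block, each $\omega_{i}$ is forced to have the form $h^{j_{1}}p^{l_{1}}\cdots h^{j_{s_{i}}}$ with $j_{k}\in\mathbb Z^{\ast}$ and $|l_{k}|<N$, which is exactly the coding in the statement, and it is the unique one. If $\xi\in\Lambda_{\infty}$, then infinitely many syllables exceed $N$ in modulus --- otherwise $\sigma$ eventually makes only excursions of bounded depth and \eqref{eq_divergencia} fails for a deep enough $W_{M}$ --- so there are infinitely many blocks. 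Put $D_{q}=\sum_{i=0}^{q}d(0,\omega_{i}(0))$ and $P_{q}=\sum_{i=1}^{q}2\ln|r_{i}|$, let $T_{q}$ be the arclength at which $\tilde\sigma$ finishes shadowing $\omega_{q}$, and set $I_{q}(W)=\int_{0}^{T_{q}}\chi_{W}(\sigma(t))\,dt$. Facts (i)--(iii) give $T_{q}=D_{q}+P_{q}+O(q)$, the equality $I_{q}(W_{N})=D_{q}+2q\ln N+O(q)$ (the blocks, which lie in $W_{N}$, together with the ``collars'' of the long excursions), and the bound $I_{q}(W_{M})\le D_{q}+2q\ln M+O(q)$ for any fixed $M$. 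Since every nontrivial block moves $0$ by at least the systole of $\Gamma$ at $0$, one has $D_{q}\ge c_{1}q$, and $|r_{i}|\ge N$ gives $P_{q}\ge 2q\ln N$; hence the $O(q)$ terms are lower order against $D_{q}+P_{q}$, and everything reduces to the ratio $R_{q}:=D_{q}/P_{q}$.

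For $\Rightarrow$: assume $\xi\in\Lambda_{\infty}$ and fix $N$; applying \eqref{eq_divergencia} to the compact set $W_{N}$, from $I_{q}(W_{N})\ge D_{q}-o(D_{q}+P_{q})$ and $T_{q}=(1+o(1))(D_{q}+P_{q})$ one gets $D_{q}/(D_{q}+P_{q})\to 0$, whence $R_{q}\to 0$, the asserted coding condition. For $\Leftarrow$: let $W$ be any compact set and fix $M_{0}$ with $W\subset W_{M_{0}}$; for each $N$ use the given $N$--coding, so that from $T_{q}\ge\tfrac12(D_{q}+P_{q})$, $I_{q}(W_{M_{0}})\le D_{q}+2q\ln M_{0}+O(q)$ and $P_{q}\ge 2q\ln N$ one obtains $I_{q}(W_{M_{0}})/T_{q}\le 2R_{q}+C_{M_{0}}/\ln N$, and then, interpolating for $T\in[T_{q},T_{q+1}]$ --- the function $T\mapsto\tfrac1T\int_{0}^{T}\chi_{W_{M_{0}}}(\sigma(t))\,dt$ is decreasing while $\sigma$ lies in the cusp, so on such an interval its maximum is attained at an endpoint up to $o(1)$ --- one gets $\limsup_{T\to\infty}\tfrac1T\int_{0}^{T}\chi_{W}(\sigma(t))\,dt\le C_{M_{0}}/\ln N$. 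Since the left side does not involve $N$, letting $N\to\infty$ gives \eqref{eq_divergencia}.

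The main obstacle is making every $O(1)$ in the dictionary uniform: the shadowing estimate and facts (i)--(iii) must hold with constants depending only on $h,p$ and the horoball --- in particular independent of the exponents $a_{j},b_{j}$, which may be arbitrarily large, and small relative to the systole of $\Gamma$ at $0$. This is what keeps the accumulated error at $O(q)$, one bounded amount per block, and hence --- together with $D_{q}\ge c_{1}q$ and $P_{q}\ge 2q\ln N$ --- lower order, so that the quantifier ``for any positive integer $N$'' is doing genuine work rather than being cosmetic. A further point needing care is that one must probe with the whole family $W_{M}=\Sigma\setminus\mathcal H_{M}$ of horoball complements, matching the horoball depth to the relevant excursion scale (depth $\ln N$ for the $N$--coding), rather than with a single compact set; otherwise the short parabolic excursions inside the blocks $\omega_{i}$ would be miscounted and one would recover only the ``hyperbolic part'' of $D_{q}$, which is strictly weaker than $R_{q}\to 0$.
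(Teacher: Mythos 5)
Your proposal is correct and rests on the same dictionary as the paper's proof: regroup the word of $\xi$ by cutting at the parabolic syllables of modulus at least $N$, convert word data into arclength along the ray by a quasi-additivity estimate with uniform constants, use $d(0,p^{r}(0))=2\ln|r|+O(1)$, and compare the time spent in the compact part ($\approx\sum_i d(0,\omega_i(0))$) with the total elapsed time ($\approx\sum_i d(0,\omega_i(0))+\sum_i 2\ln|r_i|$). The differences are in packaging and in two points where you are more explicit than the paper. First, where you invoke uniform shadowing of the partial products, the paper obtains the same quasi-additivity from an explicit triangle estimate (Lemma \ref{tid1} and Corollary \ref{tid}), which is what produces the $2qC$ error in (\ref{lengthloop}); the content is the same, and the uniformity issues you flag (constants independent of the exponents, plus the $O(1)$ wasted per short parabolic syllable when estimating the time spent in $W_N$ inside a block) are present at exactly the same level of rigour in (\ref{lengthloop})--(\ref{lengthcomp}). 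Second, and more substantively, you verify the Birkhoff average of (\ref{eq_divergencia}) at \emph{all} times rather than only along the distinguished subsequence: your grouping convention (the numerator includes the block following the last counted deep excursion, which is the natural reading of the $\sum_{i=0}^{q}$ in the statement) together with the monotonicity/interpolation argument on $[T_q,T_{q+1}]$ controls the local maxima of $T\mapsto\frac1T\int_0^T\chi_W(\sigma(t))\,dt$, which occur at the ends of the compact blocks; the paper computes the ratio only at the times attached to $\gamma_q=\omega_1p^{r_1}\cdots\omega_qp^{r_q}$ and with numerator $\sum_{i=1}^{q}$, leaving that reduction implicit, and your version is the one for which the ``if'' direction is robust when the blocks $\omega_i$ have unbounded length. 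Similarly, your handling of the quantifier over compact sets in the converse direction (probing with the horoball complements $W_M$, getting a bound of the form $2R_q+C_{M_0}/\ln N$, and letting $N\to\infty$ at the end) makes explicit what the paper encodes by tying each compact $W_k$ to its own level $N$ through the exhaustion $N(\Sigma)=W_k\cup\mathcal{C}_{p^{+}}(k)$. The interior maxima inside a block (after a short parabolic dip the average may peak before $T_{q+1}$) deserve one extra line beyond your ``up to $o(1)$'': bounding such a value by $I(T_{q+1})$ divided by the time at which the block begins, which is at least the total excursion time $\approx P_{q+1}-O(q)$, gives the same bound $R_{q+1}+C_{M_0}/\ln N$, so this is a presentational point rather than a gap.
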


Moreover, we provide a detailed computation of the Hausdorff dimension of this subset of boundary points, illustrating that it exemplifies a subset of the circumference with a Hausdorff dimension of $\frac{1}{2}$.

\begin{thm}\label{hausd}
The Hausdorff dimension of the set of limit points diverging on average is equal to the critical exponent of the parabolic subgroup of $ \Gamma $. Precisely
	$$ dim_{H}(\Lambda_{\infty})=\frac{1}{2}.$$
\end{thm}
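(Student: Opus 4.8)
The plan is to prove the two inequalities $\dim_H(\Lambda_\infty)\le\tfrac12$ and $\dim_H(\Lambda_\infty)\ge\tfrac12$ separately, using in both cases the coding of Theorem~\ref{average} together with two geometric inputs for the Schottky group $\Gamma$ coming from the coding established above: the comparison $\operatorname{diam}[g_1\cdots g_m]\asymp e^{-d(0,\,g_1\cdots g_m(0))}$ for the arc $[g_1\cdots g_m]\subset\mathbb S^1$ of boundary points whose coding begins with the reduced word $g_1\cdots g_m$, and the parabolic distance formula $d(0,p^{r}(0))=2\ln|r|+O(1)$, so that a letter $p^{r}$ contributes a factor $\asymp|r|^{-2s}$ to $\operatorname{diam}^{s}$. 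The value $\tfrac12$ will appear precisely because $\sum_{r}|r|^{-2s}$ converges if and only if $s>\tfrac12$ --- which is exactly the critical exponent $\delta_{\langle p\rangle}$ of the parabolic subgroup, $d(0,p^{n}(0))$ growing like $2\ln|n|$.

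For the upper bound, fix $s>\tfrac12$ and auxiliary exponents $\tfrac12<s''<s'<s$. Pick an integer $N$ large (large enough to absorb the $O(q)$ additive defects of the word metric, below) and apply Theorem~\ref{average} with this $N$: every $\xi\in\Lambda_\infty$ then has a threshold-$N$ coding $\omega_1p^{r_1}\omega_2p^{r_2}\cdots$ with $\sum_{i=1}^{q}d(0,\omega_i(0))\le\varepsilon\sum_{i=1}^{q}2\ln|r_i|$ for all $q$ past some point, $\varepsilon>0$ being a constant we shall fix later. A routine reduction --- split off a bounded-length prefix (only finitely many occur), use that $\Gamma$ acts on $\mathbb S^1$ by bi-Lipschitz maps and hence preserves Hausdorff dimension --- lets us work with the set $F$ of codings for which the inequality holds for every $q\ge1$. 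Cover $F$ by its stage-$q$ cylinders; by the $\operatorname{diam}$-comparison, the parabolic formula, and the constraint rewritten as $(s-s')\sum 2\ln|r_i|\ge\frac{s-s'}{\varepsilon}\sum d(0,\omega_i(0))$ to transfer weight onto the connecting words, one obtains $\sum_{\text{stage }q}\operatorname{diam}^{s}\le C\,(AB)^{q}$ with $A=\sum_{\omega}e^{-c_\varepsilon d(0,\omega(0))}$, $c_\varepsilon=s+\tfrac{s-s'}{\varepsilon}$ (up to a harmless bounded shift coming from the defects), and $B=\sum_{|r|\ge N}|r|^{-2s''}$. Here $B<\infty$ because $2s''>1$, while $A\le\sum_{g\in\Gamma}e^{-c_\varepsilon d(0,g(0))}$ is finite as soon as $c_\varepsilon>\delta_\Gamma$ and tends to $0$ as $c_\varepsilon\to\infty$; so choosing $\varepsilon$ small forces $AB<1$. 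Then $(AB)^{q}\to0$, which both sends $\sum_{\text{stage }q}\operatorname{diam}^{s}\to0$ and forces the mesh of the stage-$q$ cover to $0$ (for large $q$ no stage-$q$ cylinder can have diameter $>\eta$, since the number of such is $<C(AB)^{q}/\eta^{s}<1$). Hence $\mathcal H^{s}(F)=0$, so $\dim_H(\Lambda_\infty)\le s$ for every $s>\tfrac12$.

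For the lower bound, choose a rapidly growing sequence $N_1<N_2<\cdots\to\infty$ (say $N_j=2^{2^{j}}$) and let $K$ be the set of $\xi$ with coding $hp^{r_1}hp^{r_2}\cdots$ where each $r_i$ ranges freely in $\{N_i,\dots,2N_i\}$. Every such $\xi$ lies in $\Lambda_\infty$: given $N$, only finitely many $r_i$ are $<N$, so after absorbing a bounded prefix into a single $\omega_1$ all later blocks have $|r_i|\ge N$ and $\omega_i=h$, and since $r_i\to\infty$ the Stolz--Ces\`aro theorem gives $\big(\sum d(0,\omega_i(0))\big)/\big(\sum 2\ln|r_i|\big)\to0$. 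Endow $K$ with the probability measure $\mu$ distributing mass equally among the $\asymp N_i$ children at each level. The cylinders $[\,\cdots hp^{r}\,]$ with $r$ in a block $\{N_i,\dots,2N_i\}$ have diameters and mutual gaps all of order $r^{-2}\asymp N_i^{-2}$ relative to their parent, so $K$ is a Moran-type Cantor set with comparable children and comparable gaps at each level, and $\mu(C)\le|C|^{s}$ holds on stage-$q$ cylinders for any $s<\tfrac12$ (because $(2s-1)\sum_{i\le q}\ln N_i\to-\infty$). The mass distribution principle then yields
\[
\dim_H(K)\ \ge\ \liminf_{q\to\infty}\ \frac{\sum_{i=1}^{q}\ln N_i}{\,2\sum_{i=1}^{q}\ln N_i+O(q)\,}\ =\ \frac12,
\]
the $O(q)$ defect being negligible since $\ln N_i\to\infty$. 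This is the hyperbolic counterpart of Good's theorem that the reals whose continued fraction partial quotients tend to infinity have Hausdorff dimension $\tfrac12$. Combining the two bounds, and recalling $\delta_{\langle p\rangle}=\tfrac12$, we conclude $\dim_H(\Lambda_\infty)=\tfrac12$.

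I expect the main obstacle to be the upper bound --- concretely, the passage from the statement of Theorem~\ref{average} ("for every $N$ there exists a coding with vanishing ratio") to a single set with a fixed threshold and a fixed $\varepsilon$ on which the inequality holds uniformly in $q$, and the verification that the constraint can be redistributed so that the series over the connecting words $\omega$ is evaluated at an exponent $c_\varepsilon$ genuinely exceeding the critical exponent $\delta_\Gamma$ of the whole group (which is only known a priori to satisfy $\delta_\Gamma<1$); this is what forces $\varepsilon$ to depend on $s$. The accompanying bookkeeping --- controlling the $O(q)$ additive defects of the word metric by taking $N$ large, and ensuring the distortion estimate for the cylinders of the infinite-alphabet coding is uniform in the word length --- is routine once these are in place.
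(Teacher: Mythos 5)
Your upper bound is essentially sound and runs on the same mechanism as the paper's: cover $\Lambda_\infty\cap\Lambda_{ur}$ by stage-$q$ cylinders of the coding $\omega_1p^{r_1}\cdots\omega_qp^{r_q}$, use $d(0,p^r(0))=2\ln|r|+O(1)$ so the parabolic blocks contribute a series convergent exactly when $s>\tfrac12$, and use the ratio condition of Theorem \ref{average} to push the exponent on the $\omega$-blocks above $\delta_\Gamma$; the passage from ``for every $N$ there is a coding with vanishing ratio'' to fixed sets is handled, as you indicate, by countable stability of the dimension (a countable union over thresholds $q_0$, not a finite one, but that is harmless). The genuine gap is in your lower bound. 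The set $K$ built from $hp^{r_1}hp^{r_2}\cdots$ with $r_i\in\{N_i,\dots,2N_i\}$ and $N_j=2^{2^j}$ is \emph{not} a Moran set with children spread over their parent: the cylinders $[hp^{r}]$, $N_{q+1}\le r\le 2N_{q+1}$, have diameters and mutual gaps $\asymp N_{q+1}^{-2}$ as you say, but they all lie within distance $\asymp N_{q+1}^{-1}$ of the point $h(p^{+})$ (the parabolic clustering: $p^r(D(h)\cup D(h^{-1}))$ sits at distance $\asymp 1/r$ from $p^{+}$). Hence the entire mass of a level-$q$ cylinder of diameter $D_q$ is carried by a sub-arc of diameter $\asymp D_qN_{q+1}^{-1}$, and a ball of that radius violates the Frostman condition unless $s\bigl(2\sum_{i\le q}\ln N_i+\ln N_{q+1}\bigr)\le\sum_{i\le q}\ln N_i+O(q)$. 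Checking $\mu(C)\le|C|^{s}$ on cylinders only, as in your parenthetical, does not suffice; the correct bound is $\liminf_q\frac{\sum_{i\le q}\ln N_i}{2\sum_{i\le q}\ln N_i+\ln N_{q+1}+O(q)}$, and with $N_j=2^{2^j}$ this equals $\tfrac13$, not $\tfrac12$. Worse, covering each level-$q$ cylinder by the single arc of radius $\asymp D_qN_{q+1}^{-1}$ containing all its children shows $\dim_H K\le\tfrac13$ for that choice, so the displayed conclusion $\dim_H K\ge\tfrac12$ is false: your situation is the analogue of \L uczak's theorem on doubly exponentially large partial quotients, not of Good's theorem.

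The fix is easy but necessary: take $N_i\to\infty$ slowly enough that $\ln N_{q+1}=o\bigl(\sum_{i\le q}\ln N_i\bigr)$ (e.g.\ $N_i=i+1$ or $N_i=2^{i}$; membership in $\Lambda_\infty$ still holds since then $\sum_{i\le q}d(0,\omega_i(0))\asymp q$ while $\sum_{i\le q}2\ln r_i\gtrsim q\ln q$), and carry out the mass-distribution estimate taking the clustering near $h(p^{+})$ into account; this recovers the bound $\tfrac12-\varepsilon$ and is then a legitimate alternative to the paper's route, which instead proves the lower bound by a Bishop--Jones type tree construction: one restricts to a sub-alphabet, uses the divergence of $\sum_\gamma e^{-(\frac12-\varepsilon)d(0,\gamma(0))}$ over the relevant sub-collection to distribute a measure along the tree, and concludes with Frostman's lemma.
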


The following is organized into three sections. The first one contains the background and statements of the results of this paper. The second section aims to study the diverging on average set and establishes a condition in terms of coding. The last section gives an interpretation in terms of measure theory. Precisely we calculate the Hausdorff dimension of the diverging on average set.
\section{Background and statements of results}

A geodesic of $ \mathbb{D} $ is either a diameter of $ \mathbb{D} $ or semicircle orthogonal to unit circle $ \mathbb{S}^{1} $. We call \textit{horocycle} of $ \mathbb{D} $ a Euclidean circle tangent to $ \mathbb{S}^{1} $ at a complex number $ z\in\mathbb{S}^{1} $. We write that curve $ \mathcal{H}_{z} $. Its corresponding disc is called \textit{horodisc centered at} $ z $ and denoted  $ \mathcal{HD}_{z} $.\\ 
Let $ g$ be an element of $ Isom^{+}(\mathbb{D}) $ and $ z_{0}\in\mathbb{D} $ such that $ g(z_{0})\neq z_{0} $. We denote by $ C(g) $ the bisector of the hyperbolic segment $ [z_{0},g(z_{0})] $ and $ D(g) $ the half-plane of $ \mathbb{D} $ delimited by $ C(g) $ containing $ g(z_{0}) $ and put 
$ E(g)=\mathbb{D}\setminus\mathring{D}(g) $. We have $ g(D(g^{-1}))=E(g) $. According to the relative position of $ C(g) $ and $ C(g^{-1}) $ we have the following classification :
\begin{itemize}
	\item $ g $ is called \textit{hyperbolic} if $ C(g)\cap C(g^{-1})=\emptyset $. In this case the only fixed points of $ g $ are the endpoints of the commune perpendicular to  $ C(g) $ and $ C(g^{-1}) $.
	\item $ g $ is called \textit{parabolic} if $ C(g)\cap C(g^{-1})=\{\xi\}\subset\partial\mathbb{D} $. In this case $ \xi $ is the unique fixed point of $ g $.
	\item $ g $ is called \textit{elliptic} if $ C(g)\cap C(g^{-1})=\{z\}\subset\mathbb{D} $. In this case $ z $ is the unique fixed point of $ g $ in $ \mathbb{D} $.
\end{itemize} 
As announced at the introduction of this paper, we consider a Fuchsian group $ \Gamma $ generated by a hyperbolic isometry denoted $ h $ and a parabolic isometry denoted $ p $ of $ \mathbb{D} $ such that 
$ \overline{D(h)\cup D(h^{-1})}\cap\overline{D(p)\cup D(p^{-1})}=\emptyset $ and $ \Delta=\bigcap_{g\in\mathcal{A}_{1}}E(g) $, where $ \mathcal{A}_{1}=\{h,h^{-1},p,p^{-1}\} $, is the Dirichlet domain centered at $ 0 $ and so a Ford domain, see figure \ref{dom}. 

%\begin{figure}[htbp]
%	\centering
%	\begin{normalsize} 
%		\import{figs/}{dibujo1.pdf_tex}
%	\end{normalsize}
%	\caption{Fundamental domain}
%	\label{fig:test}
%\end{figure}

\begin{figure}[htbp]
	\centering
	\begin{normalsize} 
		\def\svgwidth{\textwidth}
		%\def\svgwidth{22cm}
		%\def\svgscale{0.5}
		%% Creator: Inkscape 1.3.2 (091e20e, 2023-11-25, custom), www.inkscape.org
%% PDF/EPS/PS + LaTeX output extension by Johan Engelen, 2010
%% Accompanies image file 'dibujo1.pdf' (pdf, eps, ps)
%%
%% To include the image in your LaTeX document, write
%%   \input{<filename>.pdf_tex}
%%  instead of
%%   \includegraphics{<filename>.pdf}
%% To scale the image, write
%%   \def\svgwidth{<desired width>}
%%   \input{<filename>.pdf_tex}
%%  instead of
%%   \includegraphics[width=<desired width>]{<filename>.pdf}
%%
%% Images with a different path to the parent latex file can
%% be accessed with the `import' package (which may need to be
%% installed) using
%%   \usepackage{import}
%% in the preamble, and then including the image with
%%   \import{<path to file>}{<filename>.pdf_tex}
%% Alternatively, one can specify
%%   \graphicspath{{<path to file>/}}
%% 
%% For more information, please see info/svg-inkscape on CTAN:
%%   http://tug.ctan.org/tex-archive/info/svg-inkscape
%%
\begingroup%
  \makeatletter%
  \providecommand\color[2][]{%
    \errmessage{(Inkscape) Color is used for the text in Inkscape, but the package 'color.sty' is not loaded}%
    \renewcommand\color[2][]{}%
  }%
  \providecommand\transparent[1]{%
    \errmessage{(Inkscape) Transparency is used (non-zero) for the text in Inkscape, but the package 'transparent.sty' is not loaded}%
    \renewcommand\transparent[1]{}%
  }%
  \providecommand\rotatebox[2]{#2}%
  \newcommand*\fsize{\dimexpr\f@size pt\relax}%
  \newcommand*\lineheight[1]{\fontsize{\fsize}{#1\fsize}\selectfont}%
  \ifx\svgwidth\undefined%
    \setlength{\unitlength}{1995bp}%
    \ifx\svgscale\undefined%
      \relax%
    \else%
      \setlength{\unitlength}{\unitlength * \real{\svgscale}}%
    \fi%
  \else%
    \setlength{\unitlength}{\svgwidth}%
  \fi%
  \global\let\svgwidth\undefined%
  \global\let\svgscale\undefined%
  \makeatother%
  \begin{picture}(1,0.58395987)%
    \lineheight{1}%
    \setlength\tabcolsep{0pt}%
    \put(0,0){\includegraphics[width=\unitlength,page=1]{dibujo1.pdf}}%
    \put(0.61320755,0.47220369){\color[rgb]{0.05490196,0.05490196,0.05490196}\makebox(0,0)[lt]{\lineheight{1.25}\smash{\begin{tabular}[t]{l}$p^+$\end{tabular}}}}%
    \put(0.50290275,0.130404){\color[rgb]{0.05490196,0.05490196,0.05490196}\makebox(0,0)[lt]{\lineheight{1.25}\smash{\begin{tabular}[t]{l}$h^+$\end{tabular}}}}%
    \put(0.33309143,0.20007017){\color[rgb]{0.05490196,0.05490196,0.05490196}\makebox(0,0)[lt]{\lineheight{1.25}\smash{\begin{tabular}[t]{l}$h^-$\end{tabular}}}}%
  \end{picture}%
\endgroup%

	\end{normalsize}
	\caption{Dirichlet (Ford) domain of $ \Gamma $ centered at $ 0 $.}
	\label{dom}
\end{figure}

%\begin{figure}[hbtp]
%	\centering
%	\def\svgwidth{\textwidth}
%	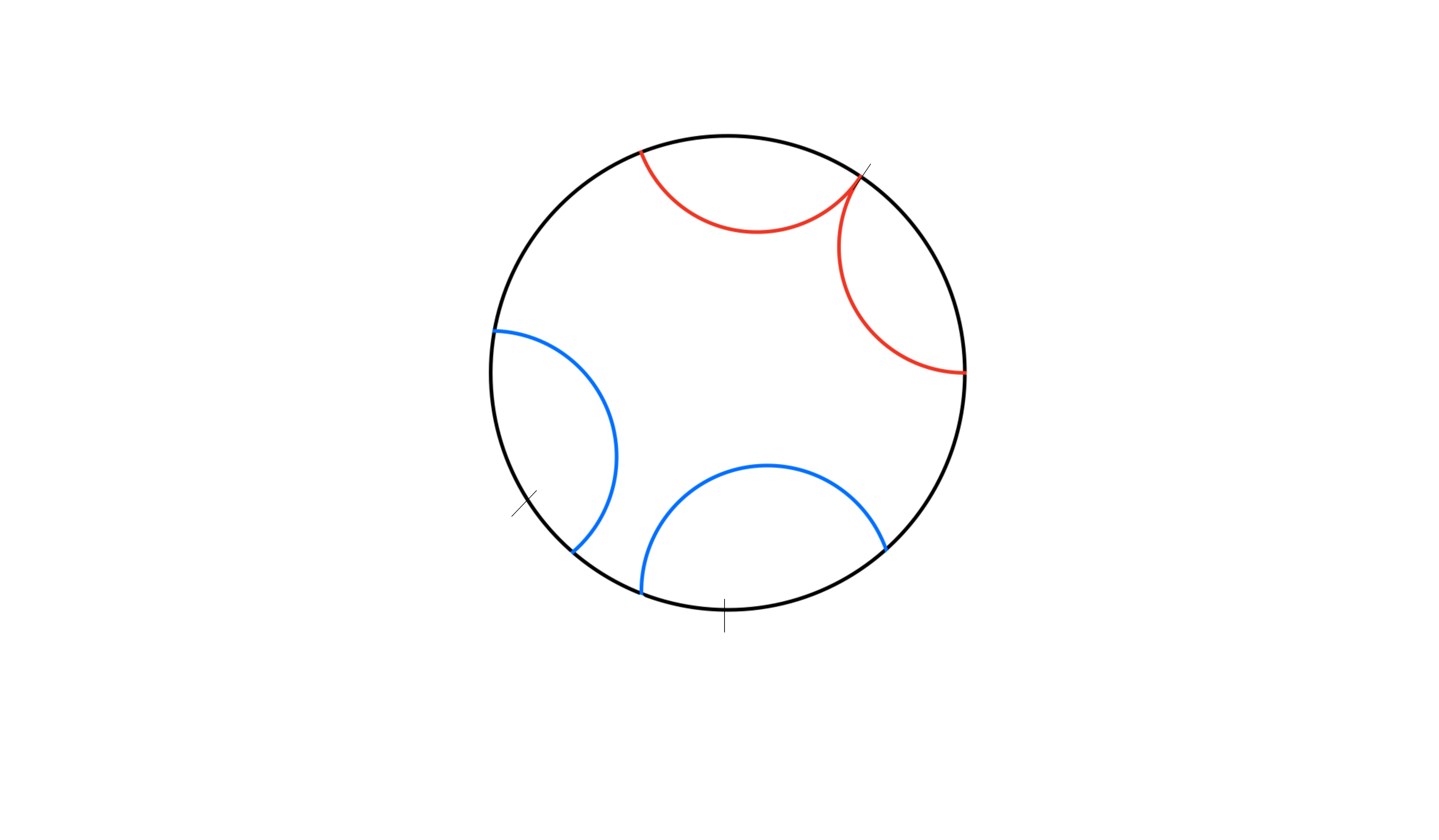
%	\hfill\caption{dibujo1}
%\end{figure}

The group $ \Gamma=<h,p> $ is a \textit{Schottky group of rank two}. One can think $\Sigma=\Gamma\backslash\mathbb{D}$ as a compact surface $ \Sigma_{0} $ on which one \textit{cusp} and two \textit{funnels}. For convenience of computation often we resort to another model of the hyperbolic plane, the Poincare upper half-plane, $ \mathbb{H} $. This model will be related to the disc model by an isometry $ \varphi:\mathbb{D}\rightarrow\mathbb{H} $ such that the parabolic element $ p $ of $ \mathbb{D} $ is conjugate by $ \varphi $ to the element $ P:z\mapsto z+1 $ of $ PSL(2,\mathbb{R}) $, meaning $ P=\varphi\circ p\circ\varphi^{-1} $. A geodesic of $ \mathbb{H} $ is either an Euclidean semicircle centered on the real axis or a line perpendicular to the real axis. We call \textit{horocycle} of $ \mathbb{H} $ a Euclidean circle tangent at a real $ x $ to the real axis or a horizontal line of $ \mathbb{H} $. They are written respectively $ \mathcal{H}_{x} $ and $ \mathcal{H}_{\infty} $ and called \textit{horocycle centered at $ x $} or $ \infty $. A \textit{horodisc} is a Euclidean disc or a upper-region of $ \mathbb{H} $ delimited by a horocycle. They are respectively written $ \mathcal{HD}_{x} $ and $ \mathcal{HD}_{\infty} $.\\
The limit set $ \Lambda $ of $ \Gamma $ is divided into two disjoint subsets. A point $ \xi $ of $ \Lambda $ where there exists a sub-orbit of $ \Gamma.z $ that converges to $ \xi $ within a bounded hyperbolic distance from a geodesic ray ending at $ \xi $ is called a \textit{radial limit point} and the set of thus points is denoted $ \Lambda_{r} $. Every fix-point of a hyperbolic isometry is a radial limit point. The fix-point of a parabolic isometry is a limit point of $ \Gamma $ called \textit{parabolic limit point}. We denote by $ \Lambda_{p} $ the set of parabolic limit points of $ \Gamma $. Hence we have the disjoint union $ \Lambda=\Lambda_{r}\sqcup\Lambda_{p} $.
\begin{rem}
By the definitions of the sets $ \Lambda_{r} $ and $ \Lambda_{\infty} $ and the disjointedness $ \Lambda=\Lambda_{r}\sqcup\Lambda_{p} $ if $ \xi\in\Lambda_{\infty} $, so is $ \gamma.\xi $ for all $ \gamma\in\Gamma $ and clearly $ \Lambda_{p}\subset\Lambda_{\infty} $. Therefore, we are right to be interested by the following questions : Which radial limit points are in $ \Lambda_{\infty} $ ? Which is their characterization and encoding ? Is there an interpretation of the set $ \Lambda_{\infty}$ in terms of measure theory since it belongs to the limit set $\Lambda $?
\end{rem}

\section{The diverging on average set}
\subsection{Behavior of geodesic rays in the surface}
For a Fuchsian group $ \Gamma $ the convex hull of $ \Lambda $, written $ \widetilde{N}(\Lambda) $ and called often the \textit{Nielsen region}, is the minimal convex subset of $ \mathbb{D} $ that contains all geodesics with endpoints in $ \Lambda $. The \textit{convex core} $ N(\Sigma) $ of the associated hyperbolic surface $ \Sigma $ is the quotient $ \Gamma\backslash\widetilde{N}(\Lambda) $. The convex core is the smallest sub-surface of $ \Sigma $ such that the inclusion map is a homotopy equivalence. Every parabolic fix-point $ x $ corresponds on the hyperbolic surface $ \Sigma $ to a cusp, denoted $ \mathcal{C}_{x} $. In the case of this paper, the surface $\Sigma=\Gamma\backslash\mathbb{D}$, where $ \Gamma=<h,p> $, comprises two funnels, a unique cusp denoted $ \mathcal{C}_{p^{+}} $, where $p^{+} $ is the fix-point of the parabolic element $ p $ and the compact core $ \Sigma_{0}=N(\Sigma)\setminus\mathcal{C}_{p^{+}} $.
%\begin{figure}[htbp]
%\begin{center}
%\includegraphics[scale=0.35]{dom.pdf}\hspace{1cm}
%\caption{Fundamental domain}\label{dom}
%\label{canal}
%\end{center}
%\end{figure}
The cusp $ \mathcal{C}_{p^{+}} $ has a cusp neighborhood filled by embedded horocycles winding around it. The fix-point $p^{+}\in\mathbb{S}^{1} $ corresponds, in the upper-half plane model $ \mathbb{H} $, to the point $ \infty $. We call a \textit{a cusp of level $ k\in\mathbb{N}^{\ast} $}, denoted $ \mathcal{C}_{\infty}(k) $, the projection $ \pi:\mathbb{H}\rightarrow\Sigma=\Gamma\backslash\mathbb{H} $ of the horodisc $\mathcal{HD}_{\infty}(k)=\{z\in\mathbb{H}: Im(z)>k \} $. In the case of the disc model, a cusp of level $ k $ is the set $ \mathcal{C}_{p^{+}}(k)=\pi\circ\varphi^{-1}(\mathcal{HD}_{\infty}(k)) $. For every compact $ W \subset N(\Sigma)$ there exists a natural integer $ k $ such that $ W\subset N(\Sigma)\setminus\mathcal{C}_{p^{+}}(k) $, see Figure \ref{dibujo12}

\begin{figure}[htbp]
	\centering
	\begin{normalsize} 
		\def\svgwidth{\textwidth}
		%\def\svgwidth{13cm}
		%\def\svgscale{0.5}
		%% Creator: Inkscape 1.3.2 (091e20e, 2023-11-25, custom), www.inkscape.org
%% PDF/EPS/PS + LaTeX output extension by Johan Engelen, 2010
%% Accompanies image file 'dibujo12.pdf' (pdf, eps, ps)
%%
%% To include the image in your LaTeX document, write
%%   \input{<filename>.pdf_tex}
%%  instead of
%%   \includegraphics{<filename>.pdf}
%% To scale the image, write
%%   \def\svgwidth{<desired width>}
%%   \input{<filename>.pdf_tex}
%%  instead of
%%   \includegraphics[width=<desired width>]{<filename>.pdf}
%%
%% Images with a different path to the parent latex file can
%% be accessed with the `import' package (which may need to be
%% installed) using
%%   \usepackage{import}
%% in the preamble, and then including the image with
%%   \import{<path to file>}{<filename>.pdf_tex}
%% Alternatively, one can specify
%%   \graphicspath{{<path to file>/}}
%% 
%% For more information, please see info/svg-inkscape on CTAN:
%%   http://tug.ctan.org/tex-archive/info/svg-inkscape
%%
\begingroup%
  \makeatletter%
  \providecommand\color[2][]{%
    \errmessage{(Inkscape) Color is used for the text in Inkscape, but the package 'color.sty' is not loaded}%
    \renewcommand\color[2][]{}%
  }%
  \providecommand\transparent[1]{%
    \errmessage{(Inkscape) Transparency is used (non-zero) for the text in Inkscape, but the package 'transparent.sty' is not loaded}%
    \renewcommand\transparent[1]{}%
  }%
  \providecommand\rotatebox[2]{#2}%
  \newcommand*\fsize{\dimexpr\f@size pt\relax}%
  \newcommand*\lineheight[1]{\fontsize{\fsize}{#1\fsize}\selectfont}%
  \ifx\svgwidth\undefined%
    \setlength{\unitlength}{957bp}%
    \ifx\svgscale\undefined%
      \relax%
    \else%
      \setlength{\unitlength}{\unitlength * \real{\svgscale}}%
    \fi%
  \else%
    \setlength{\unitlength}{\svgwidth}%
  \fi%
  \global\let\svgwidth\undefined%
  \global\let\svgscale\undefined%
  \makeatother%
  \begin{picture}(1,0.60423197)%
    \lineheight{1}%
    \setlength\tabcolsep{0pt}%
    \put(0,0){\includegraphics[width=\unitlength,page=1]{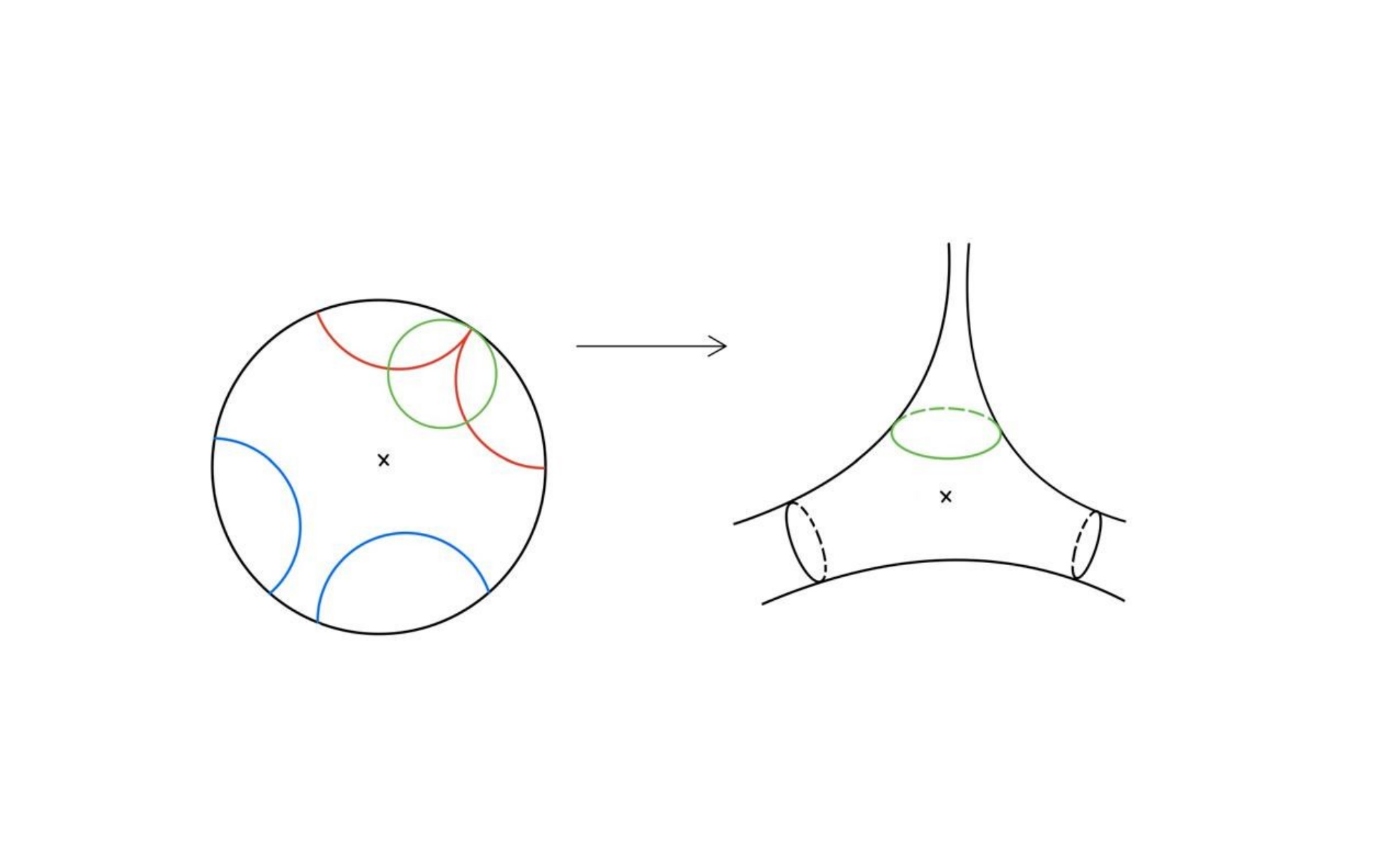}}%
    \put(0.28997248,0.26347753){\color[rgb]{0.05490196,0.05490196,0.05490196}\makebox(0,0)[lt]{\lineheight{1.25}\smash{\begin{tabular}[t]{l}$0$\end{tabular}}}}%
    \put(0.6888686,0.23771854){\color[rgb]{0.05490196,0.05490196,0.05490196}\makebox(0,0)[lt]{\lineheight{1.25}\smash{\begin{tabular}[t]{l}$\pi(0)$\end{tabular}}}}%
    \put(0.7249312,0.29512426){\color[rgb]{0.05490196,0.05490196,0.05490196}\makebox(0,0)[lt]{\lineheight{1.25}\smash{\begin{tabular}[t]{l}$C_{p^+}(k)$\end{tabular}}}}%
    \put(0.45777381,0.36504149){\color[rgb]{0.05490196,0.05490196,0.05490196}\makebox(0,0)[lt]{\lineheight{1.25}\smash{\begin{tabular}[t]{l}$\pi$\end{tabular}}}}%
  \end{picture}%
\endgroup%

	\end{normalsize}
	\caption{The surface $\Sigma_0$.}
	\label{dibujo12}
\end{figure}

\begin{rem}
	Note that it is not restrictive to assume the compact subsets of $ \Sigma $ in the definition \ref{defpointaverage} to be inside the Nielsen region, as, in order to verify if an endpoint of a ray $(\sigma(t))$ is diverging on average, we just to check equation (\ref{eq_divergencia}) for compact sets $W\subset N(\Sigma)$, because, for any compact set $W\subset\Sigma,$ we can see that
	\begin{align*}
		&\lim_{T\rightarrow+\infty}\frac{1}{T}\int_{0}^{T}\chi_{W}(\sigma(t))dt
		=\lim_{T\rightarrow+\infty}\frac{1}{T}\int_{0}^{T}(\chi_{W\cap N(\Sigma)}(\sigma(t))+\chi_{W\cap (S\backslash N(\Sigma))}(\sigma(t)))dt\\
		&=\lim_{T\rightarrow+\infty}\frac{1}{T}\int_{0}^{T}\chi_{W\cap N(\Sigma)}(\sigma(t))dt+ 
		\lim_{T\rightarrow+\infty}\frac{1}{T}\int_{0}^{T}\chi_{W\cap\Sigma\backslash N(\Sigma)}(\sigma(t))dt\\
		&=\lim_{T\rightarrow+\infty}\frac{1}{T}\int_{0}^{T}\chi_{W\cap N(\Sigma)}(\sigma(t))dt,
	\end{align*}
	where the last equality follows from the fact that if a ray exits the Nielsen region $N(\Sigma)$, it is going to diverge to infinity without coming back to $N(\Sigma)$.  	
\end{rem}

\begin{prop}
Let $ \tilde{\sigma} $ be a lift of a geodesic ray $ \sigma $ in the surface $ \Sigma $. If $ \tilde{\sigma}(\infty)\notin\Gamma.p^{+} $ then there is an unbounded sequence of positive reals $ (t_{n}) $ such that $ \sigma(t_{n})\in B(\pi(0),R) $, where $ B(\pi(0),R) $ is a fixed ball of $N(\Sigma)\setminus\mathcal{C}_{p^{+}} $.
\end{prop}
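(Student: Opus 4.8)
\emph{Proof idea.} The plan is to combine the thick--thin description of $\Sigma$ recalled above with one elementary fact: a forward geodesic ray of the hyperbolic plane that stays inside a fixed horoball for all large time must end at the centre of that horoball. Write $\xi:=\tilde{\sigma}(\infty)$. I will first reduce to the case $\xi\in\Lambda$ — this is the only case in which the conclusion can hold, since if $\xi\notin\Lambda$ the ray eventually leaves $N(\Sigma)$ and, by the remark preceding the proposition, diverges through a funnel without returning. Granting $\xi\in\Lambda$, and using that there is a single cusp so that $\Lambda_{p}=\Gamma.p^{+}$, the hypothesis $\xi\notin\Gamma.p^{+}$ together with $\Lambda=\Lambda_{r}\sqcup\Lambda_{p}$ says exactly that $\xi\in\Lambda_{r}$. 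Using the same remark once more, after the ray has entered $N(\Sigma)$ it never leaves again; truncating $\sigma$ accordingly (which changes neither the hypothesis nor the conclusion) I may assume $\sigma([0,\infty))\subset N(\Sigma)$.

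Next I would fix, once and for all, a level $k\in\mathbb{N}^{\ast}$ for which the cusp neighbourhood $\mathcal{C}_{p^{+}}(k)$ is embedded and $\Sigma_{0}=N(\Sigma)\setminus\mathcal{C}_{p^{+}}(k)$ is the compact core, exactly as set up above. Since $\Sigma_{0}$ is compact, the radius $R:=1+\sup_{x\in\Sigma_{0}}d_{\Sigma}(x,\pi(0))$ is finite and depends only on $\Gamma$, and $\Sigma_{0}\subset B(\pi(0),R)$. Because $N(\Sigma)=\Sigma_{0}\sqcup\mathcal{C}_{p^{+}}(k)$, it then suffices to show that $\sigma$ does not eventually remain inside $\mathcal{C}_{p^{+}}(k)$: if so, there are arbitrarily large $t$ with $\sigma(t)\in\Sigma_{0}\subset B(\pi(0),R)$, and choosing an unbounded such sequence $(t_{n})$ finishes the argument.

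For the remaining point I would argue by contradiction: suppose $\sigma([T,\infty))\subset\mathcal{C}_{p^{+}}(k)$ for some $T$, enlarging $T$ so that $\sigma((T,\infty))$ lies in the interior of the cusp. Since $\mathcal{C}_{p^{+}}(k)$ is embedded, its preimage under $\pi$ is a disjoint union of open horoballs, namely the $\Gamma$-translates of $\varphi^{-1}(\mathcal{HD}_{\infty}(k))$; being connected, $\tilde{\sigma}((T,\infty))$ must lie inside a single one of them, say $g\,\varphi^{-1}(\mathcal{HD}_{\infty}(k))$ with $g\in\Gamma$. Pushing to the upper half-plane via $\varphi$ and applying $g^{-1}$ produces a forward geodesic ray of $\mathbb{H}$ that stays in $\mathcal{HD}_{\infty}(k)=\{\operatorname{Im}(z)>k\}$ for all sufficiently large $t$; since a ray supported on a Euclidean semicircle has imaginary part tending to $0$ at its endpoint, that ray must be a vertical half-line and hence have endpoint $\infty$. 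Undoing $g^{-1}$ and $\varphi$ (and using that $\varphi^{-1}(\infty)=p^{+}$), the endpoint of $\tilde{\sigma}$ lies in $\Gamma\cdot p^{+}$, contradicting the hypothesis. The real content sits here, yet the step I expect to be most delicate is not this model computation but the two structural reductions it rests on: that the ray may be assumed to lie in $N(\Sigma)$, so the funnels play no role, and that a cusp neighbourhood of suitable level is embedded, so that $\pi^{-1}(\mathcal{C}_{p^{+}}(k))$ is an honestly disjoint union of horoballs. One could instead extract a weaker form of the statement directly from the definition of a radial limit point, but then the return radius would a priori depend on $\xi$ rather than being a constant of $\Gamma$.
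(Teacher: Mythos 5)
Your argument is correct, but it follows a genuinely different route from the paper's. The paper's proof is two lines: since $\tilde{\sigma}(\infty)\notin\Gamma.p^{+}$ and (implicitly) $\tilde{\sigma}(\infty)\in\Lambda$, the disjointness $\Lambda=\Lambda_{r}\sqcup\Lambda_{p}$ forces $\tilde{\sigma}(\infty)\in\Lambda_{r}$; the definition of a radial limit point then directly yields a compact $K\subset\Sigma$ and an unbounded sequence $(t_{n})$ with $\sigma(t_{n})\in K$, and one simply takes $R$ large enough that $B(\pi(0),R)\supset K$. You instead run the thick--thin argument: fix an embedded cusp neighbourhood $\mathcal{C}_{p^{+}}(k)$ with compact core $\Sigma_{0}$, rule out the ray being eventually trapped in the cusp by lifting to a single horoball and observing that a geodesic ray confined to $\{\operatorname{Im}(z)>k\}$ must end at $\infty$, hence at a point of $\Gamma.p^{+}$, and conclude that the ray returns to $\Sigma_{0}\subset B(\pi(0),R)$ at arbitrarily large times. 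What your route buys is precisely what you point out at the end: a radius $R$ depending only on $\Gamma$ (the paper's $R$ depends on the compact $K$, hence on the ray), and a self-contained geometric mechanism that is reused later in the paper (cusp excursions, levels $k$); what it costs is the structural input (embedded cusp neighbourhood, $\pi^{-1}(\mathcal{C}_{p^{+}}(k))$ a disjoint union of horoballs, compactness of the core) plus one small patch: your truncation step assumes the ray enters $N(\Sigma)$, but a ray ending at a limit point may stay outside the Nielsen region forever, merely asymptotic to a boundary geodesic of the convex hull; since its distance to $N(\Sigma)$ then tends to $0$, the argument survives by enlarging $R$ by a fixed amount (or by working in a small neighbourhood of $\Sigma_{0}$). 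Note also that both you and the paper read the hypothesis as implicitly requiring $\tilde{\sigma}(\infty)\in\Lambda$; your explicit remark that the conclusion fails otherwise is a fair observation about the statement rather than a defect of either proof.
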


\begin{proof}
	The geodesic $\tilde\sigma(t)$ is directed to a limit point which is not parabolic, as $\tilde\sigma(\infty)\notin \Gamma.p^+$. We then deduce that $\tilde\sigma(\infty)$ is radial limit point, which means that there exist a compact $K\subset\Sigma$ and an unbounded sequence $(t_n)$ such that  $\sigma(t_n)\in K$ for all $n$. We now consider a ball in the surface centered in $\pi(0)$ with a radius $R$ big enough so that $B(\pi(0),R)\supset K$.
	
\end{proof}

Consequently for any geodesic ray $ \sigma $ such that $ \tilde{\sigma}(\infty)\in\Lambda_{r} $ there exists $ R>0 $ such that $ \sigma $ returns infinitely many times in the ball $ B(\pi(0),R) $. A geodesic ray $ \sigma $ such that $ \tilde{\sigma}(\infty)\in\Lambda_{p} $ exits any compact region of $ \Sigma $ and enters in the cusp without backtrack.

\subsection{Cusp crossing time}\label{cuscros}
Let $ \sigma=\pi([0,\xi)) $, where $ \xi\in\Lambda $, be a geodesic ray which crosses a cusp.
In the half-plane Poincare model, we compute the length of $ \sigma $ in  $ \mathcal{C}_{\infty}(k) $. For every $ z\in\mathcal{H}_{\infty}(k) $, $$ d(z,p^n(z))=2\log{\left(\frac{n}{2Imz}+ \sqrt{\frac{n^2}{4Im^{2}z}-1}\right)}\sim 2\log\Big(\frac{n}{k}\Big), $$ where $ \sim $ means the two functions are equivalent. This distance exists iff $ n\geqslant 2k $. If it is the case we say that $ \sigma $ \textit{winds around the cusp} $ \mathcal{C}_{\infty}(k) $ and the power $ n $ of the parabolic isometry $ p $ is called \textit{a winding number} and denoted $ \omega(\sigma,k) $ or simply $ \omega(\sigma) $ if there is not confusion. An $ (n,k) $-excursion is a geodesic arc $ \sigma' $ of a geodesic ray $ \sigma $ lying on $ \mathcal{C}_{\infty}(k) $ with endpoints on the boundary horocycle $ \pi(\mathcal{H}_{\infty}(k)) $ and with winding number $ n $ around the cusp $ \mathcal{C}_{\infty}(k) $. Someone is interested in the sum of the lengths of all excursions along the geodesic ray up to a given time. One has the following and for a proof we refer to \cite{basm}-Lemma 4.1 or \cite{Vo}-Corollary 2.2.
%\begin{figure}[htbp]
%\begin{center}
%\includegraphics[scale=0.35]{excursion1.pdf}\hspace{1cm}
%\caption{A lift of an (n,k)-excursion of a geodesic ray on $ \mathbb{H} $}\label{excur1}
%\label{canal}
%\end{center}
%\end{figure}
%\begin{figure}[htbp]

\begin{figure}[htbp]
	\centering
	\begin{normalsize} 
		\def\svgwidth{\textwidth}
		%\def\svgwidth{7cm}
		%\def\svgscale{0.5}
		%% Creator: Inkscape 1.3.2 (091e20e, 2023-11-25, custom), www.inkscape.org
%% PDF/EPS/PS + LaTeX output extension by Johan Engelen, 2010
%% Accompanies image file 'dibujo2.pdf' (pdf, eps, ps)
%%
%% To include the image in your LaTeX document, write
%%   \input{<filename>.pdf_tex}
%%  instead of
%%   \includegraphics{<filename>.pdf}
%% To scale the image, write
%%   \def\svgwidth{<desired width>}
%%   \input{<filename>.pdf_tex}
%%  instead of
%%   \includegraphics[width=<desired width>]{<filename>.pdf}
%%
%% Images with a different path to the parent latex file can
%% be accessed with the `import' package (which may need to be
%% installed) using
%%   \usepackage{import}
%% in the preamble, and then including the image with
%%   \import{<path to file>}{<filename>.pdf_tex}
%% Alternatively, one can specify
%%   \graphicspath{{<path to file>/}}
%% 
%% For more information, please see info/svg-inkscape on CTAN:
%%   http://tug.ctan.org/tex-archive/info/svg-inkscape
%%
\begingroup%
  \makeatletter%
  \providecommand\color[2][]{%
    \errmessage{(Inkscape) Color is used for the text in Inkscape, but the package 'color.sty' is not loaded}%
    \renewcommand\color[2][]{}%
  }%
  \providecommand\transparent[1]{%
    \errmessage{(Inkscape) Transparency is used (non-zero) for the text in Inkscape, but the package 'transparent.sty' is not loaded}%
    \renewcommand\transparent[1]{}%
  }%
  \providecommand\rotatebox[2]{#2}%
  \newcommand*\fsize{\dimexpr\f@size pt\relax}%
  \newcommand*\lineheight[1]{\fontsize{\fsize}{#1\fsize}\selectfont}%
  \ifx\svgwidth\undefined%
    \setlength{\unitlength}{1995bp}%
    \ifx\svgscale\undefined%
      \relax%
    \else%
      \setlength{\unitlength}{\unitlength * \real{\svgscale}}%
    \fi%
  \else%
    \setlength{\unitlength}{\svgwidth}%
  \fi%
  \global\let\svgwidth\undefined%
  \global\let\svgscale\undefined%
  \makeatother%
  \begin{picture}(1,0.61904765)%
    \lineheight{1}%
    \setlength\tabcolsep{0pt}%
    \put(0,0){\includegraphics[width=\unitlength,page=1]{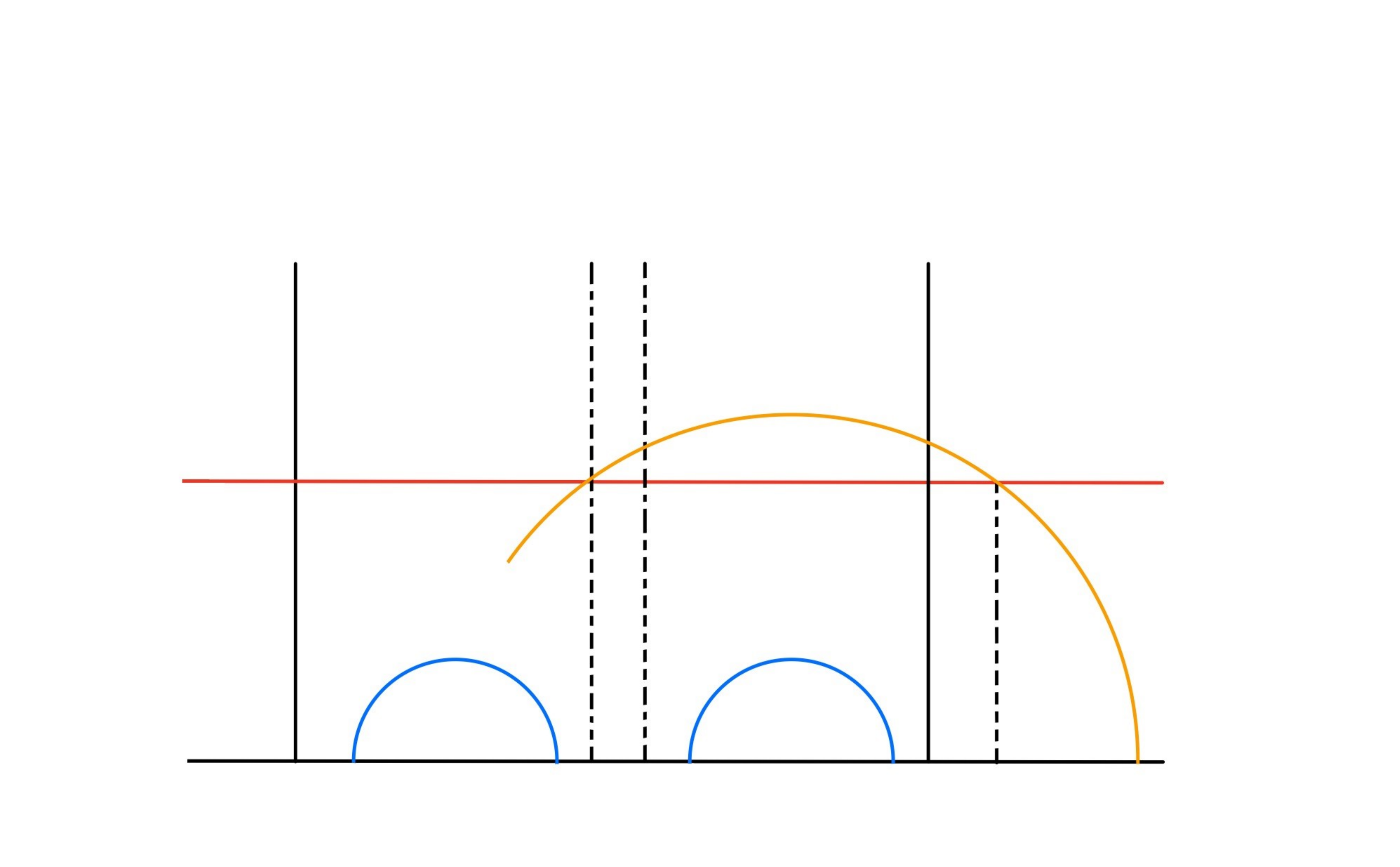}}%
    \put(0.81132188,0.04674907){\color[rgb]{0.05490196,0.05490196,0.05490196}\makebox(0,0)[lt]{\lineheight{1.25}\smash{\begin{tabular}[t]{l}$\xi$\end{tabular}}}}%
    \put(0.84751472,0.2722){\color[rgb]{0.05490196,0.05490196,0.05490196}\makebox(0,0)[lt]{\lineheight{1.25}\smash{\begin{tabular}[t]{l}$\partial \mathcal{HD}_\infty(k)$\end{tabular}}}}%
    \put(0.42300332,0.32045706){\color[rgb]{0.05490196,0.05490196,0.05490196}\makebox(0,0)[lt]{\lineheight{1.25}\smash{\begin{tabular}[t]{l}$\overset{(n)}{\ldots}$\end{tabular}}}}%
  \end{picture}%
\endgroup%

	\end{normalsize}
	\caption{A lift of an $(n,k)$-excursion of a geodesic ray on $\mathbb{H}$}
	\label{excur1}
\end{figure}

%\begin{figure}[htbp]
%	\centering
%	\begin{normalsize} 
%		\def\svgwidth{\textwidth}
		%\def\svgwidth{7cm}
		%\def\svgscale{0.5}
%		\input{{figs/}dibujo3.pdf_tex}
%	\end{normalsize}
%	\caption{A lift of an $(n,k)$-excursion of a geodesic ray on $\mathbb{D}$}
%	\label{excur2}
%\end{figure}

%\begin{center}
%\includegraphics[scale=0.35]{excursion2.pdf}\hspace{1cm}
%\caption{A lift of an (n,k)-excursion of a geodesic ray on $ \mathbb{D} $}\label{excur2}
%\label{canal}
%\end{center}
%\end{figure} 

\begin{prop}[\textbf{Corollary 2.2 in \cite{Vo}}] \label{smalength}
Let $ \sigma' $ be an $ (n,k) $-excursion of a geodesic ray $ \sigma $ into the cusp $ \mathcal{C}_{p^{+}}(k) $ with horocycle boundary of length $ L $. We have 
$$ \arg\sinh(\frac{L}{2}\omega(\sigma))\leqslant\frac{1}{2}l(\sigma')<
\arg\sinh(\frac{L}{2}\omega(\sigma)+1). $$	
\end{prop}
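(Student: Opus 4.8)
\noindent The plan is to work entirely in the upper half-plane model $\mathbb{H}$ afforded by the isometry $\varphi$, in which the cusp sits at $\infty$, the parabolic generator acts as $P:z\mapsto z+1$, and the cusp of level $k$ is the projection of the horoball $\mathcal{HD}_\infty(k)=\{\,Im z>k\,\}$. The first thing I would record is the normalisation linking the two quantities in the statement: since levels are positive integers, $k\ge 1$, and the closed horocycle $\pi(\mathcal{H}_\infty(k))$ has length $L=\int_0^1\frac{dx}{k}=\frac1k\le 1$, so $k=\frac1L$. A lift of the excursion $\sigma'$ is then the portion, lying above the line $\{Im z=k\}$, of a Euclidean geodesic semicircle, say with centre $a\in\mathbb{R}$ and radius $\rho>k$; its endpoints $\bigl(a\pm\sqrt{\rho^2-k^2},\,k\bigr)$ lie on $\{Im z=k\}$. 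I would then single out the Euclidean horizontal width $w:=2\sqrt{\rho^2-k^2}$ swept by this arc, and aim to express $l(\sigma')$ exactly in terms of $w$ and $k$, and only afterwards to compare $w$ with the integer $\omega(\sigma)$.

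For the length, I would parametrise the semicircle by $z(\theta)=a+\rho e^{i\theta}$, which turns the hyperbolic line element into $ds=\frac{|z'(\theta)|\,d\theta}{Im z(\theta)}=\frac{d\theta}{\sin\theta}$, the arc above $\{Im z=k\}$ being $\theta\in(\theta_0,\pi-\theta_0)$ with $\sin\theta_0=k/\rho$ and $0<\theta_0<\tfrac\pi2$. Integrating $\csc\theta$ yields $l(\sigma')=2\ln\cot\tfrac{\theta_0}{2}$. Next I would rewrite $\cot\tfrac{\theta_0}{2}=\frac{1+\cos\theta_0}{\sin\theta_0}$, substitute $\sin\theta_0=k/\rho$, $\cos\theta_0=\frac{w/2}{\rho}$ and $\rho=\sqrt{k^2+w^2/4}$, and simplify; a short manipulation gives $\cot\tfrac{\theta_0}{2}=\tfrac{w}{2k}+\sqrt{(w/2k)^2+1}$, hence
\[
\tfrac12\,l(\sigma')=\arg\sinh\!\Bigl(\tfrac{w}{2k}\Bigr)=\arg\sinh\!\Bigl(\tfrac L2\,w\Bigr).
\]
This is the exact version of the estimate, and it is in the spirit of the distance formula recorded at the beginning of this subsection, applied to the two endpoints of the excursion, which lie on the same horocycle.

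It then remains to compare $w$ with the winding number. By definition $\omega(\sigma)$ counts how many full fundamental strips the arc crosses between its two endpoints; since $Re z(\theta)=a+\rho\cos\theta$ varies monotonically by exactly $w$ along the arc, one gets $\omega(\sigma)=\lfloor w\rfloor$, i.e.\ $\omega(\sigma)\le w<\omega(\sigma)+1$. As $x\mapsto\arg\sinh x$ is strictly increasing, the displayed identity immediately gives $\arg\sinh\bigl(\tfrac L2\omega(\sigma)\bigr)\le\tfrac12 l(\sigma')<\arg\sinh\bigl(\tfrac L2(\omega(\sigma)+1)\bigr)=\arg\sinh\bigl(\tfrac L2\omega(\sigma)+\tfrac L2\bigr)$; and since $L\le 1$ we have $\tfrac L2\le 1$, so the right-hand side is at most $\arg\sinh\bigl(\tfrac L2\omega(\sigma)+1\bigr)$, which is the claimed inequality.

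The length integral is routine, so I expect the one genuinely delicate point to be the combinatorial bookkeeping of the third step: checking that the integer called the winding number (the power of $p$) is exactly $\lfloor w\rfloor$ rather than $\lceil w\rceil$ or the nearest integer — it is this choice that fixes the precise two-sided form of the bound and, in particular, whether the left inequality should be $\le$ or $<$. The only other thing to make sure of is the elementary estimate $L\le 1$ (equivalently $k\ge 1$) coming from the convention that cusp levels are positive integers, which is what lets one replace $\tfrac L2$ by $1$ on the right-hand side.
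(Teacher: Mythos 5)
You should first note that the paper does not prove this proposition at all: it is imported verbatim from the literature (the text points to Lemma 4.1 of \cite{basm} and Corollary 2.2 of \cite{Vo}), so any proof you give is by definition a different route from the paper. What you wrote is essentially the standard argument behind those references, and the core of it is correct: in the upper half-plane the lift of the excursion is the part of a semicircle of radius $\rho$ above $\{\mathrm{Im}\,z=k\}$, the length integral $\int \csc\theta\,d\theta$ gives $l(\sigma')=2\ln\cot\frac{\theta_0}{2}$, and the algebra yielding $\frac12 l(\sigma')=\mathrm{arg\,sinh}\bigl(\frac{w}{2k}\bigr)=\mathrm{arg\,sinh}\bigl(\frac{L}{2}w\bigr)$ with $w=2\sqrt{\rho^2-k^2}$ is exact; note this also quietly corrects the displayed distance formula in Subsection \ref{cuscros}, where the square root should contain $+1$ rather than $-1$ (i.e.\ $\mathrm{arg\,sinh}$, not $\mathrm{arg\,cosh}$). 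The one place where your argument rests on a choice rather than a computation is the step $\omega(\sigma)=\lfloor w\rfloor$, and you correctly identify it as the delicate point: the paper's definition of the winding number (``the power $n$ of the parabolic isometry'', i.e.\ the exponent $r_i$ of the block $p^{r_i}$ in the coding, or the number of fundamental strips crossed) only guarantees $|w-\omega(\sigma)|<1$ in general, and if $\omega(\sigma)=\lceil w\rceil>w$ the left inequality $\mathrm{arg\,sinh}(\frac{L}{2}\omega(\sigma))\leqslant\frac12 l(\sigma')$ fails marginally. So to make your proof complete you must either adopt (as Vo does) the convention that the winding number counts \emph{complete} wraps, which is exactly your $\lfloor w\rfloor$ and gives $\omega(\sigma)\leqslant w<\omega(\sigma)+1$, or weaken the left-hand side by one unit of $\frac{L}{2}$; with that convention fixed, your monotonicity argument and the use of $L\leqslant 1$ (indeed $L\leqslant 2$ suffices) to pass from $\mathrm{arg\,sinh}(\frac{L}{2}(\omega(\sigma)+1))$ to $\mathrm{arg\,sinh}(\frac{L}{2}\omega(\sigma)+1)$ are fine, and in fact your intermediate bound is sharper than the stated one.
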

Since $ L=\frac{1}{k} $, $ \omega(\sigma)=n $ and for every $ x\in\mathbb{R} $, since $ \arg\sinh(x)=\log{(x+\sqrt{x^2+1})}$, for large $x$ we have
$ \arg\sinh(x)\sim\log(2x) $ we have for every larger integer $ |n|\geqslant 2k $
\begin{equation}
l(\sigma')\sim d(0,p^{n}.0)\sim2\log\Big(\frac{n}{k}\Big).
\end{equation}

\subsection{Geometric encoding of the limit set}
In this subsection let us firstly introduce three alphabets of the limit set $ \Lambda $ of the group $ \Gamma $. The first one is $ \mathcal{A}_{1}=\{h^{\pm1}, p^{\pm1}\} $ constituted by the generators of $ \Gamma $ and their inverses. Every element $ \gamma\in\Gamma\setminus\{Id\}$ is written as follows $ \gamma=s_{1}s_{2}...s_{l} $ where $ s_{i}\in\mathcal{A}_{1} $ and $ s_{i+1}\neq s_{i}^{-1} $. So there is a bijection between the set of finite reduced sequences $ \{(s_{1},...s_{l}):l\in\mathbb{N}\} $ and the set $ \Gamma\setminus\{Id\} $. Consider the set of infinite sequences $ S=\{s=(s_{i})_{i\geq1}/s_{i}\in\mathcal{A}_{1},\; s_{i+1}\neq s_{i}^{-1}\} $ and define the function $ f:S\rightarrow\Lambda $ such that the infinite reduced sequence $s=(s_{i})_{i\geq1} $ is associated to the limit point $ \xi $ defined by $$ \xi=\lim_{m\rightarrow+\infty}\gamma_{m}(0)$$ where 
$ \gamma_{m}=s_{1}s_{2}...s_{m} $. The function $ f $ is  surjective and not injective. Each parabolic limit point is encoded (non-uniquely) by the sequences $ (s_{i})_{i\geq1} $ in $ S $ which are constant for large $ i $, and whose repeated term is a parabolic isometry. Each radial limit point is encoded by a unique sequence $ (s_{i})_{i\geq1} $ for which, if the term $ s_{i} $ is a parabolic isometry, there exists $ j>i $ such that $ s_{j}\neq s_{i} $. For more details we refer to \cite{dalbo}.\\
Let us consider a second alphabet $ \mathcal{A}_{2} $ whose elements are the powers of the isometries $ h $ and $ p $. So a reduced word representing a limit point $ \xi $ is written as follow
$$ \xi= h^{m_1}p^{n_1}\ldots h^{m_j}p^{n_j}\ldots $$
where the powers $ m_{j} $ and $ n_{j} $ are elements of $ \mathbb{Z}^{\ast} $. We write $ \mathcal{A}_{2}=\{h^{\mathbb{Z}^{\ast}},p^{\mathbb{Z}^{\ast}}\} $. Here also every element $ \gamma $ of the set $ \Gamma\setminus\{Id\} $ is uniquely written as follows $$\gamma=c_{1}...c_{l},\;\;\mbox{where}\;\;c_{i}\in\mathcal{A}_{2}\;\;\mbox{and}\,\,c_{i+1}\neq c_{i}^{-1}. $$
Let us introduce our third alphabet. For this we consider a compact region $ W $ of the surface $ \Sigma $ and a cusp $ \mathcal{C}_{p^{+}} $ such that $ N(\Sigma)=W\bigcup\mathcal{C}_{p^{+}} $. Let 
$$ \xi=h^{m_1}p^{n_1}h^{m_2}p^{n_2}h^{m_3}p^{n_3}h^{m_4}p^{n_4}...h^{m_{j-1}}p^{n_{j-1}}h^{m_j}p^{n_j}
...$$ be an element of $ \Lambda $ written in the alphabet $ \mathcal{A}_{2} $ and $ (\gamma_{j}) $ be the sequence of elements of $ \Gamma $ such that 
$$ \gamma_{j}=h^{m_1}p^{n_1}...h^{m_j}p^{n_j}\;\;\mbox{and}\;\;\gamma_{j}(0)\rightarrow\xi. $$
Denote $ N $ the minimum, if there exists, of the absolute value of the powers of $ p $ such that 
$$ \gamma_{N}(0)=h^{m_1}p^{n_1}...p^{N}(0)\notin W\;\;\mbox{and so}\;\;  h^{m_1}p^{n_1}...p^{N}(0)\in\mathcal{C}_{p^{+}}. $$
According to the compact set $ W $, we rewrite the isometry $ \gamma_{j} $ as follow
$$ \gamma_{j}=\gamma_{q}=\omega_{1}p^{r_{1}}\omega_{2}p^{r_{2}}...\omega_{q}p^{r_{q}} $$
where for every $ i\in[1,q]\;\;\mbox{and for every}\;\;k\in[1,s_{i}]$ 
$$|r_{i}|\geq N\;\;\mbox{and}\;\;\omega_{i}=h^{j_{1}}p^{l_{1}}h^{j_{2}}p^{l_{2}}...
p^{l_{s_{i}-1}}h^{j_{s_{i}}}\;\;\mbox{with}\;\;|l_{k}|<N\;\;\mbox{and}\;\;
j_{k}\in\mathbb{Z}^{\ast}. $$

%Creo que lo siguiente es falso:
%\begin{prop}[\textbf{Proposition 4.1 in \cite{v}}]\label{propcoding}
%	Let $ (s_{1}, s_{2},...) $ be the coding sequence of $\xi\in \Lambda$. Then
%	$ \xi\in\Lambda_{br} $ if and only if there exists an integer $ N>0 $ such that for every integer $ i\geq N $, $ s_{i} $ is hyperbolic. Equivalently, 
%	$ \xi\in\Lambda_{ur} $ if only if $\xi\in \Lambda_r$ and for all $N>0$ there exists $i>N$ such that $s_i$ is parabolic. 
%\end{prop}
Now since $ \Lambda_{p}\subset\Lambda_{\infty} $ let us examine the intersection $ \Lambda_{\infty}\cap\Lambda_{r} $. For this one knows that the radial limit set $ \Lambda_{r} $ is subdivided into two disjoints subsets. The \textit{uniformly} or \textit{bounded radial limit set} denoted $ \Lambda_{br} $, consisting of the limit points $ \xi\in\Lambda $ for which there exist a geodesic ray $ \tilde{\sigma} $ of $ \mathbb{H} $ ending at $ \xi $ and a constant $ c>0 $ such that $ d(\tilde{\sigma}(t),\Gamma(0))<c $, for all $ t>0 $. Note that this is equivalent to saying that the trajectory $\pi([\tilde{\sigma}(0),\xi))$ lays entirely in a compact subset $W\subset\Sigma$. The term $ \pi $ denotes the natural projection of $ \mathbb{D} $ onto $ \Sigma $. For instance, fixed points of hyperbolic isometries are bounded radial limit points. The complement of subset $\Lambda_{br} $ in $ \Lambda_{r} $ is called the \textit{unbounded radial limit set} of $ \Gamma $ and it is denoted $ \Lambda_{ur} $. Hence $ \Lambda_{r}=\Lambda_{br}\sqcup\Lambda_{ur} $.

\begin{lem}
	Every element of $ \Lambda_{br} $ is not a point of $\Lambda_{\infty}$ i.e 
	$$ \Lambda_{br}\cap\Lambda_{\infty}=\emptyset. $$
\end{lem}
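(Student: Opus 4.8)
The plan is to argue directly from the definitions, without invoking the coding of Theorem~\ref{average}. Recall that, by Definition~\ref{defpointaverage}, a limit point $\xi$ fails to belong to $\Lambda_{\infty}$ as soon as there exists \emph{one} geodesic ray $\sigma$ whose lift ends at $\xi$ and \emph{one} compact set $W\subset\Sigma$ for which the limit in~(\ref{eq_divergencia}) is not zero. Since $\Lambda_{br}\subset\Lambda_{r}\subset\Lambda$, every $\xi\in\Lambda_{br}$ is already a limit point, so it suffices to exhibit such a pair $(\sigma,W)$.

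Concretely, I would fix $\xi\in\Lambda_{br}$ and use the very definition of the bounded radial limit set: there is a geodesic ray $\tilde\sigma$ of $\mathbb{H}$ (equivalently of $\mathbb{D}$) with $\tilde\sigma(\infty)=\xi$ and a constant $c>0$ such that $d(\tilde\sigma(t),\Gamma(0))<c$ for all $t>0$. As observed above, this is equivalent to saying that the projected ray $\sigma:=\pi\circ\tilde\sigma$ has its entire image $\sigma([0,\infty))=\pi([\tilde\sigma(0),\xi))$ contained in some compact subset $W\subset\Sigma$. Plugging this $W$ into~(\ref{eq_divergencia}), one gets $\chi_{W}(\sigma(t))=1$ for every $t\geq 0$, hence
$$\frac{1}{T}\int_{0}^{T}\chi_{W}(\sigma(t))\,dt=1\qquad\text{for all }T>0,$$
so $\lim_{T\to+\infty}\frac{1}{T}\int_{0}^{T}\chi_{W}(\sigma(t))\,dt=1\neq 0$. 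Therefore $\xi\notin\Lambda_{\infty}$, which gives $\Lambda_{br}\cap\Lambda_{\infty}=\emptyset$.

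If one wishes, the same conclusion can be promoted to hold for \emph{every} geodesic ray ending at $\xi$: two geodesic rays with a common endpoint at infinity are asymptotic, so any ray $\sigma'$ ending at $\xi$ eventually enters the closed $1$-neighbourhood of $W$, which is again compact because $\Sigma=\Gamma\backslash\mathbb{D}$ is complete (Hopf--Rinow), while the finite initial arc of $\sigma'$ is compact as well; hence $\sigma'$ also remains in a compact set. This refinement is not needed for the statement as phrased. As for difficulty: there is essentially no real obstacle here, since the lemma is a direct unwinding of the definitions of $\Lambda_{br}$ and $\Lambda_{\infty}$; the only point deserving a moment's care is the logical form of ``$\xi\notin\Lambda_{\infty}$'', namely that it is enough to violate~(\ref{eq_divergencia}) for a single ray and a single compact set.
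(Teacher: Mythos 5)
Your argument is correct and is essentially the paper's own proof: the paper likewise takes the compact set $W_\sigma$ containing the entire projected ray (which exists by the definition of $\Lambda_{br}$) and computes that the time-average in~(\ref{eq_divergencia}) equals $1$, so $\xi\notin\Lambda_{\infty}$. Your extra remarks on the quantifier structure and on promoting the conclusion to every asymptotic ray are fine but not needed, as the paper also relies on a single ray and a single compact set.
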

\begin{proof}
From the definition of bounded radial point, if a ray $\sigma(t)$ has a lift directed to $\tilde\sigma(\infty)\in\Lambda_{br}$, then there exists a compact set $W_\sigma$ in the surface, such that $\sigma(t)\in W_\sigma$ for all $t\geqslant0$. So if we compute the integral of equation \ref{eq_divergencia}:
	
	\[\lim_{T\rightarrow+\infty}\frac{1}{T}\int_{0}^{T}\chi_{W_{\sigma}}(\sigma(t))dt=
	\lim_{T\rightarrow+\infty}\frac{1}{T}\int_{0}^{T}1dt=1,\]
	
	so $\tilde\sigma(\infty)$ is not a diverging on average limit point. 
	
%	If $\xi\in \Lambda_{br} $, from proposition \ref{propcoding} we deduce that there are only a finite number of parabolic isometries in its coding, i.e. $\xi=s_1\ldots s_l s_{l+1}\ldots,$ where $s_k$ is hyperbolic for all $k>l$. Then, if we look at the trajectory of any geodesic $\tilde \sigma$ such that $\tilde\sigma(\infty)$ in the fundamental domain, it will meet $\partial D(p)$ a finite number of times. This means that there exists an horocycle centered at $\infty$,  $H_{\tilde\sigma}$, which is never reached by $\tilde \sigma$. We will now consider the compact $W_{\tilde\sigma}$ limited by $H_{\tilde\sigma}$ and the Nielsen region that is,  limited by $H_{\tilde\sigma}$ and the orthogonal geodesic segments joining $\partial D(p), \partial D(h), \partial D(h^{-1})$ and $\partial D(p^{-1})$). It is now straightforward to see that $\tilde \sigma (t)$ belongs to $W_{\tilde\sigma}$ for all $t>0$, which concludes the proof. 
\end{proof} 
In light of the above proposition, the bulk of the work that remains to be done is to analyze the intersection $ \Lambda_{br}^{\infty}=\Lambda_{\infty}\cap\Lambda_{ur} $.

%Finally, with the change of the definition of unbounded radial points we don't need this proposition at all :
\begin{prop}\label{radunb}%[see \cite{Dalbo-Starkov}]
	A limit point $ \xi $ belongs to the set $ \Lambda_{ur} $ iff $\xi$ is radial and the sequence of powers of the parabolic isometry present in its coding is unbounded.
\end{prop}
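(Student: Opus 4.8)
The plan is to prove the following reformulation: for a radial limit point $\xi$, one has $\xi\in\Lambda_{br}$ if and only if the sequence of powers of $p$ occurring in its $\mathcal{A}_{2}$-coding is bounded. Since $\Lambda_{r}=\Lambda_{br}\sqcup\Lambda_{ur}$, this is equivalent to the statement. Write the coding as $\xi=h^{m_{1}}p^{n_{1}}h^{m_{2}}p^{n_{2}}\cdots$. If only finitely many blocks $p^{n_{j}}$ occur — equivalently the $\mathcal{A}_{1}$-coding is eventually constant equal to $h$ or to $h^{-1}$ — then $\xi$ is a $\Gamma$-translate of a fixed point of the hyperbolic isometry $h$, hence bounded radial, and it has a finite (in particular bounded) sequence of $p$-powers, so the equivalence holds trivially. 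So assume infinitely many $p$-blocks and set $\sigma=\pi([0,\xi))$, $\gamma_{j}=h^{m_{1}}p^{n_{1}}\cdots h^{m_{j}}p^{n_{j}}$, so that $\sigma$ crosses in order the tiles $w\Delta$ indexed by the prefixes $w$ of the $\mathcal{A}_{1}$-reduction of $\xi$.

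The heart of the matter is a geometric dictionary between the integers $n_{j}$ and the depths of the cusp excursions of $\sigma$. I claim there is a constant $C=C(\Gamma)\geq1$ such that: first, whenever $\sigma$ enters $\mathcal{C}_{p^{+}}(k)$ this happens while $\sigma$ runs through the tiles $\gamma_{j-1}h^{m_{j}}p^{a}\Delta$ (with $a$ between $0$ and $n_{j}$) attached to some block $p^{n_{j}}$ with $|n_{j}|\ge k/C$, so that a deep excursion forces a large power of $p$ in the coding; and second, conversely, if $|n_{j}|\ge Ck$ then $\sigma$ does enter $\mathcal{C}_{p^{+}}(k)$ during that block. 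To prove this I would pass to $\mathbb{H}$ and, for a fixed $j$, apply the isometry $\psi_{j}$ sending the parabolic point $\gamma_{j-1}h^{m_{j}}(p^{+})$ to $\infty$ and conjugating the relevant parabolic to $z\mapsto z+1$; then the tiles of the block become a horizontally $1$-periodic family near $\infty$, and the corresponding sub-arc of $\psi_{j}(\tilde\sigma)$ is a Euclidean semicircle whose two feet on $\mathbb{R}$ are $|n_{j}|+O(1)$ apart, the error being uniform in $j$ because the excursion enters and leaves that family through a bounded portion of the Ford domain. Such a semicircle reaches Euclidean height $\asymp|n_{j}|$, i.e. meets $\mathcal{HD}_{\infty}(k)$ exactly for $k\lesssim|n_{j}|$; together with Proposition~\ref{smalength} and the estimate $l(\sigma')\sim2\log(n/k)$ this yields both halves of the claim. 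Finally, to see that a deep excursion can come only from a $p$-block and never from the adjacent powers of $h$, note that the tiles $\gamma_{j-1}h^{\ell}\Delta$ through which $\sigma$ passes all lie in a uniformly bounded neighbourhood of the translation axis of the hyperbolic element $\gamma_{j-1}h\gamma_{j-1}^{-1}$, whose projection to $\Sigma$ is the compact closed geodesic determined by $h$; hence, outside the $p^{n_{j}}$-blocks, $\sigma$ stays in a fixed compact neighbourhood of the core $\Sigma_{0}$ and never enters a deep cusp.

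Granting the dictionary, both implications follow quickly. If $|n_{j}|\le N$ for all $j$, then by the first property $\sigma$ never enters $\mathcal{C}_{p^{+}}(M)$ with $M:=\lceil CN\rceil+1$; since $\xi$ is radial the ray $\sigma$ cannot remain in a funnel, so after a compact initial piece $\sigma$ lies in $N(\Sigma)$, and as $N(\Sigma)\setminus\mathcal{C}_{p^{+}}(M)$ is compact, all of $\sigma$ lies in a compact subset of $\Sigma$; by the definition of the bounded radial set, $\xi\in\Lambda_{br}$. If instead $(n_{j})$ is unbounded, then for every $k$ the second property produces a $j$ for which $\sigma$ enters $\mathcal{C}_{p^{+}}(k)$; hence $\sigma$ leaves every compact subset of $\Sigma$, and the same holds for the projection of any geodesic ray ending at $\xi$ (these are mutually asymptotic), so $\xi\notin\Lambda_{br}$. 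Since $\xi$ is radial by hypothesis, $\xi\in\Lambda_{ur}=\Lambda_{r}\setminus\Lambda_{br}$.

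The main obstacle is the dictionary, and specifically the quantitative comparison $L_{j}\asymp|n_{j}|$ between the $j$-th winding power in the coding and the cusp level $L_{j}$ reached during the $j$-th excursion, uniformly in $j$: this requires locating the Ford-domain sides $C(p),C(p^{-1})$ in $\mathbb{H}$ after the normalization by $\psi_{j}$ and checking that the relevant semicircle really has feet $|n_{j}|+O(1)$ apart regardless of the surrounding $h$-blocks. The complementary fact that hyperbolic blocks cannot generate cusp excursions is conceptually clear but still relies on the uniform bound for the distance from the tiles $\gamma_{j-1}h^{\ell}\Delta$ to the corresponding axis.
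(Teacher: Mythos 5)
Your argument is correct and takes essentially the same route as the paper: a uniform bound $N$ on the parabolic powers caps the depth of every cusp excursion at a level comparable to $N$ (the paper phrases this via the height of the geodesic joining $i$ and $p^{N}(i)$), so the ray stays in the compact set $N(\Sigma)\setminus\mathcal{C}_{p^{+}}(M)$ and the endpoint is bounded radial, while unbounded powers produce arbitrarily deep excursions and hence an unbounded radial point. You are in fact more complete than the paper, which only writes out the bounded-powers direction; the one phrase to repair is the claim that the tiles $\gamma_{j-1}h^{\ell}\Delta$ lie in a uniformly bounded neighbourhood of the conjugated axis --- these tiles are unbounded (they touch the circle at infinity, including parabolic vertices), so the uniform-compactness assertion should be made for the portion of the ray crossing them (for instance, by noting that deep penetration into a tile's cusp corner forces a large power of $p$ immediately afterwards in the coding), not for the tiles themselves.
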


\begin{proof}
	The fact that $\xi$ is radial translates into its coding as follows
	$$
	\xi= h^{m_1}p^{n_1}\ldots h^{m_j}p^{n_j}\ldots,
	$$
	where the sequence $m_j$ cannot be constant equal to $0$ from a certain point (see \cite{Dalbo-Starkov}).
	
	 We will now see what happens with the sequence $n_j$. Suppose $n_j$ bounded, then there exists $N$ such that $n_j\leq N$ for all $j\in \mathbb N$. If we now consider the projection in the surface of a ray starting from $i\in \mathbb{H}^2$ and directed towards $\xi$, the highest horocycle that the ray can possibly meet in the surface will be the projection of the horocycle $\mathcal{H}_{\infty}(N)$ defined by all the points with the same imaginary part of the geodesic joining $i$ and $p^N(i)$, see Figure \ref{dibujo8}. As a result, we just have to consider the compact set defined by intersection of complementary of the horodisc defined by $\mathcal{HD}_{\infty}(N)$ and the Nielsen region to obtain a compact set that entirely contains the ray $\pi([i,\xi)$, which is a contradiction with $\xi$ being an unbounded radial limit point.
\end{proof}

\begin{figure}[htbp]
	\centering
	\begin{normalsize} 
		\def\svgwidth{11cm}
		%\def\svgscale{0.5}
		%% Creator: Inkscape 1.3.2 (091e20e, 2023-11-25, custom), www.inkscape.org
%% PDF/EPS/PS + LaTeX output extension by Johan Engelen, 2010
%% Accompanies image file 'dibujo8.pdf' (pdf, eps, ps)
%%
%% To include the image in your LaTeX document, write
%%   \input{<filename>.pdf_tex}
%%  instead of
%%   \includegraphics{<filename>.pdf}
%% To scale the image, write
%%   \def\svgwidth{<desired width>}
%%   \input{<filename>.pdf_tex}
%%  instead of
%%   \includegraphics[width=<desired width>]{<filename>.pdf}
%%
%% Images with a different path to the parent latex file can
%% be accessed with the `import' package (which may need to be
%% installed) using
%%   \usepackage{import}
%% in the preamble, and then including the image with
%%   \import{<path to file>}{<filename>.pdf_tex}
%% Alternatively, one can specify
%%   \graphicspath{{<path to file>/}}
%% 
%% For more information, please see info/svg-inkscape on CTAN:
%%   http://tug.ctan.org/tex-archive/info/svg-inkscape
%%
\begingroup%
  \makeatletter%
  \providecommand\color[2][]{%
    \errmessage{(Inkscape) Color is used for the text in Inkscape, but the package 'color.sty' is not loaded}%
    \renewcommand\color[2][]{}%
  }%
  \providecommand\transparent[1]{%
    \errmessage{(Inkscape) Transparency is used (non-zero) for the text in Inkscape, but the package 'transparent.sty' is not loaded}%
    \renewcommand\transparent[1]{}%
  }%
  \providecommand\rotatebox[2]{#2}%
  \newcommand*\fsize{\dimexpr\f@size pt\relax}%
  \newcommand*\lineheight[1]{\fontsize{\fsize}{#1\fsize}\selectfont}%
  \ifx\svgwidth\undefined%
    \setlength{\unitlength}{1182bp}%
    \ifx\svgscale\undefined%
      \relax%
    \else%
      \setlength{\unitlength}{\unitlength * \real{\svgscale}}%
    \fi%
  \else%
    \setlength{\unitlength}{\svgwidth}%
  \fi%
  \global\let\svgwidth\undefined%
  \global\let\svgscale\undefined%
  \makeatother%
  \begin{picture}(1,0.6734348)%
    \lineheight{1}%
    \setlength\tabcolsep{0pt}%
    \put(0,0){\includegraphics[width=\unitlength,page=1]{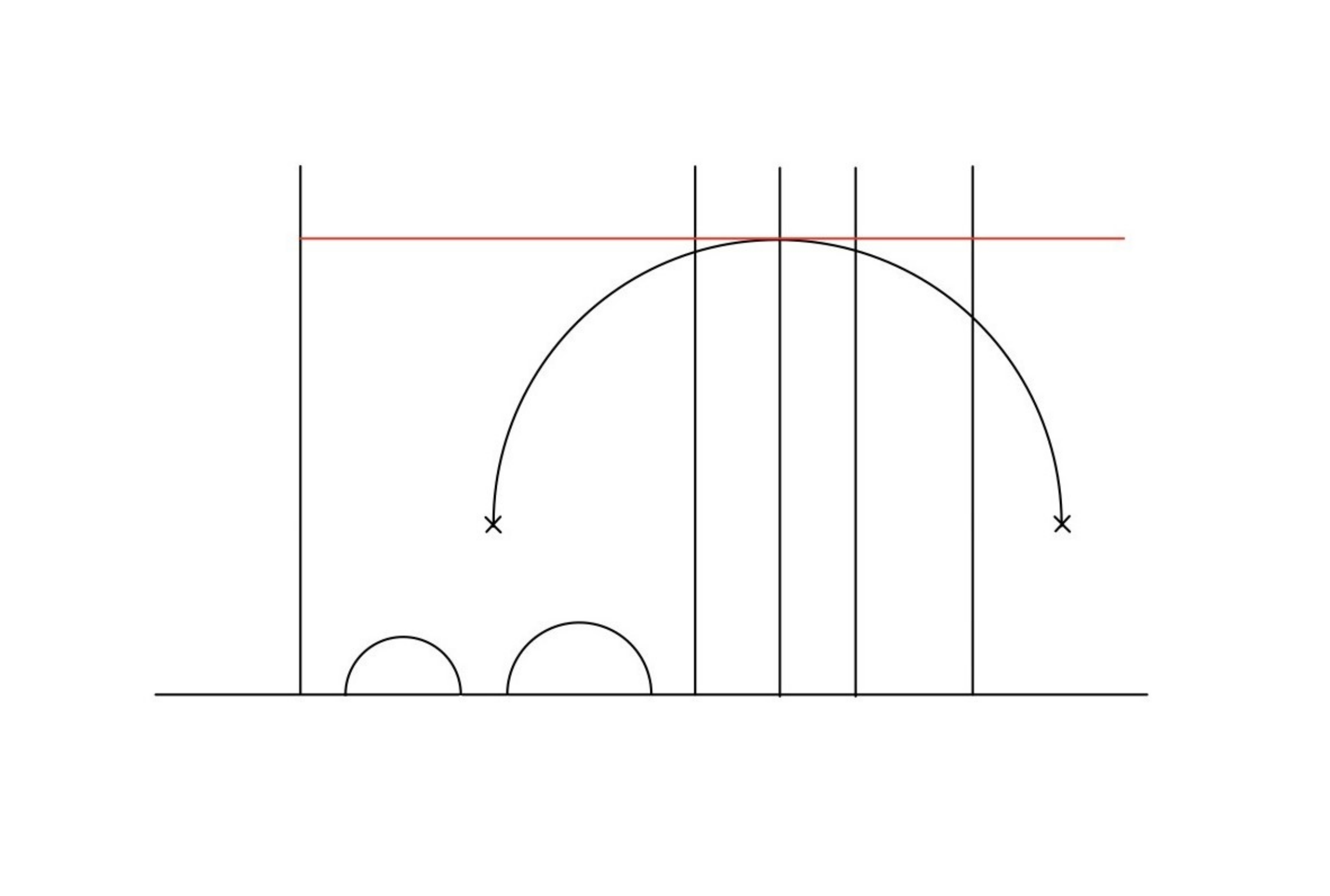}}%
    \put(0.38470269,0.2780687){\color[rgb]{0.05490196,0.05490196,0.05490196}\makebox(0,0)[lt]{\lineheight{1.25}\smash{\begin{tabular}[t]{l}$i$\end{tabular}}}}%
    \put(0.81944135,0.2780687){\color[rgb]{0.05490196,0.05490196,0.05490196}\makebox(0,0)[lt]{\lineheight{1.25}\smash{\begin{tabular}[t]{l}$p^N(i)$\end{tabular}}}}%
    \put(0.16651312,0.49379752){\color[rgb]{0.05490196,0.05490196,0.05490196}\makebox(0,0)[lt]{\lineheight{1.25}\smash{\begin{tabular}[t]{l}$\tilde{W}_N$\end{tabular}}}}%
    \put(0.67179429,0.35025172){\color[rgb]{0.05490196,0.05490196,0.05490196}\makebox(0,0)[lt]{\lineheight{1.25}\smash{\begin{tabular}[t]{l}$\ldots$\end{tabular}}}}%
  \end{picture}%
\endgroup%

	\end{normalsize}
	\caption{The highest horocycle that $\pi([i,\xi))$ can meet is $\pi(\mathcal{HD}_{\infty}(N))$).}
	\label{dibujo8}
 \end{figure}

\subsection{Proof of theorem \ref{average}}\label{fundcros}
Let $ \sigma $ to be a geodesic ray of the hyperbolic surface $ \Sigma $ parameterized by hyperbolic arc-length such that one of its lifts, $ \tilde{\sigma} $ on $ \mathbb{D} $ verifies $ \tilde{\sigma}(0)=0 $ and $ \tilde{\sigma}(\infty)=\xi\in\Lambda_{ur} $. Consider now an increasing sequence of compacts $ (W_{k})_{k\geq1} $ of the surface $ \Sigma $ such that $ \pi(0)\in W_{1} $ and $ N(\Sigma)=W_{k}\cup\mathcal{C}_{p^{+}}(k) $. Every $ W_{k} $ has piecewise $ C^{1} $ boundaries. Let $ T_{k} $ be a time such that $ \sigma(T_{k})=w_{k}\in W_{k} $. We put 
$$ l_{W_{k}}=l(\sigma_{|[0,T_{k}]}\cap W_{k})\;\;\mbox{and}\;\; l_{C_{p^{+}}(k)}=l(\sigma_{|[0,T_{k}]}\cap\mathcal{C}_{p^{+}}(k)) $$
where $ l(\alpha) $ designs the length of a curve $ \alpha $ in the surface. Since the geodesic $ \sigma $ is parameterized by hyperbolic arc length we have :
\begin{equation}
\frac{1}{T_{k}}\int_{0}^{T_{k}}\chi_{W_k}(\sigma(t))dt=\frac{l_{W_{k}}}{l_{W_{k}}+l_{C_{p^{+}}(k)}}=
\frac{1}{1+\frac{l_{C_{p^{+}}(k)}}{l_{W_{k}}}}.
\end{equation} 
So by definition $ \xi\in\Lambda_{\infty} $ if only if for every integer $ k\geq0 $, the ratio 
$ \frac{l_{W_{k}}}{l_{C_{p^{+}}(k)}} $ goes to $ 0 $ whenever $ T_{k} $ goes to infinity.\\
%\begin{figure}[htbp]
%\begin{center}
%\includegraphics[scale=0.35]{excursion3.pdf}\hspace{1cm}
%\caption{A geodesic ray winds in a cusp and returns in the corresponding compact}\label{excur3}
%\label{canal}
%\end{center}
%\end{figure} 

\begin{figure}[htbp]
	\centering
	\begin{normalsize} 
		\def\svgwidth{10cm}
		%\def\svgscale{0.5}
		%% Creator: Inkscape 1.3.2 (091e20e, 2023-11-25, custom), www.inkscape.org
%% PDF/EPS/PS + LaTeX output extension by Johan Engelen, 2010
%% Accompanies image file 'dibujo4.pdf' (pdf, eps, ps)
%%
%% To include the image in your LaTeX document, write
%%   \input{<filename>.pdf_tex}
%%  instead of
%%   \includegraphics{<filename>.pdf}
%% To scale the image, write
%%   \def\svgwidth{<desired width>}
%%   \input{<filename>.pdf_tex}
%%  instead of
%%   \includegraphics[width=<desired width>]{<filename>.pdf}
%%
%% Images with a different path to the parent latex file can
%% be accessed with the `import' package (which may need to be
%% installed) using
%%   \usepackage{import}
%% in the preamble, and then including the image with
%%   \import{<path to file>}{<filename>.pdf_tex}
%% Alternatively, one can specify
%%   \graphicspath{{<path to file>/}}
%% 
%% For more information, please see info/svg-inkscape on CTAN:
%%   http://tug.ctan.org/tex-archive/info/svg-inkscape
%%
\begingroup%
  \makeatletter%
  \providecommand\color[2][]{%
    \errmessage{(Inkscape) Color is used for the text in Inkscape, but the package 'color.sty' is not loaded}%
    \renewcommand\color[2][]{}%
  }%
  \providecommand\transparent[1]{%
    \errmessage{(Inkscape) Transparency is used (non-zero) for the text in Inkscape, but the package 'transparent.sty' is not loaded}%
    \renewcommand\transparent[1]{}%
  }%
  \providecommand\rotatebox[2]{#2}%
  \newcommand*\fsize{\dimexpr\f@size pt\relax}%
  \newcommand*\lineheight[1]{\fontsize{\fsize}{#1\fsize}\selectfont}%
  \ifx\svgwidth\undefined%
    \setlength{\unitlength}{1995bp}%
    \ifx\svgscale\undefined%
      \relax%
    \else%
      \setlength{\unitlength}{\unitlength * \real{\svgscale}}%
    \fi%
  \else%
    \setlength{\unitlength}{\svgwidth}%
  \fi%
  \global\let\svgwidth\undefined%
  \global\let\svgscale\undefined%
  \makeatother%
  \begin{picture}(1,0.75288224)%
    \lineheight{1}%
    \setlength\tabcolsep{0pt}%
    \put(0,0){\includegraphics[width=\unitlength,page=1]{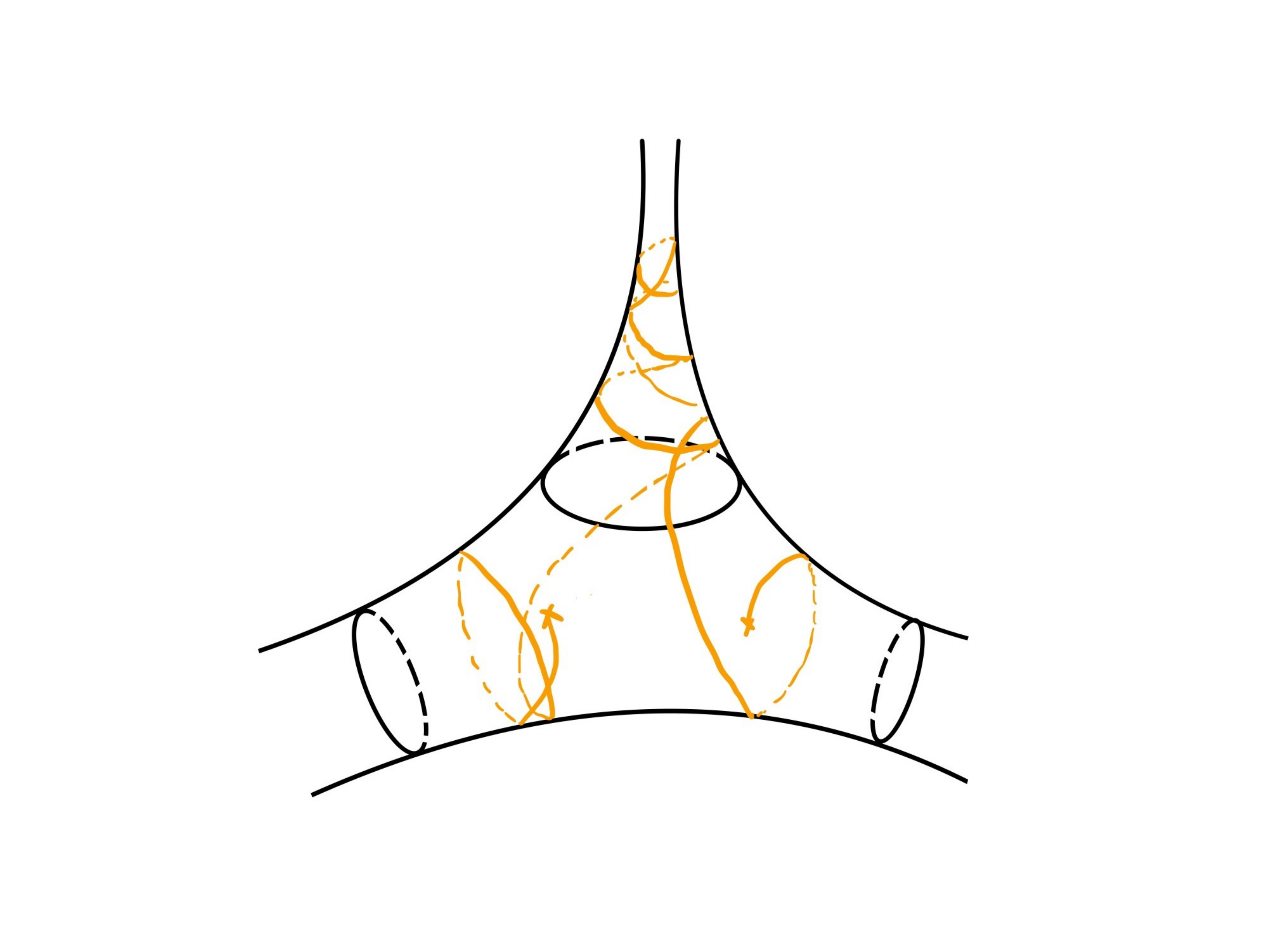}}%
    \put(0.45393021,0.26960706){\color[rgb]{0.05490196,0.05490196,0.05490196}\makebox(0,0)[lt]{\lineheight{1.25}\smash{\begin{tabular}[t]{l}$\pi(0)$\end{tabular}}}}%
    \put(0.59790402,0.37965072){\color[rgb]{0.05490196,0.05490196,0.05490196}\makebox(0,0)[lt]{\lineheight{1.25}\smash{\begin{tabular}[t]{l}$C_{p^+}(k)$\end{tabular}}}}%
    \put(0.59790402,0.24576421){\color[rgb]{0.05490196,0.05490196,0.05490196}\makebox(0,0)[lt]{\lineheight{1.25}\smash{\begin{tabular}[t]{l}$w=\sigma(T)$\end{tabular}}}}%
  \end{picture}%
\endgroup%

	\end{normalsize}
	\caption{The geodesic segment $\sigma|_{[0,T]}$ winds in a cusp and returns in the corresponding compact}
	\label{excur3}
\end{figure}

To prove the theorem \ref{average} we start by establishing a lemma and a corollary.

%Lemma con anterior prueba
%\begin{lem}
%For any $ \theta>0 $, there is some constant $ C(\theta) $ such that for any hyperbolic triangle with sides $ a,\;b,\;c $ and opposite angles $ \theta',\;\theta",\;\theta''' $ with $ \theta'''\geq\theta $ we have $$ a+b-C(\theta)\leqslant c\leqslant a+b. $$
%\end{lem}\label{tid1}
%\begin{proof}
%For the first inequality, the law of the cosines (see \cite{Buser}) gives
%\begin{align*}
%\cosh c&=&\cosh a\cosh b-\sinh a\sinh b\cos\theta'''\\
%c&\geqslant&\ln(\cosh a\cosh b-\sinh a\sinh b\cos\theta''')\\
%c&\geqslant&\ln((1-|\cos\theta'''|)\cosh a\cosh b)\\
%c&\geqslant&\ln(1-|\cos\theta'''|)+a+b\;\;\;\mbox{since}\;\cosh x\geqslant e^{x}\\
%c&\geqslant&\ln\big(2\sin^{2}\big(\frac{\theta'''}{2}\big)\big)+a+b.
%\end{align*} 
%So if suffices to set $ C(\theta)=-\ln(2\sin^{2}\big(\frac{\theta}{2}\big)) $. The triangular inequality is the second one.
%\end{proof}

\begin{lem}\label{tid1}
	For any $ \theta>0 $, there is some positive constant $ C(\theta) $ such that for any hyperbolic triangle with sides $ a,\;b,\;c $ and opposite angles $ \theta_a,\;\theta_b,\;\theta_c $ with $ \theta_c\geq\theta $ we have $$ a+b-C(\theta)\leqslant c\leqslant a+b. $$
\end{lem}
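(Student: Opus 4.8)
The plan is to work with the hyperbolic law of cosines and extract the desired two-sided bound on $c$ in terms of $a+b$, with an error term depending only on a lower bound $\theta$ for the angle $\theta_c$. The upper bound $c\leqslant a+b$ is simply the triangle inequality, so all the work lies in the lower bound $c\geqslant a+b-C(\theta)$.

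First I would write down the hyperbolic law of cosines for the angle $\theta_c$ opposite the side $c$:
\[
\cosh c=\cosh a\cosh b-\sinh a\sinh b\cos\theta_c.
\]
Since $\theta_c\geqslant\theta$ and $\theta_c<\pi$ (it is an angle of a nondegenerate triangle), we have $\cos\theta_c\leqslant\cos\theta$, and in particular $-\cos\theta_c\geqslant-\cos\theta$, so
\[
\cosh c\geqslant\cosh a\cosh b-\sinh a\sinh b\cos\theta=\cosh a\cosh b\left(1-\tanh a\tanh b\cos\theta\right).
\]
Next I would compare $\cosh a\cosh b$ with $\tfrac12 e^{a+b}$: indeed $\cosh a\cosh b\geqslant\tfrac14 e^{a+b}$ always, and more precisely $\cosh a\cosh b=\tfrac14(e^a+e^{-a})(e^b+e^{-b})\geqslant\tfrac14 e^{a+b}$. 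Combining with $\cosh c\leqslant\tfrac12 e^{c}$ applied after taking logarithms, one gets
\[
e^{c}\geqslant \cosh c\geqslant\tfrac14 e^{a+b}\left(1-\cos\theta\right)\cdot\kappa,
\]
where $\kappa$ is a factor I would need to control from below; here I use that if $\cos\theta\geqslant0$ then $1-\tanh a\tanh b\cos\theta\geqslant 1-\cos\theta>0$, and if $\cos\theta<0$ then $1-\tanh a\tanh b\cos\theta\geqslant 1$. Either way the bracket is bounded below by $\min(1,1-\cos\theta)>0$, a constant depending only on $\theta$. Taking logarithms yields $c\geqslant a+b-C(\theta)$ with $C(\theta)=\log\frac{4}{\min(1,1-\cos\theta)}$ (or a similar explicit constant).

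The main technical point — the only place one must be slightly careful — is ensuring the constant genuinely depends only on $\theta$ and not on $a,b$; this is why I replace $\tanh a\tanh b$ by its trivial bound in $[0,1)$ rather than trying to keep it, and why I pass from $\cosh$ to exponentials with the crude but uniform inequalities $\tfrac14 e^{x}\leqslant\cosh x\leqslant\tfrac12 e^{x}$ valid for all $x\geqslant0$. One small subtlety: if $a$ or $b$ is $0$ the triangle is degenerate and the statement is trivial ($c=a+b$), so I may assume $a,b>0$; and the estimate is uniform there too. I do not expect any serious obstacle — this is a standard consequence of the hyperbolic cosine rule, and the whole argument is two or three lines of elementary inequalities once the law of cosines is written down.
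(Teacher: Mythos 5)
Your argument is correct and follows essentially the same route as the paper: the hyperbolic law of cosines together with a lower bound on the factor multiplying $\cosh a\cosh b$ that depends only on $\theta$ (the paper splits into the cases $\theta_c\geq\pi/2$ and $\theta_c<\pi/2$ and writes $C(\theta)=\cosh^{-1}\bigl(\tfrac{2}{1-\cos\theta}\bigr)$, while you unify the cases via $\min(1,1-\cos\theta)$ and exponential estimates). One harmless typo: the stated inequality $\cosh x\leqslant\tfrac12 e^{x}$ is reversed (in fact $\cosh x\geqslant\tfrac12 e^{x}$), but your displayed chain only uses $\cosh c\leqslant e^{c}$ and $\cosh a\cosh b\geqslant\tfrac14 e^{a+b}$, which are valid, so the conclusion stands.
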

\begin{proof}
	For the first inequality, the law of the cosines (see \cite{Buser}) gives
	
	$$\cosh c=\cosh a\cosh b-\sinh a\sinh b\cos \theta_c$$
		
		We first assume $\theta_c\geq \frac{\pi}{2}$.
		We obtain $$\cosh c\geq \cosh a\cosh b-\sinh a\sinh b \geq\frac{\cosh(a+b)}{2}$$
		Then $$ c+\cosh^{-1}(2)\geq a+b.$$
We set in this case $ C(\theta)=\cosh^{-1}(2) $.\\		
		Suppose now $\theta_c<\frac{\pi}{2}$. Then 
		
		$$\cosh c\geq \cosh a\cosh b-\cosh a\cosh b \cos(\theta_c)$$
		and 
		$$ \cosh c\geq \frac{\cosh(a+b)}{2}(1-\cos \theta).$$
		So 
		$$c+\cosh^{-1}\left(\frac{2}{1-\cos \theta}\right)\geq a+b.$$
		
		It suffices here to set $C(\theta)=\cosh^{-1}(\frac{2}{1-\cos\theta})$.
	
	The triangular inequality is the second one.

\end{proof}

\begin{figure}[htbp]
	\centering
	\begin{normalsize} 
		\def\svgwidth{12cm}
		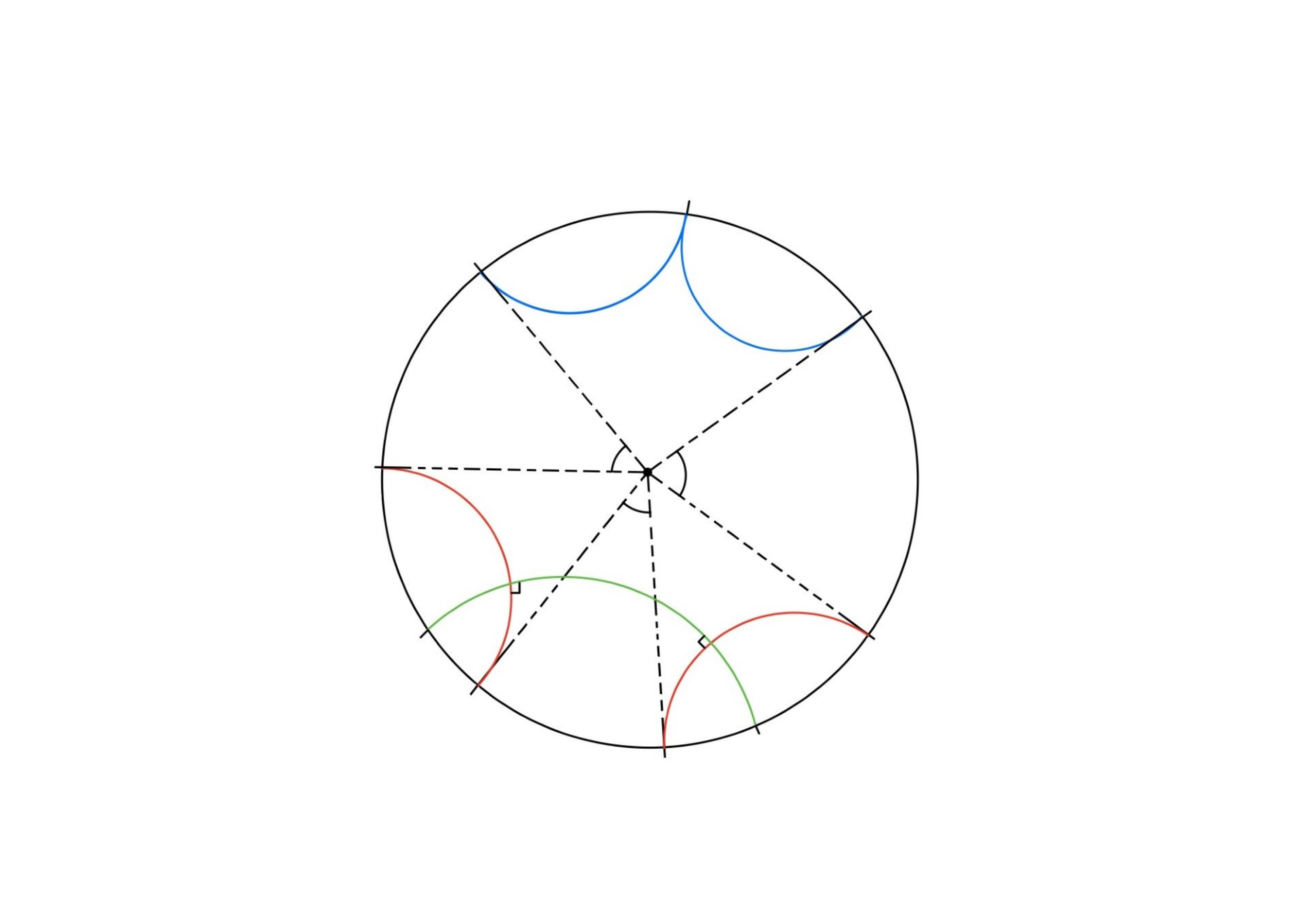
	\end{normalsize}
	\caption{Notation for Corollary \ref{tid}.}
	\label{dibujo9}
\end{figure}

Let us redesign the bisectors of the segments $ [0,g(0)] $ where $ g\in\mathcal{A}_{1} $ in putting
$$ C(h)=(x_{1},x_{2}),\;C(h^{-1})=(x_{1}',x_{2}'),\;C(p)=(p^{+},y_{1})\;\mbox{and}\;C(p^{-1})=(p^{+},
y_{2}). $$
See Figure \ref{dibujo9}. We denote
$$ \theta_{1}:=\;\mbox{the angle between the geodesic rays}\;[0,x_{1})\;\mbox{and}\;[0,x_{2}'), $$
$$ \theta_{2}:=\;\mbox{the angle between the geodesic rays}\;[0,x_{2})\;\mbox{and}\;[0,y_{1}), $$
$$ \theta_{3}:=\;\mbox{the angle between the geodesic rays}\;[0,x_{1}')\;\mbox{and}\;[0,y_{2}). $$
Consider the sub-alphabet $ \mathcal{A}_{2}^{+}=\{h^{\mathbb{Z}^{\ast}},p^{\mathbb{N}^{\ast}}\} $ of $ \mathcal{A}_{2} $ whose powers of the parabolic element $ p $ are nonegative. For every two elements $ \gamma^{1} $ and $ \gamma^{2} $ of the group $ \Gamma $ such that $\gamma^{1}=c_{1}...c_{l},\;\; $, $\gamma^{2}=c_{1}'...c_{s}',\;\;\mbox{where}\;\;c_{i}, c_{i}'\in\mathcal{A}_{2}^{+} $ and $ c_{1} $, $c_{1}' $ are distinct powers of the hyperbolic isometry $ h $, we have.

\begin{cor}\label{tid}
There exists a constant $ C=C(\theta_{0})>0 $ such that
$$ d(\gamma^{1}(0),\gamma^{2}(0))\geqslant\ d(\gamma^{1}(0),0)+d(0,\gamma^{2}(0))-C. $$
\end{cor}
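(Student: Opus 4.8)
The plan is to obtain the inequality from Lemma \ref{tid1} applied to the geodesic triangle with vertices $\gamma^{1}(0)$, $0$ and $\gamma^{2}(0)$. Write $\theta_{c}$ for the angle of this triangle at the vertex $0$; it is opposite the side of length $d(\gamma^{1}(0),\gamma^{2}(0))$, the other two sides having lengths $d(\gamma^{1}(0),0)$ and $d(0,\gamma^{2}(0))$. Hence it suffices to show that $\theta_{c}\geqslant\theta_{0}$ for the fixed positive angle $\theta_{0}:=\min\{\theta_{1},\theta_{2},\theta_{3}\}$: Lemma \ref{tid1} then yields $d(\gamma^{1}(0),\gamma^{2}(0))\geqslant d(\gamma^{1}(0),0)+d(0,\gamma^{2}(0))-C(\theta_{0})$, and we put $C:=C(\theta_{0})$. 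So the whole content of the corollary is a lower bound for the angle at the centre $0$ of the Ford domain.

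The first step is to locate $\gamma^{1}(0)$ and $\gamma^{2}(0)$ by ping--pong. Rewriting $g(D(g^{-1}))=E(g)$ as $g\big(\mathbb{D}\setminus D(g^{-1})\big)=\mathring{D}(g)$ for $g\in\mathcal{A}_{1}$, and using that $0\in\mathring{\Delta}$ and that the closures $\overline{D(h)},\overline{D(h^{-1})},\overline{D(p)},\overline{D(p^{-1})}$ are pairwise disjoint in $\mathbb{D}$ (the last two meeting at most in the boundary point $p^{+}$), an induction on word length shows that every $\gamma\in\Gamma\setminus\{Id\}$ with reduced $\mathcal{A}_{1}$--expression $\gamma=s_{1}\cdots s_{L}$ satisfies $\gamma(0)\in\mathring{D}(s_{1})$. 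For $\gamma^{1}$ and $\gamma^{2}$ the leading $\mathcal{A}_{1}$--letters are $h^{\varepsilon_{1}}$ and $h^{\varepsilon_{2}}$, where $\varepsilon_{i}\in\{+1,-1\}$ is the sign of the leading power $c_{1}$, resp.\ $c_{1}'$. Using the hypothesis that $c_{1}$ and $c_{1}'$ are distinct powers of $h$, these leading letters are different, i.e.\ $\varepsilon_{1}=-\varepsilon_{2}$, so $\gamma^{1}(0)\in\mathring{D}(h)$ while $\gamma^{2}(0)\in\mathring{D}(h^{-1})$ (or vice versa).

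It remains to bound $\theta_{c}$ below. Since $0$ is the Euclidean centre of $\mathbb{D}$, the initial direction at $0$ of a segment $[0,z]$ is exactly the angular position on $\mathbb{S}^{1}$ of the radial projection of $z$. For $g\in\mathcal{A}_{1}$, $\overline{D(g)}$ is convex, avoids $0$, and is bounded by the geodesic $C(g)$ together with the arc of $\mathbb{S}^{1}$ joining its two endpoints; hence the directions at $0$ of the segments $[0,z]$ with $z\in\mathring{D}(g)$ fill exactly the open arc of $\mathbb{S}^{1}$ cut off by the endpoints of $C(g)$. By the Schottky hypothesis the four arcs so obtained (for $g=h^{\pm1},p^{\pm1}$) are pairwise disjoint, those of $D(p)$ and $D(p^{-1})$ meeting only at the direction of $p^{+}$, and by the definitions in Figure \ref{dibujo9} the angular gaps between consecutive arcs are $\theta_{1}$ (between the arcs of $D(h^{-1})$ and $D(h)$), $\theta_{2}$ (between those of $D(h)$ and $D(p)$) and $\theta_{3}$ (between those of $D(p^{-1})$ and $D(h^{-1})$). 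Since $\gamma^{1}(0)$ and $\gamma^{2}(0)$ lie in $\mathring{D}(h)$ and $\mathring{D}(h^{-1})$, their directions at $0$ lie in the two disjoint open arcs attached to $D(h)$ and $D(h^{-1})$, which are separated on one side by the $\theta_{1}$--gap and on the other by the $\theta_{2}$--gap, the arc of $D(p)$, the arc of $D(p^{-1})$ and the $\theta_{3}$--gap; therefore $\theta_{c}\geqslant\min\{\theta_{1},\theta_{2}+\theta_{3}\}\geqslant\theta_{0}>0$, and Lemma \ref{tid1} finishes the proof.

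I expect the delicate step to be the angular estimate of the third paragraph, rather than the ping--pong induction, which is routine. One must check carefully that the shadow of $\overline{D(g)}$ seen from $0$ is precisely the closed boundary arc cut off by $C(g)$, so that ``landing in distinct half--discs'' genuinely forces an angular separation of at least $\theta_{0}$, and that $\gamma^{i}(0)$ lands in the \emph{open} half--disc, so that its direction is an interior point of the corresponding arc. It is also worth recording exactly where the hypothesis enters: the argument needs the two leading $\mathcal{A}_{1}$--letters of $\gamma^{1}$ and $\gamma^{2}$ to be distinct, and if they coincided — two leading $h$--powers of the same sign — then $\gamma^{1}(0)$ and $\gamma^{2}(0)$ would lie in the same half--disc and no lower bound on $\theta_{c}$ independent of the words could be expected.
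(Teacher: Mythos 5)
Your strategy coincides with the paper's own proof: ping--pong to place $\gamma^{1}(0)$ and $\gamma^{2}(0)$ in half--discs, a word--independent lower bound on the angle at $0$, then Lemma \ref{tid1}; and your angular estimate $\widehat{(\gamma^{1}(0),0,\gamma^{2}(0))}\geqslant\min\{\theta_{1},\theta_{2}+\theta_{3}\}\geqslant\theta_{0}$ is correct \emph{when} $\gamma^{1}(0)\in\mathring{D}(h)$ and $\gamma^{2}(0)\in\mathring{D}(h^{-1})$. The genuine gap is the sentence where you deduce $\varepsilon_{1}=-\varepsilon_{2}$ from the hypothesis: two \emph{distinct} powers of $h$ need not have opposite signs. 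Taking $c_{1}=h^{m}$ and $c_{1}'=h^{m'}$ with $m\neq m'$ and $mm'>0$ (say $h$ and $h^{2}$) satisfies the stated hypothesis, yet both words then begin with the same $\mathcal{A}_{1}$--letter, both orbit points lie in $\mathring{D}(h)$, and no lower bound on the angle at $0$ independent of the words is available: for $\gamma^{1}=h^{m}$, $\gamma^{2}=h^{m+1}$ both points converge to $h^{+}$ and the angle tends to $0$. Worse, in that example the asserted inequality itself fails for any uniform $C$, since $d(h^{m}(0),h^{m+1}(0))=d(0,h(0))$ stays constant while $d(0,h^{m}(0))+d(0,h^{m+1}(0))$ grows linearly in $m$. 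So your closing remark (that two leading $h$--powers of the same sign would destroy the bound) is accurate, but your claim that the hypothesis rules this case out is not; as written, your proof only covers the opposite--sign configuration and does not prove the statement as literally formulated.

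For what it is worth, the paper's own two--line proof has exactly the same blind spot: it asserts that $D(c_{1})$ and $D(c_{1}')$ are disjoint ``since $c_{1}\neq c_{1}'$'', which is false for nested same--sign powers ($D(h^{2})\subset D(h)$), and the bound by $\theta_{0}$ is only justified when the two leading letters determine distinct half--discs among $D(h),D(h^{-1}),D(p),D(p^{-1})$. That is also the only configuration actually needed later: in the proof of Theorem \ref{average} the word $\omega_{1}p^{r_{1}}\cdots\omega_{q}p^{r_{q}}$ is cut at the junctions between an $\omega_{i}$ and a $p^{r_{i}}$, so after the standard translation $d(0,ab(0))=d(a^{-1}(0),b(0))$ one of the two words always begins with a power of $p$ and the other with a power of $h$, where the half--discs are genuinely disjoint and your angular argument applies verbatim. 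The correct repair is therefore to strengthen (or reinterpret) the hypothesis to ``the leading letters lie in distinct half--discs'' --- equivalently, opposite--sign powers of $h$, or one power of $h$ against one power of $p$ --- rather than to claim, as you do, that ``distinct powers of $h$'' already forces $\varepsilon_{1}=-\varepsilon_{2}$.
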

\begin{proof}
We have on one hand
$$ c_{1}...c_{l}(0)\in D(c_{1})\;\;\mbox{and}\;\;c_{1}...c_{s}(0)\in D(c_{1}'). $$
On other hand the discs $ D(c_{1}) $ and $ D(c_{1}') $ are disjoint since $ c_{1}\neq c_{1}' $.	
So we have the angle
$$ \widehat{(\gamma^{1}(0),0,\gamma^{2}(0))}\geqslant\theta_{0}=\min_{1\leqslant i\leqslant3}\theta_{i}. $$
Hence from the lemma \ref{tid1}, we have the desired inequality.	
\end{proof}
\begin{proof}[Proof of the theorem \ref{average}]
Let us consider the writing of $ \xi\in\Lambda_{ur} $ in the alphabet $ \mathcal{A}_{3} $ meaning
$\xi=\omega_{1}p^{r_{1}}\omega_{2}p^{r_{2}}\omega_{3}p^{r_{3}}...,$
and $\gamma_q=\omega_{1}p^{r_{1}}...\omega_{q}p^{r_{q}}$ with $\gamma_q(0)\to\xi $ when $q$ goes to infinity. Recall that $l$ was the length of the geodesic segment of $\sigma$ from $\pi(0)$ to $\pi(\tilde\sigma(\R^+)\cap D(\gamma_q))$, where $\sigma$ is the geodesic starting at $\pi(0)$ and directed to $\xi$. In addition, we call $l_W$ the length of the arc of this geodesic segment which belongs in the compact $W$. 
Let $ \beta $ be the closed geodesic segment based on the point $ \pi(0) $ and which ends in $\pi(\gamma_q(0))$ and put
$ M_{1}=diam(W) $. We have
\begin{equation}\label{ineq}
l(\beta)-M_{1}\leqslant l\leqslant l(\beta)+M_{1}.
\end{equation}
Let $ \tilde{\beta} $ be the lift of $ \beta $ such that $ \tilde{\beta}=[0,\gamma_{q}(0)] $. We have
$$ l(\beta)=l(\pi([0,\gamma_{q}(0)]))=d(0,\gamma_{q}(0)). $$
And from the corollary \ref{tid} there exists a constant $ C>0 $ such that
\begin{equation}\label{lengthloop}
\sum_{i=1}^{q}d(0,\omega_{i}(0))+
\sum_{i=1}^{q}\ln|r_{i}|-2qC\leqslant l(\beta)\leqslant\sum_{i=1}^{q}d(0,\omega_{i}(0))+
\sum_{i=1}^{q}d(0,p^{r_i}(0)),
\end{equation}
and	
\begin{equation}\label{lengthcomp}
\sum_{i=1}^{q}d(0,\omega_{i}(0))-qC\leqslant l_{W}\leqslant\sum_{i=1}^{q}d(0,\omega_{i}(0)). 
\end{equation}
So the triple inequalities \ref{ineq}, \ref{lengthloop} and \ref{lengthcomp} give
\begin{equation*}
\frac{1-\frac{qC}{\sum_{i=1}^{q}d(0,\omega_{i}(0))}}{1+\frac{\sum_{i=1}^{q}d(0,p^{r_i}(0))}{
		\sum_{i=1}^{q}d(0,\omega_{i}(0))}+\frac{M_{1}}{\sum_{i=1}^{q}d(0,\omega_{i}(0))}}\leqslant
\frac{l_{W}}{l},
\end{equation*}

\begin{equation}
\frac{l_{W}}{l}\leqslant
\frac{1}{1+\frac{\sum_{i=1}^{q}d(0,p^{r_i}(0))}{\sum_{i=1}^{q}d(0,\omega_{i}(0))}
-\frac{2qC}{\sum_{i=1}^{q}d(0,\omega_{i}(0))}-\frac{M_{1}}{\sum_{i=1}^{q}d(0,\omega_{i}(0))}}.
\end{equation}
Since $ d(0,\omega_{i}(0))\geqslant d(0,h(0))=l(h) $ (we are supposing that there is at least a hyperbolic isometry $h$ in the coding of $\omega_i$, otherwise the limit point would not be radial), we have $ \frac{2qC}{\sum_{i=1}^{q}d(0,\omega_{i}(0))}\leqslant\frac{2C}{l(h)} $. Then, as $d(0,p^{r_i}(0))\sim 2\ln|r_i|$ when $q$ goes to infinity, we obtain 
$$ \frac{l_{W}}{l}\rightarrow 0 \Leftrightarrow  
\lim_{q\rightarrow+\infty}\frac{\sum_{i=1}^{q}d(0,\omega_{i}(0))}{\sum_{i=1}^{q}2\ln|r_{i}|}=0.
$$
Hence
$$ \xi\in\Lambda_{\infty} \Leftrightarrow  
\lim_{q\rightarrow+\infty}\frac{\sum_{i=1}^{q}d(0,\omega_{i}(0))}{\sum_{i=1}^{q}2\ln|r_{i}|}=0.
$$
\end{proof}
Before going on, let us exemplify that result.
\subsection{Examples}

	Let $\xi=\omega_{1}p^{r_{1}}\omega_{2}p^{r_{2}}...\omega_{q}p^{r_{q}}...\in\Lambda $ and $ \gamma_{q}=\omega_{1}p^{r_{1}}\omega_{2}p^{r_{2}}...\omega_{q}p^{r_{q}}\in\Gamma $.
 Consider the two following scenarios:
\begin{itemize}
	\item Case 1: $\xi=hphp^{2}hp^{3}hp^{4}...$ , then for every compact of the sequence $W_k$ we have that $\frac{\sum_{i=1}^{q}d(0,\omega_{i}(0))}{\sum_{i=1}^{q}2\ln|r_{i}|} \approx \frac{ql(h)}{\sum_1^q{2\ln i}}$, which converges to $0$ when $q$ goes to infinity. 
	\item Case 2: $\xi=hph^2p^{2}h^3p^{3}h^4p^{4}...$, then for every compact of the sequence $W_k$ we have that $\frac{\sum_{i=1}^{q}d(0,\omega_{i}(0))}{\sum_{i=1}^{q}2\ln|r_{i}|} \approx \frac{\sum_1^q i l(h)}{\sum_1^q{2\ln i}}=\frac{q^2(q+1)l(h)}{2ln(q!)}$, which diverges when $q$ goes to infinity. 
\end{itemize}
%(For Cheikh: both limits are pretty straight-forward by using Stirling's formula, but maybe it is worth writting)

We deduce from these examples that the set $ \Lambda_{\infty} $ is uncountable but it is a strict subset of $ \Lambda_{ur}\cup\Lambda_{p} $. As the Schottky group $\Gamma$ is a second kind group, this group is divergent, and then, the Lebesgue measure of its radial set is zero. Now, since $ \Lambda_{ur} \cup \Lambda_p $ is a subset of $\Lambda_r\cup \Lambda_p$ and this set also has zero Lebesgue measure, we deduce that so does $\Lambda_\infty$. Thus we aught to study its fractal.

\section{Hausdorff dimension of the diverging on average set}
In order to determine the size of the set $ \Lambda_{\infty} $ let us first define a distance on $ \mathbb{S}^{1} $.
\begin{defn}
Let $ \xi\in\mathbb{S}^{1} $ and $ z,\;z' $ two points of $ \mathbb{D} $. We call \textbf{Busemann cocycle} based on $ \xi $ between $ z $ and $ z' $ the quantity
$$ \beta_{\xi}(z,z'):=\lim_{t\rightarrow+\infty}(d(z,\xi_{t})-d(z',\xi_{t})) $$	where $ t\mapsto\xi_{t} $, is the parametrization of a geodesic ray ending at $ \xi $.
\end{defn}
\begin{defn}
	The \textbf{visual distance} $ d_{0} $ on $ \mathbb{S}^{1} $ is defined as
	$$ d_{0}(\xi,\eta)=\left\{
	\begin{array}{ll} e^{-<\xi,\eta>_{0}},\;\mbox{if}\;\;\xi\neq\eta,\\
	0,\;\mbox{if}\;\;\xi=\eta.
	\end{array}\right.$$
	where $ <\xi,\eta>_{0}=-\frac{1}{2}(\beta_{\xi}(0,z)+\beta_{\eta}(0,z)) $, for any $ z\in(\xi,\eta) $, called the Gromov product based on the point $ 0 $.
\end{defn}
For the following definitions and the proposition the reader is referred to Chapter 2 of Mattila's book \cite{matila} for details. 
\begin{defn}
Let $ F $ be a subset of $ \mathbb{S}^{1} $. For every $ \delta>0 $
\begin{enumerate}
	\item we call a $ \delta $-\textbf{cover} of $ F $ every collection of balls $ \{B_{i}(x_{i},r_{i})\}_{i\geqslant1} $ of $ \mathbb{S}^{1} $ such that $ r_{i}\leqslant\delta $ and $ F\subseteq\bigcup_{i=1}^{+\infty}B_{i} $.
	\item we define $ \mathcal{H}_{\delta}^{s}(F):=\inf\big\{\sum_{i=1}^{+\infty}r_{i}^{s}: \{B_{i}(x_{i},r_{i})\}_{i\geqslant1}\;\mbox{is a}\;\delta\mbox{-cover of}\;F \big\} $.
	\item The $s$-\textbf{dimensional Hausdorff measure} of $ F $ is given by 
		$$ \mathcal{H}^{s}(F):=\lim_{\delta\rightarrow0}\mathcal{H}_{\delta}^{s}(F). $$
\end{enumerate}
\end{defn}
$ \mathcal{H}^{s}(F) $ can take any value between $ 0 $ and $ \infty $. If $ \mathcal{H}^{s}(F) $ is finite then $ \mathcal{H}^{t}(F)=0 $ for every $ t>s $. So there is a critical value of $ s $ where $ \mathcal{H}^{s}(F) $ jumps from $ \infty $ to 0.
\begin{defn}
	We call \textbf{Hausdorff dimension} of a subset $ F $ of $ \mathbb{S}^{1} $ the critical value
	$$ dim_{H}(F):=\sup\{s:\mathcal{H}^{s}(F)=\infty\}=\inf\{s:\mathcal{H}^{s}(F)=0\}. $$
\end{defn}

\begin{prop}
	Let $ F\subset\mathbb{S}^{1} $. The following hold:
	\begin{enumerate}
		\item $ 0\leqslant dim_{H}(F)\leqslant1 $.
		\item If $ E\subseteq F $, then $ dim_{H}(E)\leqslant dim_{H}(F) $.
		\item If $ \{F_{n}\}_{n\geqslant1} $ is a countable sequence of sets of $ \mathbb{S}^{1} $ then
		$$ dim_{H}\big(\bigcup_{n\geqslant1}F_{n}\big)=\sup\{dim_{H}(F_{n}), n\geqslant1\}. $$
And so if $ F $ is countable $ dim_{H}(F)=0 $.	
\end{enumerate}	

\end{prop}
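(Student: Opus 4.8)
The plan is to derive all three items from the fact that each $\mathcal{H}^{s}$ is an outer measure on $\mathbb{S}^{1}$, together with the threshold property already recorded above (if $\mathcal{H}^{s}(F)<\infty$ then $\mathcal{H}^{t}(F)=0$ for every $t>s$). I would treat monotonicity first, since it feeds into the other two parts.

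\textbf{Monotonicity (item 2).} First I would observe that if $E\subseteq F$ then every $\delta$-cover of $F$ is automatically a $\delta$-cover of $E$. Taking the infimum over the (larger) family of covers of $F$ therefore yields $\mathcal{H}_{\delta}^{s}(E)\leqslant\mathcal{H}_{\delta}^{s}(F)$ for every $s$ and every $\delta>0$; letting $\delta\to 0$ gives $\mathcal{H}^{s}(E)\leqslant\mathcal{H}^{s}(F)$. Consequently $\{s:\mathcal{H}^{s}(F)=0\}\subseteq\{s:\mathcal{H}^{s}(E)=0\}$, and passing to infima gives $dim_{H}(E)\leqslant dim_{H}(F)$.

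\textbf{Range (item 1).} The lower bound $dim_{H}(F)\geqslant 0$ is immediate, since $s$ ranges over nonnegative reals. For the upper bound I would cover the whole circle $\mathbb{S}^{1}$ by $N$ balls of radius comparable to $1/N$ in the visual metric $d_{0}$ (which is bi-Lipschitz equivalent to the arc-length metric), so that $\sum_{i}r_{i}^{1}\leqslant c$ for a constant $c$ independent of $N$; letting $N\to\infty$ shows $\mathcal{H}^{1}(\mathbb{S}^{1})<\infty$. By monotonicity $\mathcal{H}^{1}(F)\leqslant\mathcal{H}^{1}(\mathbb{S}^{1})<\infty$, and then the threshold property forces $\mathcal{H}^{t}(F)=0$ for every $t>1$. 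Hence $dim_{H}(F)\leqslant 1$.

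\textbf{Countable stability (item 3).} For the inequality $\sup_{n}dim_{H}(F_{n})\leqslant dim_{H}(\bigcup_{n}F_{n})$ I would simply apply monotonicity to each inclusion $F_{n}\subseteq\bigcup_{m}F_{m}$. For the reverse inequality, fix any $s>\sup_{n}dim_{H}(F_{n})$; then $\mathcal{H}^{s}(F_{n})=0$ for every $n$, and countable subadditivity of the outer measure $\mathcal{H}^{s}$ gives $\mathcal{H}^{s}(\bigcup_{n}F_{n})\leqslant\sum_{n}\mathcal{H}^{s}(F_{n})=0$, whence $dim_{H}(\bigcup_{n}F_{n})\leqslant s$. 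Taking the infimum over such $s$ yields $dim_{H}(\bigcup_{n}F_{n})\leqslant\sup_{n}dim_{H}(F_{n})$, and the two bounds combine to the claimed equality. Finally, for a countable set $F=\{x_{n}\}_{n}$, a single ball of radius $\delta$ covers the singleton $\{x_{n}\}$ and gives $\mathcal{H}_{\delta}^{s}(\{x_{n}\})\leqslant\delta^{s}\to 0$ for every $s>0$, so $dim_{H}(\{x_{n}\})=0$; the countable-stability identity then gives $dim_{H}(F)=\sup_{n}0=0$.

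\textbf{Main obstacle.} None of the steps is genuinely deep; the only points needing a little care are establishing countable subadditivity of $\mathcal{H}^{s}$ (a standard $\varepsilon 2^{-n}$ argument applied to $\mathcal{H}_{\delta}^{s}$ before letting $\delta\to 0$) and checking that the visual distance $d_{0}$ is comparable to the standard metric on $\mathbb{S}^{1}$, which is precisely what makes $\mathcal{H}^{1}(\mathbb{S}^{1})$ finite in item 1.
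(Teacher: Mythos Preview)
Your proof is correct and follows the standard route (monotonicity of the outer measures $\mathcal{H}^{s}$, the threshold property, and countable subadditivity). The paper itself does not give a proof of this proposition at all: it simply refers the reader to Chapter~2 of Mattila's book for the definitions and for this proposition, so there is no in-text argument to compare against; your write-up is exactly the kind of verification one would extract from that reference.
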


Since $ \Lambda_{ur}^{\infty}=\Lambda_{\infty}\cap\Lambda_{ur} $ and the subset $ \Lambda_{p} $ is countable we have ought to determinate $ dim_{H}(\Lambda_{ur}^{\infty})$. We establish the theorem \ref{hausd} in two moments.

\subsection{The upper bound}
Let us cover the set $ \Lambda_{ur}^{\infty} $ by balls of $ \mathbb{S}^{1} $ indexed by a proper subset of the group $ \Gamma $. For this, we state the following two lemmas. For their proofs we refer to \cite{Bourdon} and \cite{BridsonHaefliger}.

%\textcolor{blue}{Do you think it is worth proving this lemma or just saying that it is the shadow lemma, like you did with Frostman's?}
\begin{lem}[Lemma 1.6.2 in \cite{Bourdon}]\label{kaim}
	Given $ r>0 $ and $ \gamma\in\Gamma $ there exists $ c(r)>1 $ such that we have
	$$ B(\xi_{0,\gamma (0)},c(r)^{-1}e^{-d(0,\gamma(0))})\subseteq
	\mathcal{O}(\gamma(0),r)\subseteq B(\xi_{0,\gamma (0)},c(r)e^{-d(0,\gamma(0))}), $$
	where $ \xi_{0,\gamma (0)} $ is the endpoint of the geodesic ray with origin $ 0 $ passing through $ \gamma_{q}(0) $ and 
	$$ \mathcal{O}(\gamma(0),r)=\{\eta\in\mathbb{S}^{1}:[0,\eta)\cap
	B(\gamma(0),r)\neq\emptyset\} $$
	it is the shadow at infinity from $ 0\in\mathbb{D} $ of the ball $ B(\gamma(0),r) $. And 
	$$ B(\xi_{0,\gamma (0)},c(r)e^{-d(0,\gamma(0))})=\{\eta\in\mathbb{S}^{1}:d_{0}(\eta,\xi_{0,\gamma (0)})<
	c(r)e^{-d(0,\gamma(0))}\}. $$
\end{lem}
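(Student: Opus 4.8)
The plan is to reduce the lemma to two elementary facts about $\mathbb{D}$ (with $0$ at its centre) and then to match constants. The first fact is the comparison of the visual distance with distance to a geodesic: there is a universal constant $C_{0}\geq1$ with $C_{0}^{-1}e^{-d(0,(\xi,\eta))}\leq d_{0}(\xi,\eta)\leq C_{0}e^{-d(0,(\xi,\eta))}$ for all $\xi\neq\eta$ in $\mathbb{S}^{1}$, where $(\xi,\eta)$ is the complete geodesic with those endpoints; equivalently $d_{0}$ is comparable to the angular distance on $\mathbb{S}^{1}$ seen from $0$. (For $\mathbb{H}^{2}$ this is a direct computation with the explicit form of the Busemann cocycle; in general it is classical for $\mathrm{CAT}(-1)$ spaces, and it is precisely the comparison used in \cite{Bourdon}, so I would simply quote it.) The second fact is a one-line hyperbolic trigonometry computation: for $\gamma\in\Gamma$ with $d(0,\gamma(0))>r$, a geodesic ray issued from $0$ meets $B(\gamma(0),r)$ if and only if the angle it makes at $0$ with the ray $[0,\xi_{0,\gamma(0)})$ is at most $\alpha$, where $\sin\alpha=\sinh r/\sinh d(0,\gamma(0))$. (Drop the perpendicular from $\gamma(0)$ to the ray; in the resulting right triangle with hypotenuse $[0,\gamma(0)]$ one has $\sinh(\text{leg opposite }0)=\sinh(d(0,\gamma(0)))\sin(\text{angle at }0)$, and the ray meets the ball iff that leg is $\leq r$.) Thus $\mathcal{O}(\gamma(0),r)=\{\eta\in\mathbb{S}^{1}:\widehat{(\eta,0,\xi_{0,\gamma(0)})}\leq\alpha\}$, and by the angle of parallelism relation $\sin\!\big(\tfrac12\widehat{(\eta,0,\xi_{0,\gamma(0)})}\big)=1/\cosh d\big(0,(\eta,\xi_{0,\gamma(0)})\big)$ this set equals $\{\eta:\ d(0,(\eta,\xi_{0,\gamma(0)}))\geq\rho_{0}\}$, with $\cosh\rho_{0}=1/\sin(\alpha/2)$.

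Next I would check that $\rho_{0}$ lies within a bounded distance of $d(0,\gamma(0))$, uniformly in $\gamma$. Writing $R=d(0,\gamma(0))\geq r$, we have $\sin\alpha=\sinh r/\sinh R$, so $\sin(\alpha/2)$ and $e^{-R}$ are comparable with constants depending only on $r$, hence $\cosh\rho_{0}$ is comparable to $e^{R}$, and therefore $|\rho_{0}-R|\leq C_{1}(r)$ for a constant $C_{1}(r)$ depending only on $r$ (the map $R\mapsto\rho_{0}(R)-R$ is continuous on $[r,\infty)$ and tends to the finite limit $-\log(\sinh r/2)$, hence is bounded). Combining with the first fact yields both inclusions simultaneously. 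If $\eta\in\mathcal{O}(\gamma(0),r)$ then $d(0,(\eta,\xi_{0,\gamma(0)}))\geq\rho_{0}\geq R-C_{1}(r)$, so $d_{0}(\eta,\xi_{0,\gamma(0)})\leq C_{0}e^{-(R-C_{1}(r))}=(C_{0}e^{C_{1}(r)})e^{-R}$. Conversely, if $d_{0}(\eta,\xi_{0,\gamma(0)})\leq c(r)^{-1}e^{-R}$ with $c(r)\geq C_{0}e^{C_{1}(r)}$, then $e^{-d(0,(\eta,\xi_{0,\gamma(0)}))}\leq C_{0}\,d_{0}(\eta,\xi_{0,\gamma(0)})\leq C_{0}c(r)^{-1}e^{-R}\leq e^{-C_{1}(r)}e^{-R}\leq e^{-\rho_{0}}$, hence $d(0,(\eta,\xi_{0,\gamma(0)}))\geq\rho_{0}$ and $\eta\in\mathcal{O}(\gamma(0),r)$. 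So the single constant $c(r)=C_{0}e^{C_{1}(r)}$ works for all $\gamma$ with $d(0,\gamma(0))>r$.

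Finally I would dispose of the degenerate range $d(0,\gamma(0))\leq r$ (in particular $\gamma=\mathrm{id}$, which is vacuous): there $\mathcal{O}(\gamma(0),r)=\mathbb{S}^{1}$, the left-hand inclusion is automatic, and the right-hand one holds as soon as $c(r)e^{-d(0,\gamma(0))}\geq c(r)e^{-r}$ exceeds the $d_{0}$-diameter of $\mathbb{S}^{1}$; so it suffices to enlarge $c(r)$ further so that also $c(r)>e^{r}\,\mathrm{diam}_{d_{0}}(\mathbb{S}^{1})$. I expect the real content of the proof to be this bookkeeping — producing a constant $c(r)$ uniform in $\gamma$ — together with the uniform bound $|\rho_{0}(R)-R|\leq C_{1}(r)$ over the whole range $R\in[r,\infty)$ rather than only for $R$ large; the geometric inputs themselves (the shadow is an angular disc of explicitly known radius, and $d_{0}\asymp e^{-d(0,\cdot)}$) are either immediate hyperbolic trigonometry or the standard comparison quoted from \cite{Bourdon}.
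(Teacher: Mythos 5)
Your proof is correct, but note that the paper does not prove this lemma at all: it is imported verbatim as Lemma 1.6.2 of \cite{Bourdon} (the authors explicitly refer to \cite{Bourdon} for the proof), where it is established for general CAT($-1$) spaces through the Gromov-product formalism. What you wrote is therefore a genuinely different, self-contained route specific to $\mathbb{D}$: you identify the shadow $\mathcal{O}(\gamma(0),r)$ as an exact angular disc of half-angle $\alpha$ with $\sin\alpha=\sinh r/\sinh d(0,\gamma(0))$ (right-triangle trigonometry), convert angles at $0$ into distances to geodesics via the angle-of-parallelism identity, and compare with the visual metric through $d_{0}(\xi,\eta)\asymp e^{-d(0,(\xi,\eta))}$; the uniform bound $|\rho_{0}(R)-R|\leq C_{1}(r)$ on all of $[r,\infty)$ then produces a single constant $c(r)$, and the range $d(0,\gamma(0))\leq r$ is absorbed by enlarging $c(r)$. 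This buys explicitness---in the disc model based at the centre one even has the exact identity $d_{0}(\xi,\eta)=1/\cosh d(0,(\xi,\eta))=\sin\bigl(\tfrac12\widehat{(\xi,0,\eta)}\bigr)$, so your $C_{0}$ can be taken equal to $2$ and all constants are computable---at the price of being tied to constant curvature, whereas the quoted lemma of Bourdon holds in any CAT($-1$) space, which is irrelevant here but is what the citation provides for free. Two small points to tidy: when $d(0,\gamma(0))=r$ the shadow is a half-circle rather than all of $\mathbb{S}^{1}$ (an open/closed boundary case that your enlargement of $c(r)$ handles anyway), and the degenerate case $\gamma(0)=0$, where $\xi_{0,\gamma(0)}$ is undefined, should simply be excluded, as you do.
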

%\begin{proof}
%	content...
%\end{proof}
%Let us state a result called Morse Lemma proved in a more general context, the hyperbolic metric spaces. a $ \lambda $-quasi-geodesic sits in $ \lambda^{2} $-neighborhood of a geodesic with same endpoints. We state the following lemma called Morse Lemma. For more details we refer to \cite{BridsonHaefliger}. 
Let $ \xi=\omega_{1}p^{r_{1}}\omega_{2}p^{r_{2}}...\omega_{q}p^{r_{q}}...\in\Lambda_{ur}^{\infty} $ and consider the piecewise geodesic curve $ \varUpsilon=([\gamma_{q+1}(0),\gamma_{q}(0)])_{q>0} $. This curve is $ C $-quasi-geodesic, where $ C $ is the constant of the corollary \ref{tid}. So we can state in our context the Morse Lemma established in a more general context, see \cite{BridsonHaefliger}, Chapter III.H, Theorem 1.13.
\begin{lem}\label{neighbor}.
There exists $ D>0 $ such that for any $ \xi=\omega_{1}p^{r_{1}}...\omega_{q}p^{r_{q}}...\in\Lambda_{ur}^{\infty} $ the piecewise geodesic curve $ \varUpsilon=([\gamma_{q+1}(0),\gamma_{q}(0)])_{q>0} $ is contained in a $ D $-neighborhood of the geodesic ray $ [0,\xi) $. 
\end{lem}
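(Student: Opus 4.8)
The plan is to read the statement off from the stability of quasi-geodesics (the Morse Lemma) in the hyperbolic plane, as the sentence preceding it announces. The space $\mathbb{D}$ is CAT($-1$), hence $\delta$-hyperbolic for some fixed $\delta>0$, so \cite{BridsonHaefliger}, Ch.~III.H, Thm~1.13 gives: for all $\lambda\ge1$, $\epsilon\ge0$ there is a constant $D'=D'(\delta,\lambda,\epsilon)$ such that every $(\lambda,\epsilon)$-quasi-geodesic ray lies in the $D'$-neighbourhood of the geodesic ray with the same starting point and the same endpoint at infinity. It therefore suffices to produce, at bounded Hausdorff distance from $\varUpsilon$, a $(\lambda,\epsilon)$-quasi-geodesic ray based at $0$, with $\lambda,\epsilon$ depending only on $\Gamma$, whose endpoint at infinity is $\xi$; the latter holds automatically once quasi-geodesicity is known, since the ray then converges to a single boundary point and contains the subsequence $\gamma_q(0)\to\xi$.

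I would establish the quasi-geodesic property — which is asserted in the text just above the statement, and is where the Schottky structure of $\Gamma$ is used — as follows. First refine $\varUpsilon$ to the piecewise geodesic ray $\varUpsilon'$ obtained by inserting, for each $q$, the vertex $\gamma_q\omega_{q+1}(0)$ between $\gamma_q(0)$ and $\gamma_{q+1}(0)$, and prepending $0=\gamma_0(0)$; the edges of $\varUpsilon'$ are then translates of the segments $[0,\omega_i(0)]$ and $[0,p^{r_i}(0)]$, so each has length at least $\ell_0:=\min_{g\in\mathcal{A}_1}d(0,g(0))>0$, a constant intrinsic to $\Gamma$. Next, at every vertex of $\varUpsilon'$ the incoming and outgoing edges, after translating the vertex to $0$, point in directions whose first letters in the alphabet $\mathcal{A}_2$ are powers of \emph{different} generators (the vertices of $\varUpsilon'$ are precisely the places where the coding of $\xi$ switches between a block $\omega_i$ and a block $p^{r_i}$); so by the argument of Corollary~\ref{tid} — namely the disjointness of the half-planes $D(\cdot)$ attached to distinct letters of $\mathcal{A}_2$, which is the ping-pong condition on $\Gamma$ — together with Lemma~\ref{tid1}, the turning angle there is at least $\theta_0=\min(\theta_1,\theta_2,\theta_3)>0$, equivalently the Gromov product at that vertex of its two neighbours is at most $C/2$. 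A piecewise geodesic all of whose edges have length $\ge\ell_0$ and all of whose turning angles are $\ge\theta_0$ is a local quasi-geodesic with uniform constants, and the local-to-global principle in $\delta$-hyperbolic spaces then promotes it to a global $(\lambda,\epsilon)$-quasi-geodesic with $\lambda,\epsilon$ depending only on $\ell_0,\theta_0,C,\delta$.

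Applying the Morse Lemma to $\varUpsilon'$ gives $\varUpsilon'\subset N_{D'}([0,\xi))$. To return to $\varUpsilon$, note that $\varUpsilon$ and $\varUpsilon'$ share the vertices $\gamma_q(0)$, and that for each $q$ Corollary~\ref{tid} yields $d(\gamma_q(0),\gamma_q\omega_{q+1}(0))+d(\gamma_q\omega_{q+1}(0),\gamma_{q+1}(0))\le d(\gamma_q(0),\gamma_{q+1}(0))+C$; thus the inserted vertex $\gamma_q\omega_{q+1}(0)$ has Gromov product at most $C/2$ with respect to $\gamma_q(0)$ and $\gamma_{q+1}(0)$, so by thinness of triangles in $\mathbb{D}$ the whole sub-arc of $\varUpsilon'$ from $\gamma_q(0)$ to $\gamma_{q+1}(0)$ stays within distance $C/2+4\delta$ of the edge $[\gamma_q(0),\gamma_{q+1}(0)]$ of $\varUpsilon$, and conversely. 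Hence $\varUpsilon$ and $\varUpsilon'$ are at Hausdorff distance at most $C/2+4\delta$, giving $\varUpsilon\subset N_D([0,\xi))$ with $D:=D'+C/2+4\delta$, independent of $\xi$. I expect the genuine difficulty to lie in the second paragraph, namely in making the local-to-global step precise: that the per-vertex angle bound together with the edge-length bound $\ell_0$ really does yield a quasi-geodesic with constants uniform over all $\xi\in\Lambda_{ur}^{\infty}$. This is exactly where one must use the geometry of the Dirichlet domain centered at $0$ and the strict positivity of $\theta_0$ coming from the Schottky hypothesis on $\Gamma$.
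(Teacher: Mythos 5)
Your skeleton is the same as the paper's: the paper also proves Lemma \ref{neighbor} by asserting that $\varUpsilon$ is a quasi-geodesic (with constant coming from Corollary \ref{tid}) and then quoting the stability of quasi-geodesics from \cite{BridsonHaefliger}, III.H, Theorem 1.13; your final assembly of $D$ and the comparison of $\varUpsilon$ with the refined path $\varUpsilon'$ (two points within bounded distance of a geodesic have their connecting segment within bounded distance of it) are fine. The gap is exactly at the step you yourself flag. The claim ``a piecewise geodesic all of whose edges have length $\geq\ell_0$ and all of whose vertex angles are $\geq\theta_0$ is a uniform quasi-geodesic'' is false, and the local-to-global principle cannot repair it at this scale: that principle upgrades $L$-local quasi-geodesics to global ones only when $L$ exceeds a threshold depending on $\delta$ and on the local quasi-geodesic constants, whereas your path is only known to be quasi-geodesic on subpaths of length comparable to $2\ell_0$, and $\ell_0=\min_{g\in\mathcal{A}_1}d(0,g(0))$ has no reason to exceed that threshold. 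Concretely, traverse repeatedly a right-angled regular pentagon in $\mathbb{D}$: every edge has a fixed positive length, every angle equals $\pi/2$, yet the resulting piecewise geodesic ray is bounded, hence not a quasi-geodesic. So edge-length and angle lower bounds alone never yield quasi-geodesicity; one needs the edges to be long relative to $C(\theta_0)$ and $\delta$ (equivalently, relative to the Gromov products at the vertices), a quantitative hypothesis your argument never secures.

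What actually forces $\varUpsilon$ to track $[0,\xi)$ is the global ping-pong structure, not the local turning angles: the vertices lie in the nested half-planes $\omega_1p^{r_1}\cdots D(\cdot)$, so the path cannot return to a region it has left, and iterating Lemma \ref{tid1} along arbitrary subwords gives $d(\gamma_j(0),\gamma_k(0))\geq\sum_{i=j+1}^{k}d(0,\omega_i p^{r_i}(0))-2(k-j)C(\theta_0)$ for all $j<k$, not merely for consecutive vertices. Even this is useful only when the block displacements dominate the constant, e.g. $d(0,p^{r_i}(0))\geq d(0,p^{N}(0))\sim 2\ln N$ and $d(0,\omega_i(0))\geq\ell(h)$ compared with $C(\theta_0)$ and $\delta$; this is the quantitative point that must be addressed (for instance by working with the block alphabet $\omega_i p^{r_i}$ and $N$ large, or by a direct nesting/shadowing argument as in \cite{dalbo}), and it is glossed over both in your second paragraph and, admittedly, in the paper's own one-line justification. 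If you replace your local angle criterion by the chain criterion ``Gromov product at each vertex at most $K=C(\theta_0)/2$ and each edge of length greater than $2K+2\delta$'', and verify the length hypothesis for the blocks you actually use, then your application of the Morse lemma and the $\varUpsilon$ versus $\varUpsilon'$ comparison go through; as written, the quasi-geodesicity step does not.
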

%\begin{proof}
%Let show that for every element $ w $ of the geodesic $ [0,\xi) $ we have $ d(w,\gamma) $ where $ \gamma $ designs here the piecewise geodesic curve $ ([\gamma_{q+1}(0),\gamma_{q}(0)])_{q>0} $.\\	
%Assume that $ \gamma $ is parameterized by arclength. The point $ xw $ is within the distance $ \log2 $ from some point $ z_{1} $ from one of the other two sides of the hyperbolic triangle with vertices $ 0 $, $ \gamma_{q}(0) $ and $ \gamma_{q+1}(0) $.	
%\end{proof}

%\begin{figure}[htbp]
%\begin{center}
%\includegraphics[scale=0.35]{neigh.pdf}\hspace{1cm}
%\caption{D-neighborhood of a geodesic ray in the Poincare disk}\label{neigh}
%\label{canal}
%	\end{center}
%\end{figure} 

\begin{figure}[htbp]
	\centering
	\begin{normalsize} 
		\def\svgwidth{\textwidth}
		%\def\svgwidth{20cm}
		%\def\svgscale{0.5}
		%% Creator: Inkscape 1.3.2 (091e20e, 2023-11-25, custom), www.inkscape.org
%% PDF/EPS/PS + LaTeX output extension by Johan Engelen, 2010
%% Accompanies image file 'dibujo5.pdf' (pdf, eps, ps)
%%
%% To include the image in your LaTeX document, write
%%   \input{<filename>.pdf_tex}
%%  instead of
%%   \includegraphics{<filename>.pdf}
%% To scale the image, write
%%   \def\svgwidth{<desired width>}
%%   \input{<filename>.pdf_tex}
%%  instead of
%%   \includegraphics[width=<desired width>]{<filename>.pdf}
%%
%% Images with a different path to the parent latex file can
%% be accessed with the `import' package (which may need to be
%% installed) using
%%   \usepackage{import}
%% in the preamble, and then including the image with
%%   \import{<path to file>}{<filename>.pdf_tex}
%% Alternatively, one can specify
%%   \graphicspath{{<path to file>/}}
%% 
%% For more information, please see info/svg-inkscape on CTAN:
%%   http://tug.ctan.org/tex-archive/info/svg-inkscape
%%
\begingroup%
  \makeatletter%
  \providecommand\color[2][]{%
    \errmessage{(Inkscape) Color is used for the text in Inkscape, but the package 'color.sty' is not loaded}%
    \renewcommand\color[2][]{}%
  }%
  \providecommand\transparent[1]{%
    \errmessage{(Inkscape) Transparency is used (non-zero) for the text in Inkscape, but the package 'transparent.sty' is not loaded}%
    \renewcommand\transparent[1]{}%
  }%
  \providecommand\rotatebox[2]{#2}%
  \newcommand*\fsize{\dimexpr\f@size pt\relax}%
  \newcommand*\lineheight[1]{\fontsize{\fsize}{#1\fsize}\selectfont}%
  \ifx\svgwidth\undefined%
    \setlength{\unitlength}{1995bp}%
    \ifx\svgscale\undefined%
      \relax%
    \else%
      \setlength{\unitlength}{\unitlength * \real{\svgscale}}%
    \fi%
  \else%
    \setlength{\unitlength}{\svgwidth}%
  \fi%
  \global\let\svgwidth\undefined%
  \global\let\svgscale\undefined%
  \makeatother%
  \begin{picture}(1,0.57393487)%
    \lineheight{1}%
    \setlength\tabcolsep{0pt}%
    \put(0,0){\includegraphics[width=\unitlength,page=1]{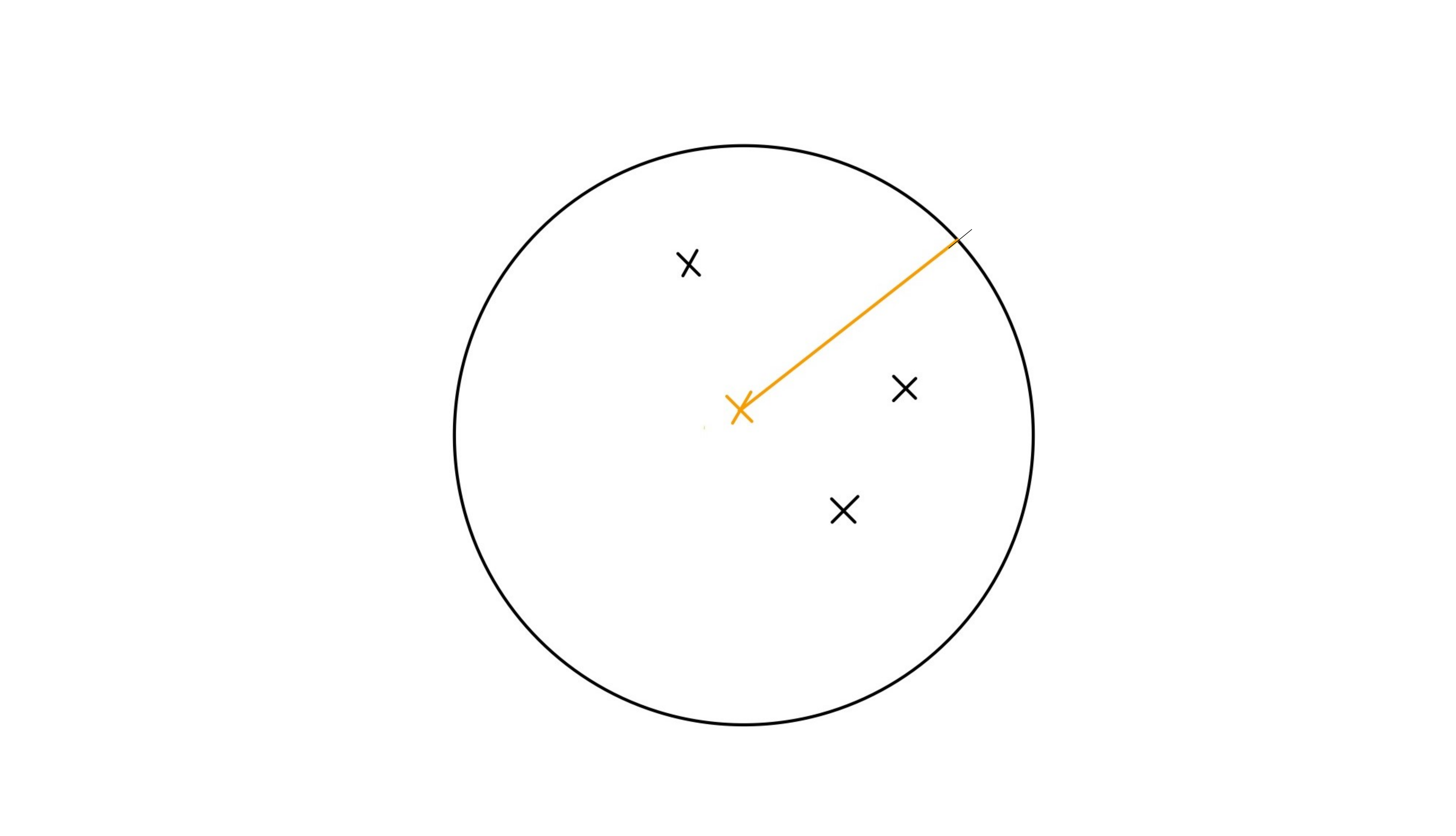}}%
    \put(0.49419446,0.39614096){\color[rgb]{0.05490196,0.05490196,0.05490196}\makebox(0,0)[lt]{\lineheight{1.25}\smash{\begin{tabular}[t]{l}$\gamma_{q-1}(0)$\end{tabular}}}}%
    \put(0.63933239,0.30542979){\color[rgb]{0.05490196,0.05490196,0.05490196}\makebox(0,0)[lt]{\lineheight{1.25}\smash{\begin{tabular}[t]{l}$\gamma_{q+1}(0)$\end{tabular}}}}%
    \put(0.60014515,0.22778102){\color[rgb]{0.05490196,0.05490196,0.05490196}\makebox(0,0)[lt]{\lineheight{1.25}\smash{\begin{tabular}[t]{l}$\gamma_q(0)$\end{tabular}}}}%
    \put(0.67561682,0.4215401){\color[rgb]{0.05490196,0.05490196,0.05490196}\makebox(0,0)[lt]{\lineheight{1.25}\smash{\begin{tabular}[t]{l}$\xi$\end{tabular}}}}%
    \put(0.48911468,0.27204808){\color[rgb]{0.05490196,0.05490196,0.05490196}\makebox(0,0)[lt]{\lineheight{1.25}\smash{\begin{tabular}[t]{l}$0$\end{tabular}}}}%
  \end{picture}%
\endgroup%

	\end{normalsize}
	\caption{$D$-neighborhood of a geodesic ray in the Poincare disc}
	\label{neigh}
\end{figure}

Let us $ \xi=\omega_{1}p^{r_{1}}...\omega_{q}p^{r_{q}}...\in\Lambda_{ur}^{\infty} $. We have $ \lim_{q\rightarrow+\infty}\frac{\sum_{i=1}^{q}
d(0,\omega_{i}(0))}{\sum_{i=1}^{q}2\ln|r_{i}|}=0 $. Then for every $ \varepsilon>0 $ there exists an integer $ q_{1} $ such that for any $ q\geqslant q_{1} $, 
\begin{equation}\label{ineq1}
\frac{\sum_{i=1}^{q}d(0,\omega_{i}(0))}{\sum_{i=1}^{q}2\ln|r_{i}|}\leqslant\varepsilon.
\end{equation}

In accordance with the notations established in Lemma \ref{kaim} denote by $ \xi_{0,\gamma_q (0)} $ the endpoint of the geodesic ray with origin $ 0 $ passing through $ \gamma_{q}(0) $. Given the constant $ r=D $, which is independent of $ \xi $, of the lemma \ref{neighbor}, we deduce from the lemma \ref{kaim} that there exists $ C_{2}=c(D)>0 $ such that $ \xi\in B(\xi_{0,\gamma_q (0)},C_{2}e^{-d(0,\gamma_{q}(0))}) $. And since $ \gamma_{q}(0) $ tends to $ \xi $, for any $ \delta>0 $ there exists an integer $ q_{2} $ such that for every $ q\geq q_{2} $ one has

\begin{equation}\label{ineq2}
2C_{2}e^{-d(0,\gamma_{q}(0))}\leqslant\delta. 
\end{equation}

In combining the inequalities \ref{ineq1} and \ref{ineq2}, for any $ \delta>0 $ we have
\begin{equation}\label{ineq3}
\frac{\sum_{i=1}^{q}d(0,\omega_{i}(0))}{\sum_{i=1}^{q}2\ln|r_{i}|}\leqslant\delta\;\;\mbox{and}\;\;
2C_{2}e^{-d(0,\gamma_{q}(0))}\leqslant\delta
\end{equation}
Now let us consider, for every positive integer $ N $ and real $ \delta>0 $, the subset of the group $ \Gamma $ denoted $ \Gamma_{\delta} $ defined as follows: an element $ \gamma\in\Gamma $ belongs on $ \Gamma_{\delta} $ iff there exists $ q\in\mathbb{N}^{\ast} $ such that
$$ \gamma=\gamma_{q}=\omega_{1}p^{r_{1}}...\omega_{q}p^{r_{q}}\;\;
\mbox{and verifies the inequations}\;\ref{ineq3}.$$
So for any $ \delta>0 $ the family 
$$ \Big\{B(\xi_{0,\gamma_q (0)},C_{2}e^{-d(0,\gamma_{q}(0))}):\gamma_{q}\in\Gamma_{\delta}\Big\}$$ 
constitutes a covering of the set $ \Lambda_{ur}^{\infty} $ with diameters less than $ \delta $. So for every $ s>0 $, we have $$ \mathcal{H}_{\delta}^{s}(\Lambda_{ur}^{\infty})\leqslant C_{2}^{s}
\sum_{\gamma\in\Gamma_{\delta}}e^{-sd(0,\gamma(0))}=C_{2}^{s}P_{\Gamma_{\alpha}^{\delta}}(s). $$
where $ P_{\Gamma_{\delta}}(s)=\sum_{\gamma\in\Gamma_{\delta}}e^{-sd(0,\gamma.0)} $. 
For every integer $ t $ consider the following subsets of $ \Gamma_{\delta} $ 
$$ A_{t}^{\delta}=\{\gamma\in\Gamma_{\delta}:\delta t\leqslant d(0,\gamma.0)\leqslant\delta(t+1)\}, $$
$ \Omega_{t}:= $ the set of terms $\omega$ on $ \gamma A_{t}^{\delta} $ such that 
$ \omega=h^{m_{1}}p^{n_{1}}...h^{m_{s}}p^{n_{s}} $ with $ |n_{i}|<N $\\
and\\
$ P_{t}:= $ the set of terms $ p^{r} $ on $ \gamma\in A_{t}^{\delta} $ of the form such that $ |r|\geq N $.\\
We have successively
\begin{align*}
P_{\Gamma_{\delta}}(s)
&=&\sum_{q=1}^{+\infty}\;\;\;\sum_{t=1}^{+\infty}\;\;\;\sum_{\gamma\in A_{t}^{\delta}}\;\;e^{-sd(0,\gamma.0)}\\
&\leqslant&\sum_{q=1}^{+\infty}\sum_{t=1}^{+\infty}\sum_{\gamma\in A_{t}^{\delta}}
exp\Big(2qsC-s\sum_{i=1}^{q}d(0,\omega_{i}.0)\Big)\times exp\Big(-s\sum_{i=1}^{q}d(0,p^{r_{i}}.0)\Big)\\
&=&\sum_{q=1}^{+\infty}\sum_{t=1}^{+\infty}e^{2qsC}\times
\Bigg(\sum_{\omega\in\Omega_{t}}e^{-sd(0,\omega.0)}\Bigg)^{q}
\Bigg(\sum_{p^{r}\in P_{t}}e^{-sd(0,p^{r}.0)}\Bigg)^{q}\\
&\leqslant&\sum_{q=1}^{+\infty}\sum_{t=1}^{+\infty}e^{2qsC}
\Bigg(\sum_{\omega\in\Omega_{t}}e^{-sl(h)}\Bigg)^{q}
\Bigg(\sum_{p^{r}\in P_{t}}e^{-s(1-\delta)}\Bigg)^{q}\\
&\leqslant&\sum_{q=1}^{+\infty}\sum_{t=1}^{+\infty}e^{2qsC}\times|\Omega_{t}|^{q}e^{-sql(h)}
\times|(P_{t})|^{q}e^{-sq(1-\delta)}
\end{align*}
where $ |B| $ designs the cardinal of a subset $ B $ of the group $ \Gamma $. The last two inequalities are obtained from $ l(h)\leqslant d(0,\omega.0) $, for every $ \omega\in\Omega_{t} $ and from $ t(1-\delta)\leqslant d(0,p^{r}.0) $, for every $ p^{r}\in P_{t} $ since 
$ \sum_{i=1}^{q}d(0,\omega_{i}.0)\leqslant\delta d(0,\gamma.0) $.\\
Now let be $ \varepsilon>0 $ and put $ s_{0}=\frac{1}{2}+\varepsilon $. Since there exist constants $ C_{3} $ and $ C_{4} $ such that $ |\Omega_{t}|\leqslant C_{3}e^{\delta(t+1)\delta_{\Gamma}} $ and 
$ |P_{t}|\leqslant C_{4}e^{\frac{1}{2}\delta(t+1)} $ we have so,
$$ P_{\Gamma_{\alpha}^{\delta}}(s_{0})\leqslant C_{3}C_{4}\sum_{q=1}^{+\infty}e^{k\beta}\sum_{t=1}^{+\infty}
e^{-tk\lambda} $$
where $ \lambda=s_{0}(1-\delta)-\delta(\delta_{\Gamma}+\frac{1}{2}) $ and
$ \beta=\delta(\delta_{\Gamma}+\frac{1}{2})-s_{0}(l(h)-2C) $. One verifies that $ \lambda>0 $. So
$$ \sum_{t=1}^{+\infty}e^{-tk\lambda}=\frac{e^{-k\lambda}}{1-e^{-k\lambda}}. $$
And this implies that 
$$ P_{\Gamma_{\alpha}^{\delta}}(s_{0})\leqslant C_{3}C_{4}\sum_{q=1}^{+\infty}
\frac{e^{-q(\lambda-\beta)}}{1-e^{-q\lambda}}. $$
We have $ \lambda-\beta\geq s_{0}(1+l(h)-2C)-4\delta $. So for $ \delta $ enough small if $ s_{0}(1+l(h)-c)>0 $ then $ \lambda-\beta>0 $. In this case we put 
$ U_{q}=\frac{e^{-q(\lambda-\beta)}}{1-e^{-q\lambda}} $ and verify that 
$ \frac{U_{q+1}}{U_{q}}\leqslant e^{-(\lambda-\beta)}<1 $. Alembert's ruler implies that the series
$ \sum_{q=1}^{+\infty}U_{q}$ converges. Thus for every $ \delta>0 $,
$$ \mathcal{H}^{s_{0}}_{\delta}(\Lambda_{ur}^{\infty})\leqslant C_{3}C_{4}(2C_{2})^{s_{0}}
\sum_{q=1}^{+\infty}U_{q}<\infty. $$
Since in the above inequality the member of the left is independent of $ \delta $, we have 
This implies that $ \mathcal{H}^{s_{0}}(\Lambda_{ur}^{\infty})<0$. And hence $ dim_{H}(\Lambda_{ur}^{\infty})\leqslant s_{0}=\frac{1}{2}+\varepsilon$, for every $ \varepsilon>0 $. Therefore
\begin{equation}\label{upper}
dim_{H}(\Lambda_{ur}^{\infty})\leqslant\frac{1}{2}.
\end{equation}
\subsection{The lower bound}
A good way to obtain a lower-bound passes by using the following lemma, called \textbf{Frostman's Lemma}.
\begin{lem}
	Let $ F $ be a Borel set of $ \mathbb{S}^{1} $. Let $ \mu $ be a positive measure on $ F $ and assume that for some $ s>0 $ there exist constant $ C>0 $ such that for every $ \xi\in\mathbb{S}^{1} $ and $ r>0 $, we have
	$$ \mu(B(\xi,r))\leqslant Cr^{s}. $$
	Then $ \mathcal{H}^{s}(F)\geqslant\frac{\mu(F)}{c} $ and so $$ s\leqslant dim_{H}(F). $$
\end{lem}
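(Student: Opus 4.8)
The plan is to prove the \textbf{mass distribution principle}, which is precisely the statement of the lemma, and then to deduce the dimension bound directly from the definition of Hausdorff dimension given above.

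First I would fix $\delta>0$ and let $\{B_i(x_i,r_i)\}_{i\geqslant1}$ be an arbitrary $\delta$-cover of $F$ by balls of $\mathbb{S}^1$, exactly as in the definition of $\mathcal{H}_\delta^s$. Since $F\subseteq\bigcup_{i}B_i$ and $\mu$ is a positive (countably subadditive) measure, applying the hypothesis $\mu(B(\xi,r))\leqslant Cr^s$ to each ball gives
$$ \mu(F)\leqslant\mu\Big(\bigcup_{i=1}^{+\infty}B_i\Big)\leqslant\sum_{i=1}^{+\infty}\mu(B_i)\leqslant\sum_{i=1}^{+\infty}Cr_i^s=C\sum_{i=1}^{+\infty}r_i^s. $$
Note that the hypothesis is phrased for balls, which matches the paper's definition of a $\delta$-cover, so no translation between diameters and radii is needed.

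Next, since $\mu(F)$ does not depend on the chosen cover, I would take the infimum over all $\delta$-covers of $F$ to obtain $\mu(F)\leqslant C\,\mathcal{H}_\delta^s(F)$. This holds for every $\delta>0$, and $\mathcal{H}_\delta^s(F)$ is non-decreasing as $\delta\to0$; letting $\delta\to0$ yields $\mu(F)\leqslant C\,\mathcal{H}^s(F)$, i.e. $\mathcal{H}^s(F)\geqslant\mu(F)/C$ (with the harmless lowercase $c$ in the statement standing for $C$).

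Finally, for the dimension estimate: if $\mu(F)>0$ then $\mathcal{H}^s(F)>0$, in particular $\mathcal{H}^s(F)\neq 0$. Since $\mathcal{H}^t(F)=0$ for every $t>dim_H(F)$ and $dim_H(F)=\inf\{t:\mathcal{H}^t(F)=0\}$, it follows that $s\leqslant dim_H(F)$. There is essentially no genuine obstacle in this lemma itself; the only point deserving care is that it is vacuous unless $\mu(F)>0$, so the real content of the lower bound for $dim_H(\Lambda_{ur}^{\infty})$ will be the \emph{construction} of a positive measure supported on $\Lambda_{ur}^{\infty}$ satisfying the ball estimate with exponent $s=\frac{1}{2}$ — that, and not the present lemma, is where the work lies.
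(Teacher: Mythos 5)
Your proof is correct: the paper states this lemma (as Frostman's Lemma / the mass distribution principle) without giving any proof, and your argument --- bounding $\mu(F)$ by $C\sum_i r_i^s$ over an arbitrary $\delta$-cover, taking the infimum, letting $\delta\to0$, and then deducing $s\leqslant \dim_H(F)$ from $\mathcal{H}^s(F)>0$ when $\mu(F)>0$ --- is exactly the standard proof the paper implicitly relies on. Your remark that the conclusion is vacuous unless $\mu(F)>0$, so that the real work lies in constructing the measure on $\Lambda_{ur}^{\infty}$, is also the correct reading of how the lemma is used later in the paper.
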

We construct in three steps a subset $ E $ of $ \Lambda_{ur}^{\infty} $ on which we apply Frostman's Lemma.\\ 
\textbf{\underline{Step 1: Choose of sub-alphabet}}\\\\ 
Consider the alphabet $ \mathcal{A}_{1}=\{h^{\pm1}, p^{\pm1}\} $. Let 
$ S^{+}=\{s=(s_{n})_{n\geq1}/s_{n}\in\mathcal{A}_{1}, s_{n+1}\neq s_{n}^{-1}\;
\mbox{and if }\;s_{n}=p^{\pm1},\exists\;m>n\;\mbox{with}\;s_{m}\neq s_{n}\} $. The map which sends every sequence $ s=(s_{n})_{n\geq1}\in S^{+} $ to the point $ \xi\in\Lambda_{r} $ defined by $ \xi=\lim_{n\rightarrow+\infty}\gamma_{n}^{\xi}(0) $ where $ \gamma_{n}^{\xi}=s_{1}s_{2}...s_{n} $, is into one-to-one correspondence. We put $ lenght(\gamma_{n}^{\xi})=l(\gamma_{n}^{\xi})=n $ in the alphabet $ \mathcal{A}_{1} $.\\
Consider the subset $ \Lambda_{ur}^{\infty,+} $ consisting of elements of $ \Lambda_{ur}^{\infty}\subset\Lambda_{r} $ such that any term of their unique associated sequence in $ S $ does not equal to $ p^{-1} $. In other words the elements of $ S^{+} $ associated to the ones of 
$ \Lambda_{ur}^{\infty,+} $ are written in the alphabet $ \mathcal{A}_{1}^{+}=\{h^{\pm1}, p\} $.\\
Even it means choosing the positive integer $ n $ larger enough we rewrite for every $ \xi\in\Lambda_{ur}^{\infty,+} $ the isometry $ \gamma_{n}^{\xi} $ in the alphabet 
$ \mathcal{A}_{2}=\{h^{\mathbb{Z}^{\star}}, p^{\mathbb{N}}\} $:
$$ \gamma_{n}^{\xi}=\gamma_{l}^{\xi}=c_{1}...c_{l}. $$ 
Define $$ \Gamma_{E}:=\{\gamma=c_{1}...c_{l}\;/\;c_{i}\in\mathcal{A}_{2}\} $$ put 
$$ A_{n}=\{z\in\mathbb{D}\;/\;n\leqslant d(0,z)<n+1\}, $$ for every $ n\geq1 $ and set $$ q=\min d(s_{i}.0,s_{j}.0) $$ where $ s_{i} $ and $ s_{j} $ belong in 
$ \mathcal{A}_{1}=\{h^{\pm1}, p^{\pm1}\} $. We claim the following two lemmas
\begin{lem}\label{disjoint}
According to the notations of Lemma \ref{kaim} (see Figure \ref{dibujo11}) there exists a constant $ C_{5} $ such that for every two distinct points $ z $ and $ z' $ of $ A_{n}\cap\Gamma_{E}.0 $ verifying $ d(z,z')\geq q $, the balls $ B(\xi_{0,z},C_{5}e^{-d(0,z)}) $ and $ B(\xi_{0,z'},C_{5}e^{-d(0,z')}) $ are disjoint.
\end{lem}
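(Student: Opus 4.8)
The plan is to reduce the separation of shadows to a lower bound on the Gromov product $\langle \xi_{0,z},\xi_{0,z'}\rangle_0$, and to obtain that bound from the geometry of the Ford domain together with Lemma \ref{kaim}. First I would recall that, by Lemma \ref{kaim}, for the fixed radius $r=D$ there is a constant $c=c(D)>1$ with $\mathcal{O}(z,D)\subseteq B(\xi_{0,z},c\,e^{-d(0,z)})$ and $B(\xi_{0,z},c^{-1}e^{-d(0,z)})\subseteq\mathcal{O}(z,D)$, and similarly for $z'$. So it suffices to show that for a suitably small constant $C_5$ (to be chosen in terms of $c(D)$ and the angle data $\theta_0$, $q$) the two balls $B(\xi_{0,z},C_5 e^{-d(0,z)})$ and $B(\xi_{0,z'},C_5 e^{-d(0,z')})$ do not meet. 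In the visual metric $d_0$, two balls $B(\eta,\rho)$ and $B(\eta',\rho')$ are disjoint as soon as $d_0(\eta,\eta')>\rho+\rho'$, i.e. as soon as $e^{-\langle\eta,\eta'\rangle_0}>\rho+\rho'$.

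Next I would bound $\langle \xi_{0,z},\xi_{0,z'}\rangle_0$ from above. The key point is that $z$ and $z'$ lie in distinct translates of the Ford domain whose first letters $c_1\neq c_1'$ are distinct letters of $\mathcal{A}_1$ (this is exactly the set-up engineered in Corollary \ref{tid}: $z\in D(c_1)$, $z'\in D(c_1')$ with $D(c_1)\cap D(c_1')=\emptyset$), so the geodesic rays $[0,\xi_{0,z})$ and $[0,\xi_{0,z'})$ leave the origin making an angle at least $\theta_0=\min_i\theta_i>0$. Combined with the hypothesis $d(z,z')\geq q$, an application of Lemma \ref{tid1} to the triangle $(0,z,z')$ gives
$$
d(0,z)+d(0,z')-C(\theta_0)\leqslant d(z,z').
$$
On the other hand $\langle z,z'\rangle_0=\tfrac12\big(d(0,z)+d(0,z')-d(z,z')\big)$, so this inequality forces $\langle z,z'\rangle_0\leqslant \tfrac12 C(\theta_0)$; and since the visual Gromov product of the endpoints is comparable to the Gromov product of the points up to a universal additive constant (the standard $\delta$-hyperbolicity estimate $\langle\xi_{0,z},\xi_{0,z'}\rangle_0\leqslant\langle z,z'\rangle_0 + O(1)$, valid because the rays pass within bounded distance of $z,z'$ by Lemma \ref{neighbor}), we obtain $\langle\xi_{0,z},\xi_{0,z'}\rangle_0\leqslant \tfrac12 C(\theta_0)+C'$ for a universal $C'$. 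Hence $d_0(\xi_{0,z},\xi_{0,z'})\geqslant e^{-\frac12 C(\theta_0)-C'}=:2c_0>0$, a constant independent of $z,z'$ and of $n$.

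Finally I would close the argument by a size comparison. If $z,z'\in A_n$ then $d(0,z),d(0,z')< n+1$, so $e^{-d(0,z)},e^{-d(0,z')}>e^{-(n+1)}$; this is too crude on its own, so instead I keep the radii $C_5 e^{-d(0,z)}$ and $C_5 e^{-d(0,z')}$ and simply note $C_5 e^{-d(0,z)}+C_5 e^{-d(0,z')}\leqslant 2C_5 e^{-n}\cdot e$ is not what is needed either; the clean way is to observe the separation $d_0(\xi_{0,z},\xi_{0,z'})\geqslant 2c_0$ is an \emph{absolute} constant while the radii are at most $C_5$ (since $d(0,z)\geqslant n\geqslant 1>0$ gives $e^{-d(0,z)}\leqslant 1$), so choosing $C_5\leqslant c_0$ already yields $C_5 e^{-d(0,z)}+C_5 e^{-d(0,z')}\leqslant 2C_5\leqslant 2c_0\leqslant d_0(\xi_{0,z},\xi_{0,z'})$ with strict inequality after shrinking $C_5$ slightly, which gives disjointness. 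The main obstacle, and the step deserving the most care, is the passage from the Gromov product of the \emph{points} $z,z'$ to that of the \emph{boundary endpoints} $\xi_{0,z},\xi_{0,z'}$: this is where $\delta$-hyperbolicity of $\mathbb{D}$ and the uniform fellow-travelling from Lemma \ref{neighbor} (so that the defining rays do not wander far from $z$, $z'$) must be invoked, and one should double-check that the resulting additive error is genuinely uniform in $\xi$, which it is because $D$ and $\delta$ do not depend on $\xi$.
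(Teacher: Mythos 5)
Your general frame --- reduce disjointness to showing that $d_0(\xi_{0,z},\xi_{0,z'})$ exceeds the sum of the two radii, and control this visual distance through the Gromov product $\langle z,z'\rangle_0=\tfrac{1}{2}\bigl(d(0,z)+d(0,z')-d(z,z')\bigr)$ --- is the right one, and it is essentially what the paper's very terse proof gestures at via the angle $\theta=\widehat{(z,0,z')}$ and the arc length $2\sin(\theta/2)$. But your key step is not available: you assume that the words representing $z$ and $z'$ begin with distinct letters $c_1\neq c_1'$, so that the rays $[0,z)$ and $[0,z')$ make an angle at least $\theta_0$, and then Lemma \ref{tid1} gives $d(0,z)+d(0,z')-C(\theta_0)\leqslant d(z,z')$ and a separation $d_0(\xi_{0,z},\xi_{0,z'})\geqslant 2c_0$ by an \emph{absolute} constant independent of $n$. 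The lemma has no such hypothesis: it concerns arbitrary distinct $z,z'\in A_n\cap\Gamma_{E}.0$ with $d(z,z')\geqslant q$, and in the intended application (distinct points of $\mathcal{T}_n$) the two orbit points typically share a long common prefix, e.g.\ $z=\gamma a(0)$, $z'=\gamma b(0)$ with $a\neq b$ in $\mathcal{A}_1$ and $\gamma$ a long word. For such a pair one has $d(z,z')=d(a(0),b(0))\geqslant q$, yet $\langle z,z'\rangle_0\approx d(0,\gamma(0))\approx n$, the angle at $0$ is of order $e^{-n}$, and $d_0(\xi_{0,z},\xi_{0,z'})\asymp e^{-n}\to 0$. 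So both your inequality $d(0,z)+d(0,z')-C(\theta_0)\leqslant d(z,z')$ (whose left-hand side is about $2n$ while the right-hand side may equal $q$) and the claimed $n$-independent lower bound on the separation are false; the final choice $C_5\leqslant c_0$ therefore rests on a step that fails exactly in the cases the lemma is meant to handle.

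What makes the lemma true is the hypothesis you did not exploit: both points lie in the \emph{same} annulus $A_n$, so the radii are at most $C_5e^{-n}$ and one only needs a separation of order $e^{-n}$, not a constant. From $d(z,z')\geqslant q$ and $n\leqslant d(0,z),d(0,z')<n+1$ one gets
$$\langle z,z'\rangle_0\leqslant n+1-\frac{q}{2}\leqslant\min\{d(0,z),d(0,z')\}+1-\frac{q}{2},$$
and then, by the standard comparison between the Gromov product of the points and that of the endpoints of the rays through them (a bounded additive error coming from the hyperbolicity of $\mathbb{D}$ --- this is the only place hyperbolicity is needed; Lemma \ref{neighbor} plays no role here),
$$d_0(\xi_{0,z},\xi_{0,z'})\geqslant c\,e^{\frac{q}{2}-1}\,e^{-\min\{d(0,z),d(0,z')\}},$$
for a universal $c>0$, while the two radii sum to at most $2C_5e^{-\min\{d(0,z),d(0,z')\}}$. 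Choosing $C_5<\frac{c}{2}e^{\frac{q}{2}-1}$ gives disjointness uniformly in $n$ and in the pair. This is also precisely the uniformity that the paper's own one-line proof leaves implicit when it sets $C_5=\frac{1}{3}\,d\,e^{\min\{d(0,z),d(0,z')\}}$ with $d=2\sin(\theta/2)$ depending on the pair: the content of the lemma is that this quantity is bounded below by a positive constant, and that bound comes from $d(z,z')\geqslant q$ together with membership in the same annulus, not from any angle bounded away from zero at the origin.
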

\begin{proof}
Since $ 0<q<d(z,z') $, then $ \theta=\widehat{(z,0,z')}\neq0 $. Design $ d $ the length of arc with endpoints $ \xi_{0,z} $ and $ \xi_{0,z'} $. We have $d=2\sin\Big(\frac{\theta}{2}\Big)$. It suffices to set $$ C_{5}=\frac{1}{3}d\times e^{\min\{d(0,z),d(0,z')\}}. $$ 
\end{proof}

\begin{figure}[htbp]
	\centering
	\begin{normalsize} 
		\def\svgwidth{\textwidth}
		%\def\svgwidth{11cm}
		%\def\svgscale{0.5}
		%% Creator: Inkscape 1.3.2 (091e20e, 2023-11-25, custom), www.inkscape.org
%% PDF/EPS/PS + LaTeX output extension by Johan Engelen, 2010
%% Accompanies image file 'dibujo11.pdf' (pdf, eps, ps)
%%
%% To include the image in your LaTeX document, write
%%   \input{<filename>.pdf_tex}
%%  instead of
%%   \includegraphics{<filename>.pdf}
%% To scale the image, write
%%   \def\svgwidth{<desired width>}
%%   \input{<filename>.pdf_tex}
%%  instead of
%%   \includegraphics[width=<desired width>]{<filename>.pdf}
%%
%% Images with a different path to the parent latex file can
%% be accessed with the `import' package (which may need to be
%% installed) using
%%   \usepackage{import}
%% in the preamble, and then including the image with
%%   \import{<path to file>}{<filename>.pdf_tex}
%% Alternatively, one can specify
%%   \graphicspath{{<path to file>/}}
%% 
%% For more information, please see info/svg-inkscape on CTAN:
%%   http://tug.ctan.org/tex-archive/info/svg-inkscape
%%
\begingroup%
  \makeatletter%
  \providecommand\color[2][]{%
    \errmessage{(Inkscape) Color is used for the text in Inkscape, but the package 'color.sty' is not loaded}%
    \renewcommand\color[2][]{}%
  }%
  \providecommand\transparent[1]{%
    \errmessage{(Inkscape) Transparency is used (non-zero) for the text in Inkscape, but the package 'transparent.sty' is not loaded}%
    \renewcommand\transparent[1]{}%
  }%
  \providecommand\rotatebox[2]{#2}%
  \newcommand*\fsize{\dimexpr\f@size pt\relax}%
  \newcommand*\lineheight[1]{\fontsize{\fsize}{#1\fsize}\selectfont}%
  \ifx\svgwidth\undefined%
    \setlength{\unitlength}{891bp}%
    \ifx\svgscale\undefined%
      \relax%
    \else%
      \setlength{\unitlength}{\unitlength * \real{\svgscale}}%
    \fi%
  \else%
    \setlength{\unitlength}{\svgwidth}%
  \fi%
  \global\let\svgwidth\undefined%
  \global\let\svgscale\undefined%
  \makeatother%
  \begin{picture}(1,0.54208754)%
    \lineheight{1}%
    \setlength\tabcolsep{0pt}%
    \put(0,0){\includegraphics[width=\unitlength,page=1]{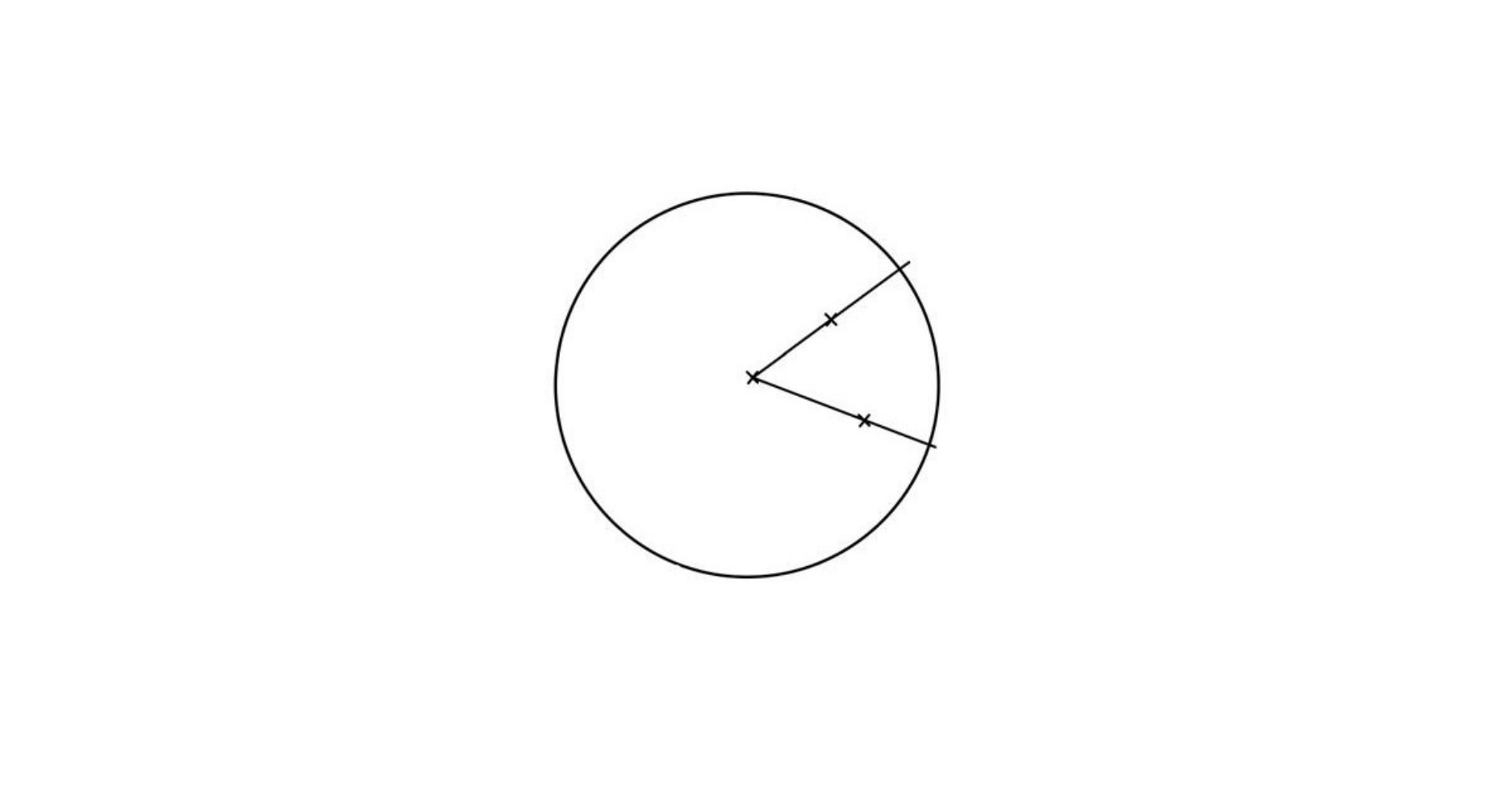}}%
    \put(0.63280115,0.36139088){\color[rgb]{0.05490196,0.05490196,0.05490196}\makebox(0,0)[lt]{\lineheight{1.25}\smash{\begin{tabular}[t]{l}$\xi_{0,z}$\end{tabular}}}}%
    \put(0.63497822,0.23729798){\color[rgb]{0.05490196,0.05490196,0.05490196}\makebox(0,0)[lt]{\lineheight{1.25}\smash{\begin{tabular}[t]{l}$\xi_{0,z'}$\end{tabular}}}}%
    \put(0.5471698,0.3396202){\color[rgb]{0.05490196,0.05490196,0.05490196}\makebox(0,0)[lt]{\lineheight{1.25}\smash{\begin{tabular}[t]{l}$z$\end{tabular}}}}%
    \put(0.55805517,0.22858974){\color[rgb]{0.05490196,0.05490196,0.05490196}\makebox(0,0)[lt]{\lineheight{1.25}\smash{\begin{tabular}[t]{l}$z'$\end{tabular}}}}%
    \put(0.47387521,0.29390177){\color[rgb]{0.05490196,0.05490196,0.05490196}\makebox(0,0)[lt]{\lineheight{1.25}\smash{\begin{tabular}[t]{l}$0$\end{tabular}}}}%
  \end{picture}%
\endgroup%

	\end{normalsize}
	\caption{Notation for $\xi_{0,z}$ and $\xi_{0,z'}$.}
	\label{dibujo11}
\end{figure}

\begin{lem}\label{diverge}
For every $ \varepsilon>0 $, we have
$$ \limsup_{n\rightarrow+\infty}e^{-n(\frac{1}{2}-\varepsilon)}card\{\gamma\in\Gamma_{E}\;/\;
\gamma.0\in A_{n}\}=\infty.  $$
\end{lem}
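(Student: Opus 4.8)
The plan is to bound $\mathrm{card}\{\gamma\in\Gamma_E:\gamma.0\in A_n\}$ from below by $c_0\,e^{n/2}$ for all large $n$, from which the statement is immediate: multiplying by $e^{-n(\frac12-\varepsilon)}$ leaves $c_0\,e^{n\varepsilon}\to+\infty$, so the $\limsup$ (in fact the limit) is $+\infty$. The key point is that $\Gamma_E$ already contains the sub-semigroup $\{p^{r}:r\in\mathbb N^{\ast}\}$, since each $p^{r}$ is a one-letter word over $\mathcal A_2=\{h^{\mathbb Z^{\ast}},p^{\mathbb N}\}$; moreover the orbit points $p^{r}(0)$, $r\geqslant1$, are pairwise distinct because a parabolic isometry has no fixed point in $\mathbb D$. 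Hence it suffices to estimate $\mathrm{card}\{r\in\mathbb N^{\ast}:p^{r}(0)\in A_n\}=\mathrm{card}\{r\in\mathbb N^{\ast}:n\leqslant d(0,p^{r}(0))<n+1\}$ from below.

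To do this I would pass to the upper half-plane model via $\varphi$, in which $p$ is conjugate to the translation $z\mapsto z+1$. The cusp-crossing computations of \S\ref{cuscros} (together with Proposition~\ref{smalength}) give the precise asymptotics $d(0,p^{r}(0))=2\ln r+b+o(1)$ as $r\to+\infty$, where $b$ is a fixed constant depending only on the height of $\varphi(0)$. Consequently, up to a uniformly small error, the condition $p^{r}(0)\in A_n$ holds for every integer $r$ in an interval of the form $\bigl[e^{(n-b)/2}(1+o(1)),\,e^{(n+1-b)/2}(1+o(1))\bigr]$, whose endpoints have a fixed ratio $e^{1/2}>1$; such an interval contains at least a fixed positive multiple of $e^{n/2}$ integers once $n$ is large. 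This yields $\mathrm{card}\{\gamma\in\Gamma_E:\gamma.0\in A_n\}\geqslant c_0\,e^{n/2}$ for all $n\geqslant n_0$, and the lemma follows as above.

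Conceptually the lemma is just the statement that the critical exponent of the parabolic subgroup $\langle p\rangle$ is $\tfrac12$: this is exactly what forces the number of parabolic orbit points in a unit-width shell to grow like $e^{n/2}$, which dominates $e^{n(\frac12-\varepsilon)}$ for every $\varepsilon>0$. I do not expect a genuine obstacle; the only steps needing a little care are quoting the asymptotic $d(0,p^{r}(0))=2\ln r+O(1)$ correctly from the cusp estimates, and the elementary counting of integers in an interval $[A,\kappa A]$ with $A=e^{(n-b)/2}\to+\infty$ and $\kappa=e^{1/2}$ fixed. If one prefers the witnessing elements to carry a hyperbolic letter (useful when combining with the construction of the Cantor set $E$), the same argument runs verbatim with the family $\{h\,p^{r}:r\in\mathbb N^{\ast}\}\subseteq\Gamma_E$, since prepending the letter $h$ changes the quantity $d(0,p^{r}(0))$ by at most the bounded amount $l(h)$.
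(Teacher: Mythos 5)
Your proposal is correct, but it takes a more direct route than the paper. The paper argues by contradiction at the level of Poincar\'e series: since $\Gamma_{E}$ contains the positive powers of $p$ and $d(0,p^{r}(0))\sim 2\ln r$, the series $\sum_{\gamma\in\Gamma_{E}}e^{-\frac{1}{2}d(0,\gamma.0)}$ diverges; decomposing this series over the annuli $A_{n}$ and noting that a uniform bound $A$ on the shell sums $\sum_{\gamma.0\in A_{n}}e^{-(\frac12-\varepsilon)d(0,\gamma.0)}$ would force convergence (each shell contributing an extra factor $e^{-\varepsilon n}$), it concludes that these shell sums are unbounded, whence the stated $\limsup$ is infinite. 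You instead count directly: using the same containment $\{p^{r}\}\subset\Gamma_{E}$ (which the paper also invokes) and the sharper asymptotic $d(0,p^{r}(0))=2\ln r+b+o(1)$, you show each annulus $A_{n}$ contains at least $c_{0}e^{n/2}$ parabolic orbit points for all large $n$, so after multiplying by $e^{-n(\frac12-\varepsilon)}$ you get a genuine limit equal to $+\infty$, not merely a $\limsup$. Both arguments hinge on the same fact that the critical exponent of $\langle p\rangle$ is $\frac12$; yours is more elementary and yields a quantitatively stronger conclusion (an explicit exponential lower bound on the cardinality in every large shell), while the paper's softer series argument needs only divergence at the exponent $\frac12$ and no control of the additive constant in the distance asymptotics, so it would transfer to settings where only the divergence type of the subgroup is known. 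Your counting steps (distinctness of the $p^{r}(0)$, the interval $[A,\kappa A]$ with $\kappa=e^{1/2}$, uniformity of the $o(1)$ error for large $r$) are all sound, so there is no gap.
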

\begin{proof}
	
Put $ s=\frac{1}{2}-\varepsilon $. For every positive integer $ n $,  $ <p^{n}> $ is a subgroup of $ \Gamma_{E} $ and that
	\begin{equation}\label{eqlem7}
		\sum_{\gamma\in\Gamma_{E}}e^{-(s+\varepsilon)d(0,\gamma.0)}=+\infty.
	\end{equation}
	If there was existed a constant $ A>0 $ such that
	$$ \sum_{\gamma\in\Gamma_{E},\gamma.0\in A_{n}}e^{-sd(0,\gamma.0)}\leqslant A $$ then we have
	$$ \sum_{\gamma\in\Gamma_{E}}e^{-(s+\varepsilon)d(0,\gamma.0)} =\sum_{n\geqslant0}\sum_{\gamma\in\Gamma_{E},\gamma.0\in A_{n}}e^{-(s+\varepsilon)d(0,\gamma.0)}$$
	$$=\sum_{n\geqslant0}\Big[\Big(\sum_{\gamma\in\Gamma_{E},\gamma.0\in A_{n}}e^{-sd(0,\gamma.0)}\Big)
	\Big(\sum_{\gamma\in\Gamma_{E},\gamma.0\in A_{n}}e^{-\varepsilon d(0,\gamma.0)}\Big)\Big]
	\leqslant A\sum_{n\geqslant0}e^{-\varepsilon n}<\infty$$ which is in contradiction with the equation \ref{eqlem7}. Then 
	$$ \limsup_{n\rightarrow+\infty}\sum_{\gamma\in\Gamma_{E},\gamma.0\in A_{n}}e^{-sd(0,\gamma.0)}=+\infty. $$ Hence
	$$ \limsup_{n\rightarrow+\infty}e^{-n(\frac{1}{2}-\varepsilon)}card\{\gamma\in\Gamma_{E}\;/\;
	\gamma.0\in A_{n}\}=\infty.  $$
\end{proof}

\textbf{\underline{Step 2: Graph and tree}}\\\\
For every $ \xi=s_{1}...s_{i}...=c_{1}...c_{l}...\in\Lambda_{ur}^{\infty,+} $ we set the following definitions.
\begin{defn}
\begin{enumerate}
\item \textbf{Vertex:} For every $ n\geq0 $, the point 
$ \gamma_{n}^{\xi}(0)=c_{1}...c_{n}(0)\in\mathbb{D} $ where $ c_{i}\in\mathcal{A}_{2}^{+} $ is called \textbf{a vertex at level $ n $ relative to $ \xi $}.
\item \textbf{Edge:} The geodesic segment $ \Big[\gamma_{n-1}^{\xi}(0),\gamma_{n}^{\xi}(0)\Big] $ is called \textbf{an edge relative to $ \xi $}.
\item \textbf{Path:} The infinite union 
$ \varUpsilon^{\xi}:=\bigcup_{n\geq1}\Big[\gamma_{n-1}^{\xi}(0),\gamma_{n}^{\xi}(0)\Big] $ where 
$ \gamma_{0}^{\xi}(0)=0\in\mathbb{D} $ is called \textbf{a path relative to $ \xi\in\Lambda_{ur}^{\infty,+} $}.
\item \textbf{Path crossing :} Let $ \xi $ and $ \xi' $ be two distinct points of $ \Lambda_{ur}^{\infty,+} $. The paths $ \varUpsilon^{\xi} $ and $ \varUpsilon^{\xi'} $ intersect if there exists a positive integer $ n>0 $ such that the vertices $ \gamma_{n}^{\xi}(0) $ and $ \gamma_{n}^{\xi'}(0) $ coincide. Thus intersection vertices is called a \textbf{path-crossing}.
\end{enumerate}
\end{defn}
The union $ \mathcal{G}=\bigcup_{\xi\in\Lambda_{ur}^{\infty,+}}\varUpsilon^{\xi} $ is by construction a graph which vertices are the points $ \Big(\gamma_{n}^{\xi}(0)\Big)_{n\geq0,\;\xi\in\Lambda_{ur}^{\infty,+}} $, see Figure \ref{graph}.

%\begin{figure}[htbp]
%\begin{center}
%\includegraphics[scale=0.35]{graph.pdf}\hspace{1cm}
%\caption{The graph $ \mathcal{G} $}\label{graph}
%\label{canal}
%\end{center}
%\end{figure}

\begin{figure}[htbp]
	\centering
	\begin{normalsize} 
		\def\svgwidth{10cm}
		%\def\svgscale{0.5}
		%% Creator: Inkscape 1.3.2 (091e20e, 2023-11-25, custom), www.inkscape.org
%% PDF/EPS/PS + LaTeX output extension by Johan Engelen, 2010
%% Accompanies image file 'dibujo6.pdf' (pdf, eps, ps)
%%
%% To include the image in your LaTeX document, write
%%   \input{<filename>.pdf_tex}
%%  instead of
%%   \includegraphics{<filename>.pdf}
%% To scale the image, write
%%   \def\svgwidth{<desired width>}
%%   \input{<filename>.pdf_tex}
%%  instead of
%%   \includegraphics[width=<desired width>]{<filename>.pdf}
%%
%% Images with a different path to the parent latex file can
%% be accessed with the `import' package (which may need to be
%% installed) using
%%   \usepackage{import}
%% in the preamble, and then including the image with
%%   \import{<path to file>}{<filename>.pdf_tex}
%% Alternatively, one can specify
%%   \graphicspath{{<path to file>/}}
%% 
%% For more information, please see info/svg-inkscape on CTAN:
%%   http://tug.ctan.org/tex-archive/info/svg-inkscape
%%
\begingroup%
  \makeatletter%
  \providecommand\color[2][]{%
    \errmessage{(Inkscape) Color is used for the text in Inkscape, but the package 'color.sty' is not loaded}%
    \renewcommand\color[2][]{}%
  }%
  \providecommand\transparent[1]{%
    \errmessage{(Inkscape) Transparency is used (non-zero) for the text in Inkscape, but the package 'transparent.sty' is not loaded}%
    \renewcommand\transparent[1]{}%
  }%
  \providecommand\rotatebox[2]{#2}%
  \newcommand*\fsize{\dimexpr\f@size pt\relax}%
  \newcommand*\lineheight[1]{\fontsize{\fsize}{#1\fsize}\selectfont}%
  \ifx\svgwidth\undefined%
    \setlength{\unitlength}{1995bp}%
    \ifx\svgscale\undefined%
      \relax%
    \else%
      \setlength{\unitlength}{\unitlength * \real{\svgscale}}%
    \fi%
  \else%
    \setlength{\unitlength}{\svgwidth}%
  \fi%
  \global\let\svgwidth\undefined%
  \global\let\svgscale\undefined%
  \makeatother%
  \begin{picture}(1,0.79548872)%
    \lineheight{1}%
    \setlength\tabcolsep{0pt}%
    \put(0,0){\includegraphics[width=\unitlength,page=1]{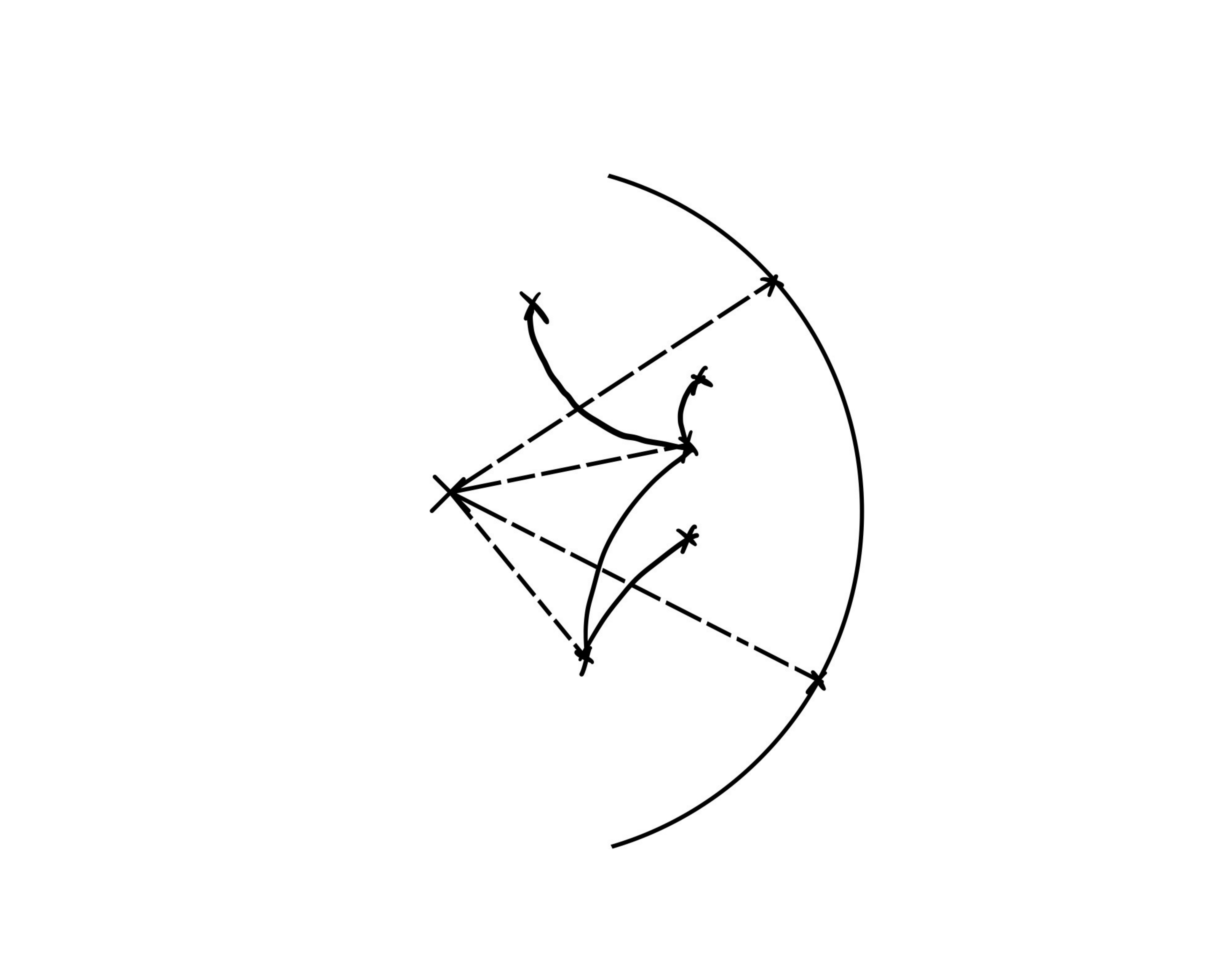}}%
    \put(0.30424295,0.39532205){\color[rgb]{0.05490196,0.05490196,0.05490196}\makebox(0,0)[lt]{\lineheight{1.25}\smash{\begin{tabular}[t]{l}$0$\end{tabular}}}}%
    \put(0.65014974,0.5726356){\color[rgb]{0.05490196,0.05490196,0.05490196}\makebox(0,0)[lt]{\lineheight{1.25}\smash{\begin{tabular}[t]{l}$\xi$\end{tabular}}}}%
    \put(0.68793785,0.23835596){\color[rgb]{0.05490196,0.05490196,0.05490196}\makebox(0,0)[lt]{\lineheight{1.25}\smash{\begin{tabular}[t]{l}$\xi'$\end{tabular}}}}%
    \put(0.58329381,0.42245199){\color[rgb]{0.05490196,0.05490196,0.05490196}\makebox(0,0)[lt]{\lineheight{1.25}\smash{\begin{tabular}[t]{l}$\gamma_n^\xi(0)$\end{tabular}}}}%
    \put(0.49027682,0.22091528){\color[rgb]{0.05490196,0.05490196,0.05490196}\makebox(0,0)[lt]{\lineheight{1.25}\smash{\begin{tabular}[t]{l}$\gamma_n^{\xi'}(0)$\end{tabular}}}}%
  \end{picture}%
\endgroup%

	\end{normalsize}
	\caption{The graph $\mathcal{G}$} 
	\label{graph}
\end{figure}

By construction, the boundary at infinity of the graph $ \mathcal{G} $ (its accumulation points) is exactly the set $ \Lambda_{ur}^{\infty,+} $.\\
All paths of $ \mathcal{G} $ go from $ \gamma_{0}^{\xi}(0)=0\in\mathbb{D} $, for every $\xi\in\Lambda_{ur}^{\infty,+} $ . So the point $ 0\in\mathbb{D} $ is the \textit{root} of the graph $ \mathcal{G} $.\\
On $ \mathcal{G} $ each vertex $ z $ has one or many finite "parents" and many "children". We denote
$ \mathcal{G}(z)$, the set of children of a vertex $ z $. And the set of vertices at level $ n $, denoted $ \mathcal{G}_{n} $ is the set of points $ \gamma(0) $ such that $ \gamma\in\Gamma_{E} $ and 
$ l(\gamma)=n $ with respect the sub-alphabet $ \mathcal{A}_{2}^{+} $.
\begin{defn}(Subpaths)
\begin{enumerate}
\item \textbf{Previous subpath :} The union 
$ \varUpsilon_{-}^{\xi}:=\bigcup_{k=1}^{n}\Big[\gamma_{k-1}^{\xi}(0),\gamma_{k}^{\xi}(0)\Big] $ is called the \textbf{previous subpath} of the vertex $ \gamma_{n}^{\xi}(0) $ relative to $ \xi $.
\item \textbf{Coming subpath :} The union 
$ \varUpsilon_{+}^{\xi}:=\bigcup_{k\geq n+1}\Big[\gamma_{k-1}^{\xi}(0),\gamma_{k}^{\xi}(0)\Big]$ is called the \textbf{coming subpath} of the vertex $ \gamma_{n}^{\xi}(0) $ relative to $ \xi $.
\end{enumerate}
\end{defn}

Now we are ready to draw a tree $ \mathcal{T} $ from the graph $ \mathcal{G} $ in the following way: At every path-crossing we keep only one previous subpath and the rest of the previous branches of the corresponding vertex are deleted, see Figure \ref{tree}. In addition, we will always choose the branch which contains the highest number of vertex.
%\textcolor{red}{ maybe a figure where we show what it is happenning}.
So every vertex of $\mathcal G$ now has a single parent. Hence the graph $ \mathcal{G} $ becomes a tree $ \mathcal{T} $ which boundary at infinity is $ E\subseteq\Lambda_{ur}^{\infty,+}\subset\Lambda_{ur}^{\infty} $. We design respectively
$ \mathcal{T}(z) $:= the set of "children" belonging to $ \mathcal{T} $ of a vertex $ z $ and $ \mathcal{T}_{n} $:= the set of vertices at level $ n $ belonging to $ \mathcal{T} $. We have $ \mathcal{T}(z)\subset\mathcal{G}(z) $ and $ \mathcal{T}_{n}\subset\mathcal{G}_{n} $.\\

%\begin{figure}[htbp]
%	\begin{center}
%		\includegraphics[scale=0.35]{tree.pdf}\hspace{1cm}
%		\caption{The tree $ \mathcal{T} $}\label{tree}
%		%\label{canal}
%	\end{center}
%\end{figure}

\begin{figure}[htbp]
	\centering
	\begin{normalsize} 
		\def\svgwidth{\textwidth}
		%\def\svgwidth{20cm}
		%\def\svgscale{0.5}
		%% Creator: Inkscape 1.3.2 (091e20e, 2023-11-25, custom), www.inkscape.org
%% PDF/EPS/PS + LaTeX output extension by Johan Engelen, 2010
%% Accompanies image file 'dibujo7.pdf' (pdf, eps, ps)
%%
%% To include the image in your LaTeX document, write
%%   \input{<filename>.pdf_tex}
%%  instead of
%%   \includegraphics{<filename>.pdf}
%% To scale the image, write
%%   \def\svgwidth{<desired width>}
%%   \input{<filename>.pdf_tex}
%%  instead of
%%   \includegraphics[width=<desired width>]{<filename>.pdf}
%%
%% Images with a different path to the parent latex file can
%% be accessed with the `import' package (which may need to be
%% installed) using
%%   \usepackage{import}
%% in the preamble, and then including the image with
%%   \import{<path to file>}{<filename>.pdf_tex}
%% Alternatively, one can specify
%%   \graphicspath{{<path to file>/}}
%% 
%% For more information, please see info/svg-inkscape on CTAN:
%%   http://tug.ctan.org/tex-archive/info/svg-inkscape
%%
\begingroup%
  \makeatletter%
  \providecommand\color[2][]{%
    \errmessage{(Inkscape) Color is used for the text in Inkscape, but the package 'color.sty' is not loaded}%
    \renewcommand\color[2][]{}%
  }%
  \providecommand\transparent[1]{%
    \errmessage{(Inkscape) Transparency is used (non-zero) for the text in Inkscape, but the package 'transparent.sty' is not loaded}%
    \renewcommand\transparent[1]{}%
  }%
  \providecommand\rotatebox[2]{#2}%
  \newcommand*\fsize{\dimexpr\f@size pt\relax}%
  \newcommand*\lineheight[1]{\fontsize{\fsize}{#1\fsize}\selectfont}%
  \ifx\svgwidth\undefined%
    \setlength{\unitlength}{1440bp}%
    \ifx\svgscale\undefined%
      \relax%
    \else%
      \setlength{\unitlength}{\unitlength * \real{\svgscale}}%
    \fi%
  \else%
    \setlength{\unitlength}{\svgwidth}%
  \fi%
  \global\let\svgwidth\undefined%
  \global\let\svgscale\undefined%
  \makeatother%
  \begin{picture}(1,0.51770833)%
    \lineheight{1}%
    \setlength\tabcolsep{0pt}%
    \put(0,0){\includegraphics[width=\unitlength,page=1]{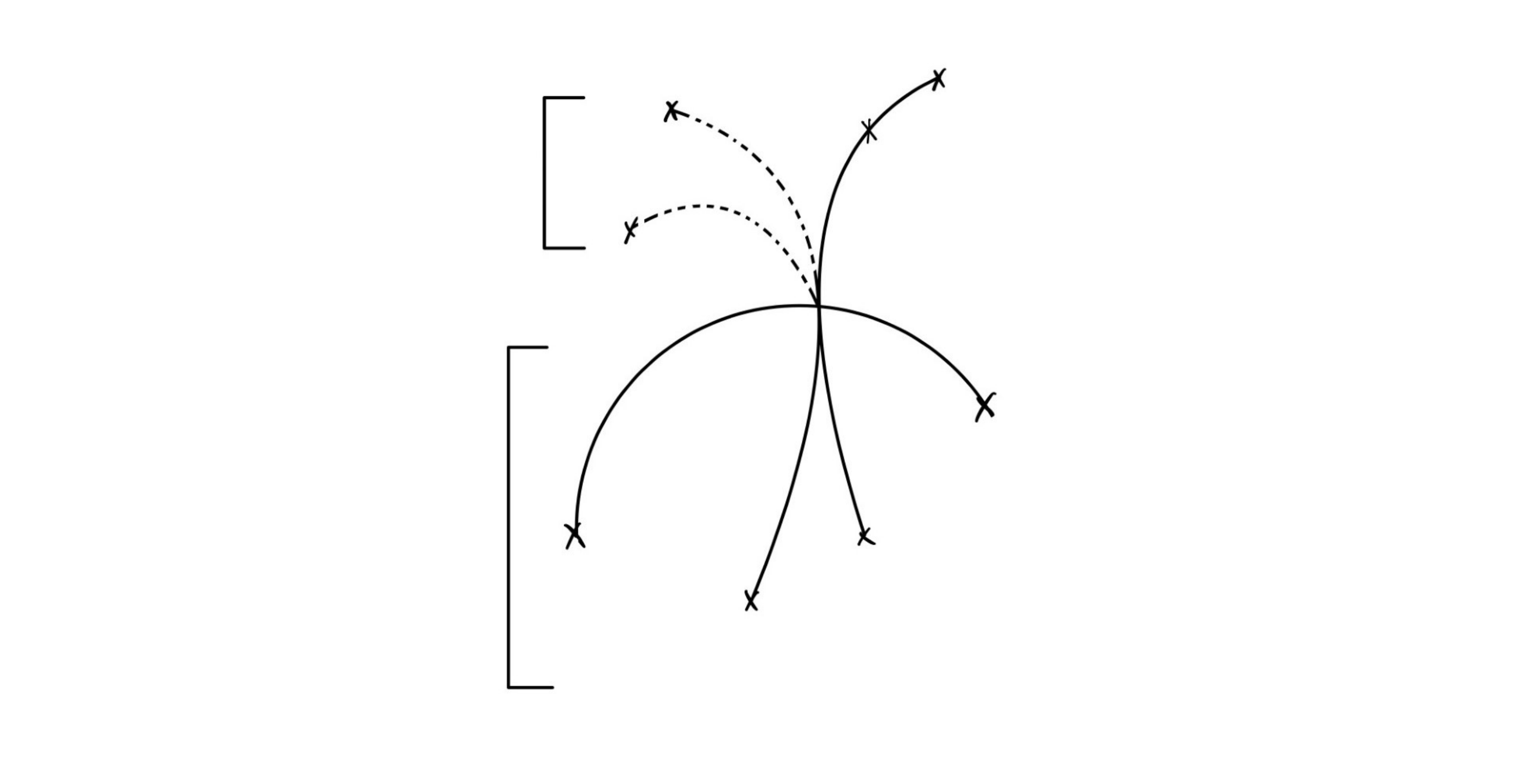}}%
    \put(0.2467344,0.41248337){\color[rgb]{0.05490196,0.05490196,0.05490196}\makebox(0,0)[lt]{\lineheight{1.25}\smash{\begin{tabular}[t]{l}Deleted\\parents\end{tabular}}}}%
    \put(0.21770682,0.20348482){\color[rgb]{0.05490196,0.05490196,0.05490196}\makebox(0,0)[lt]{\lineheight{1.25}\smash{\begin{tabular}[t]{l}Children\end{tabular}}}}%
    \put(0.56023223,0.3283034){\color[rgb]{0.05490196,0.05490196,0.05490196}\makebox(0,0)[lt]{\lineheight{1.25}\smash{\begin{tabular}[t]{l}$\gamma_{n}^{\xi}(0)$\end{tabular}}}}%
  \end{picture}%
\endgroup%

	\end{normalsize}
	\caption{The tree $ \mathcal{T} $}
	\label{tree}
\end{figure}

\textbf{\underline{Step 3: A measure on $ E $}}\\\\
We follow steps by steps the construction of Bishop-Jones in \cite{bijo}. Let $ z \in A_n\cap \Gamma_E.0,$  and in order to simplify  notation we put $ B_{z}=B(\xi_{0,z},C_{3}e^{-d(0,z)}) $. Define $ E_{n}:=\bigcup_{z\in\mathcal{T}_{n}}B_{z} $. Thus $ E=\bigcap_{n\geq0}E_{n} $. We define a probability measure $ \mu $ supported on $ E $ by setting $ \mu(E_{0})=1 $ and for $ z\in\mathcal{T} $ and $ z'\in\mathcal{T}(z) $ set $$ \mu(B_{z'})=
\frac{e^{-(\frac{1}{2}-\varepsilon)d(0,z')}}{\sum_{w\in\mathcal{T}(z)}
e^{-(\frac{1}{2}-\varepsilon)d(0,w)}}\mu(B_{z}). $$
For every $ z\in\mathcal{T} $, by the Lemma \ref{diverge} and induction we have 
$$ \mu(B_{z})\leqslant C_{3}e^{-(\frac{1}{2}-\varepsilon)d(0,z)}. $$
From Lemma  \ref{disjoint}, if $ z $ and $ z' $ are distinct points on $ \mathcal{T}_{n} $ then $ B_{z}\cap B_{z'}=\emptyset $.\\
Now let $ B $ be a ball of $ \mathbb{S}^{1} $. If $ B\cap E=\emptyset $, then $ \mu(B)=0 $ since the measure $ \mu $ is supported on $ E $. If there exists $ \eta\in B\cap E $ then there exists a sequence $ (z_{n}) $ of elements of the tree $ \mathcal{T} $ which tends to $ \eta $. Design $ B_{z_{0}} $ the lowest generation disc in our construction such that $ B_{z_{0}}\cap B\neq\emptyset $ but $ B\nsubseteq2B_{z_{0}} $. Let $ B_{z_{1}} $ be the "parent" of $ B_{z_{0}} $. By the maximality of $ B_{z_{0}} $ we have $ B\subset2B_{z_{1}} $. Since $ 2B_{z_{1}} $ is disjoint from any other balls of the same generation
$$ \mu(B)\leqslant\mu(B_{z_{1}})\leqslant
C_{5}e^{-(\frac{1}{2}-\varepsilon)d(0,z_{1})}. $$
To continuous let us establish the following lemma.
\begin{lem}
There exists a constant $ C_{6} $ such that for every vertex $ z\in\mathcal{T} $ and every $ z'\in\mathcal{T}(z) $ we have $ r_{z}\leqslant C_{6}r_{z'} $.
\end{lem}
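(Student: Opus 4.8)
The plan is to convert the stated bound between radii into a bound between distances to the root $0$, and then to observe that in the tree $\mathcal{T}$, passing from a vertex to one of its children increases the distance to $0$ by only a bounded amount.

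First I would recall that the ball attached to a vertex $z$ is $B_z=B(\xi_{0,z},C_3 e^{-d(0,z)})$, so its radius is $r_z=C_3 e^{-d(0,z)}$. Hence, for a vertex $z$ and a child $z'\in\mathcal{T}(z)$,
\[
\frac{r_z}{r_{z'}}=e^{\,d(0,z')-d(0,z)},
\]
and the assertion $r_z\le C_6 r_{z'}$ is exactly equivalent to the uniform estimate $d(0,z')-d(0,z)\le\ln C_6$. So the whole problem reduces to bounding $d(0,z')-d(0,z)$ independently of the parent--child pair.

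Next I would use the way the generations of the tree were organised for the construction of the measure: a vertex $z$ of generation $n$ lies in the annulus $A_n=\{z\in\mathbb{D}:n\le d(0,z)<n+1\}$ (this is what makes the shells $E_n=\bigcup_{z\in\mathcal{T}_n}B_z$ and the disjointness statement of Lemma \ref{disjoint}, which is formulated inside one $A_n$, meaningful), and each child $z'\in\mathcal{T}(z)$ sits in the next shell $A_{n+1}$. Then $n\le d(0,z)<n+1$ and $n+1\le d(0,z')<n+2$, so $0<d(0,z')-d(0,z)<2$ for every parent--child pair; therefore $r_z/r_{z'}=e^{\,d(0,z')-d(0,z)}<e^{2}$ and the lemma holds with $C_6=e^{2}$ (any $C_6\ge e^{2}$ works).

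The point that needs care — and which I expect to be the main obstacle — is precisely the claim that a child lies one annulus farther out than its parent: a single combinatorial edge $\bigl[\gamma_k(0),\gamma_{k+1}(0)\bigr]$ of a path has hyperbolic length $d(0,c_{k+1}(0))$ with $c_{k+1}\in\mathcal{A}_2^{+}$, which is not bounded, so when such an edge crosses several annuli at once the vertices of consecutive generations must be selected (as orbit points of $\Gamma_E$ falling into neighbouring shells) so that the generation of a vertex really equals the index of the annulus $A_n$ containing it. Once this bookkeeping from Step 2--3 is granted, the estimate above is immediate; and if one prefers to argue with genuine shadows $\mathcal{O}(z,r)$ rather than the $d_0$-balls $B_z$, Lemma \ref{kaim} supplies the comparison $\mathcal{O}(z,r)\asymp B(\xi_{0,z},e^{-d(0,z)})$ up to a multiplicative constant, which only affects the value of $C_6$.
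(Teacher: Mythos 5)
Your proposal is correct and takes essentially the same route as the paper: both reduce the ratio $r_{z}/r_{z'}$ to the gap $d(0,z')-d(0,z)$ and bound that gap by the fact that a parent and its child lie in consecutive annuli $A_{n}$, $A_{n+1}$, yielding a uniform constant ($e^{2}$ in your normalization). Your phrasing is in fact the more careful one — the paper writes the ratio as $e^{d(z,z')}$ and appeals to a bounded ``diameter'' of $A_{n}\cup A_{n+1}$, which is only correct when read as the bound $d(0,z')-d(0,z)<2$ that you use — and the bookkeeping you flag (that tree generations are aligned with the annulus indices despite edges of unbounded length) is exactly the point the paper also takes for granted from its Step 2--3 construction.
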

\begin{proof}
We have $ \frac{r(B_{z})}{r(B_{z'})}=e^{d(z,z')} $. The points $ z $ and $ z' $ belong on two consecutive annulus $ A_{n}\cup A_{n+1} $, for some integer $ n $ or there exists $ d>0 $ such that for every positive integer $ n $, $ diam(A_{n}\cup A_{n+1})\leqslant d $. So $ \frac{r_{z}}{r_{z'}}\leqslant e^{d} $. Hence it is enough to take $ C_{6}=e^{d} $.	
\end{proof}
From the above lemma and the fact $ B\nsubseteq2B_{z_{0}} $ we have
$$ \mu(B)\leqslant C_{5}(2C_{6})^{\frac{1}{2}-\varepsilon}e^{-(\frac{1}{2}-\varepsilon)d(0,z_{0})}
\leqslant C_{5}(2C_{6})^{\frac{1}{2}-\varepsilon}\times r^{-(\frac{1}{2}-\varepsilon)d(0,z)} $$
where $ r=r(B) $. So from Frostman's Lemma for every $ \varepsilon>0 $, 
$$ \frac{1}{2}-\varepsilon\leqslant dim_{H}(E)\leqslant dim_{H}(\Lambda_{ur}^{\infty,+})\leqslant dim_{H}(\Lambda_{ur}^{\infty}). $$ 
And therefore 
\begin{equation}\label{lower}
dim_{H}(\Lambda_{ur}^{\infty})\geqslant\frac{1}{2}. 
\end{equation}	
Finally the inequalities \ref{upper} and \ref{lower} imply that 
$$ dim_{H}(\Lambda_{ur}^{\infty})=\frac{1}{2}. $$
	
Hence since $ \Lambda_{\infty}=\Lambda_{ur}^{\infty}\sqcup\Lambda_{p} $ and $ dim_{H}(\Lambda_{p})=0 $ we have $$ dim_{H}(\Lambda_{\infty})=\frac{1}{2}. $$

\end{document}